\documentclass[11pt]{amsart}

\usepackage{lmodern}

\usepackage[font=small,labelfont=bf]{caption}
 \usepackage{amsmath, amsthm, amssymb, amsfonts}
\usepackage[utf8]{inputenc}
\usepackage[normalem]{ulem}
\usepackage[colorlinks = true,
            linkcolor = black,
            urlcolor  = blue,
            citecolor = red,
            anchorcolor = red]{hyperref}
\usepackage{url}
\usepackage{comment}
\usepackage{mathrsfs}  
\usepackage{longtable} 
\usepackage{enumitem}
\usepackage{relsize}
\usepackage{bm}
\usepackage{xcolor}
\usepackage{graphicx}
\usepackage{mathabx}
\usepackage{mathdots}
\usepackage{mathtools}
\usepackage{color}
\usepackage{tikz-cd,tikz}
\usepackage{braket}
\usepackage{stmaryrd}
\usepackage{amssymb}
\usepackage{tipa}
\usepackage{appendix}
\usepackage{physics}

\usepackage{appendix}

\usepackage{tikz}
\usetikzlibrary{decorations.pathmorphing, arrows}
\usepackage{tikz-cd}

\hypersetup{
    colorlinks=true,
    linkcolor=black,
    filecolor=magenta,
    urlcolor=cyan,
    citecolor=magenta,
    pdftitle={Rationality stable pairs},
    pdfpagemode=FullScreen,
}

\tikzset{snake it/.style={decorate, decoration=snake}}

\theoremstyle{definition}
\newtheorem{definition}{Definition}[section]

\newtheorem{theorem}[definition]{Theorem}
\newtheorem{example}[definition]{Example}
\newtheorem{proposition}[definition]{Proposition}

\newtheorem{corollary}[definition]{Corollary}

\newtheorem{lemma}[definition]{Lemma}
\newtheorem{conjecture}[definition]{Conjecture}

\newtheorem{question}[definition]{Question}
\newtheorem{remark}[definition]{Remark}

\newtheorem{thm}{Theorem}

\newcommand{\pt}{\mathsf{pt}}

\newcommand{\BA}{\mathbb{A}}

\newcommand{\BC}{\mathbb{C}}
\newcommand{\BD}{\mathbb{D}}

\newcommand{\BG}{\mathbb{G}}

\newcommand{\BK}{\mathbb{K}}

\newcommand{\BP}{\mathbb{P}}
\newcommand{\BQ}{\mathbb{Q}}
\newcommand{\BR}{\mathbb{R}}

\newcommand{\BZ}{\mathbb{Z}}

\newcommand{\CA}{\mathcal A}
\newcommand{\CB}{\mathcal B}

\newcommand{\CE}{\mathcal E}
\newcommand{\CF}{\mathcal F}

\newcommand{\CI}{\mathcal I}

\newcommand{\CO}{\mathcal O}

\newcommand{\CS}{\mathcal S}

\newcommand{\CX}{\mathcal X}

\newcommand{\FM}{\mathfrak{M}}

\newcommand{\FX}{\mathfrak{X}}

\newcommand{\bL}{\mathsf{L}}
\newcommand{\bR}{\mathsf{R}}

\newcommand{\sM}{\mathsf{M}}
\newcommand{\sL}{\mathsf{L}}

\DeclareMathOperator{\id}{id}

\DeclareMathOperator{\Spec}{Spec}
\DeclareMathOperator{\Pic}{Pic}

\DeclareMathOperator{\Hom}{Hom}
\DeclareMathOperator{\Ext}{Ext}

\newcommand{\GW}{\mathsf{GW}}
\newcommand{\PT}{\mathsf{PT}}
\newcommand{\DT}{\mathsf{DT}}
\newcommand{\GV}{\mathsf{GV}}
\newcommand{\vir}{\mathrm{vir}}

\newcommand{\ch}{\mathrm{ch}}
\newcommand{\td}{\mathrm{td}}

\newcommand{\Coh}{\mathrm{Coh}}

\newcommand{\sym}{\text{sym}}

\newcommand{\rk}{\mathrm{rk}}

\newcommand{\rig}{\mathrm{rig}}
\newcommand{\inv}{{\mathrm{wt}_0}}

\newcommand{\VX}{\mathbf{V}_X}
\newcommand{\bV}{\mathbf{V}}

\newcommand{\VXhat}{\widecheck{\mathbf{V}}_X}

\newcommand{\bx}{{\bf x}}
\newcommand{\bn}{{\bf n}}
\newcommand{\bt}{{\bf t}}
\newcommand{\omord}{\omega_{\textup{ord}}}
\newcommand{\FL}{\mathfrak{L}}
\newcommand{\Filt}{\mathrm{Filt}}

\newcommand{\Frac}{\mathrm{Frac}}

\newcommand{\lpp}{(\hspace{-0.15em}(}
\newcommand{\rpp}{)\hspace{-0.15em})}

\begin{document}

\title[Rationality of PT generating series]{Rationality and symmetry of stable pairs generating series of Fano 3-folds}
\date{\today}

\newcommand\blfootnote[1]{%
  \begingroup
  \renewcommand\thefootnote{}\footnote{#1}%
  \addtocounter{footnote}{-1}%
  \endgroup
}

\date{\today}

\author[I. Karpov]{Ivan Karpov}
\address{Massachusetts Institute of Technology, Department of Mathematics}
\email{karpov57@mit.edu}

\author[M. Moreira]{Miguel Moreira}
\address{Massachusetts Institute of Technology, Department of Mathematics}
\email{miguel@mit.edu}


\maketitle

\setcounter{tocdepth}{1}

\begin{abstract}
The generating series of descendent invariants of stable pairs on 3-folds is conjectured to be rational and to satisfy a $q\leftrightarrow q^{-1}$ symmetry. We prove this conjecture for Fano 3-folds. We utilize the same path of stability conditions that Toda used in his proof of the Calabi--Yau version of the conjecture, relating stable pairs and $L$ invariants, and work of the two authors that allows an extension of Joyce's descendent wall-crossing formula to non-standard hearts of $D^b(X)$. We use Ehrhart theory to deal with the combinatorics coming out of the wall-crossing formula. Furthermore, we specialize the wall-crossing formula to primary insertions and prove a strong rationality result predicted by the Pandharipande--Thomas/Gopakumar--Vafa correspondence.
\end{abstract}

\tableofcontents

\section{Introduction}

Stable pairs, introduced by Pandharipande and Thomas in \cite{PT}, are one of the various curve counting theories on 3-folds. A stable pair is a pure 1-dimensional sheaf $F$ together with a section $\CO_X\to F$ whose cokernel is 0-dimensional.

Let $X$ be a smooth projective 3-fold and let $P_{\beta, m}=P_{\beta, m}(X)$ be the moduli space of stable pairs $[\CO_X\to F]$ with $\ch_2(F)=\beta$ and $\ch_3(F)=m[\pt]$. Here, $m$ is related to the Euler characteristic of $F$ by 
\[m=\chi-\frac{1}{2}\beta\cdot c_1(X)\in \frac{1}{2}\BZ\,.\]
The moduli spaces $P_{\beta, m}$ admit a virtual fundamental class 
\[[P_{\beta, m}]^\vir\in H_{2d_\beta}(P_{\beta, m})\]
with virtual dimension
\begin{equation}
    d_\beta=\int_X\beta\cdot c_1(X)\in \BZ\,.\label{eq: dbeta}
\end{equation}
The virtual fundamental class can be used to define invariants, often referred to as \textit{stable pair invariants} or \textit{Pandharipande--Thomas} ($\PT$) \textit{invariants}. When $X$ is Calabi--Yau, the virtual dimension $d_\beta$ is equal to 0, in which case the $\PT$ invariants are obtained by taking the degree of the virtual fundamental class. When that is not the case, one obtains invariants by integrating tautological classes:
\begin{equation}\label{eq: ptinvariants}
    \int_{[P_{\beta, m}]^\vir}\ch_{k_1}(\gamma_1)\ldots\ch_{k_n}(\gamma_n)\in \BQ\,.
\end{equation}
Here, 
\[\ch_k(\gamma)=p_\ast\big(\ch_k(\mathcal I)q^\ast \gamma\big)\in H^\ast(P_{\beta, m})\]
where $\mathcal I=[\CO_{P_{\beta, m}\times X}\to \CF]$ is the universal stable pair, which is a perfect complex in $P_{\beta, m}\times X$, $\gamma\in H^\ast(X)$ and $p,q$ are the projections of $P_{\beta, m}\times X$ onto $P_{\beta, m}$ and $X$, respectively. 

Given a fixed curve class $\beta$ and a formal combination of tautological classes
\begin{equation}
    \label{eq: D}
D=\ch_{k_1}(\gamma_1)\ldots\ch_{k_n}(\gamma_n)\,,\end{equation}
the $\PT$ generating series is the formal Laurent series\footnote{It is more common in the literature to define the generating function with a sum over $q^\chi$, where $\chi$ is the Euler characteristic, instead of $q^m$. These different conventions just change the generating series by a factor of $q^{d_\beta/2}$. Our convention removes the factor of $q^{-d_\beta}$ from the functional equation in \cite[Conjecture 4]{survey}.}
\[Z_\beta^\PT(q|D)\coloneqq \sum_{m}q^m\int_{[P_{\beta, m}]^\vir}D\in q^{d_\beta/2}\BQ\lpp q\rpp \,.\]

Stable pairs were introduced as a better version of the moduli space of ideal sheaves and the invariants associated to it, often referred to as Donaldson--Thomas ($\DT$) invariants. It was conjectured in \cite{mnop1} that a certain normalization of the DT generating series was rational (in the Calabi--Yau case) and matched the Gromov--Witten ($\GW$) generating series after a change of variables. Ideal sheaves can heuristically be thought of as a curve together with isolated points in $X$, and the normalization of the DT generating series is meant to formally ``throw away'' the contribution of these isolated points. On the other hand, a stable pair corresponds roughly to a curve together with points \textit{on the curve}, so in some sense stable pairs are geometrically ``throwing away'' the contributions of isolated points. Pandharipande--Thomas conjectured that the $\PT$ generating series matches the normalized $\DT$ generating series, and in particular that it is rational and satisfies a certain functional equation:\footnote{The matching of normalized $\DT$ and $\PT$ is only true on the nose in the Calabi--Yau case, in general a more complicated relation involving non-rational functions is expected \cite[Conjecture 5.2.1]{oblomkovDTPT} and \cite[Section 4.2]{OOP}. The $\PT$ generating series are still expected to be rational, while the normalized DT series are not.}

\begin{conjecture}\label{conj: rationality}
    Let $X$ be a smooth projective $3$-fold, $\beta$ an effective curve class and $D$ as in \eqref{eq: D}. Then $Z_\beta^\PT(q|D)$ is the Laurent expansion of a rational function\footnote{More precisely, the expansion of $q^{d_\beta/2}f(q)$ where $f$ is a rational function in $q$.} which satisfies the functional equation
\[Z_\beta^\PT(q^{-1}|D)=(-1)^{k_1+\ldots+k_n}Z^\PT_\beta(q |D)\,.\]
\end{conjecture}

There are several extensions of the conjecture: an equivariant version when $X$ is acted on by a torus, a version for log pairs $(X, \partial X)$, and a version for families of 3-folds $\CX\to B$; see \cite{survey, pandharipandefamilies} for two surveys on the subject. 

Conjecture \ref{conj: rationality} has been proven in many cases. When $X$ is a Calabi--Yau 3-fold -- in which case only $D=1$ is interesting -- it is known by work of Toda \cite{todaPTrationality} and Bridgeland \cite{bridgelandDTPT}. Pandharipande and Pixton proved the rationality part of the conjecture, and some cases of the functional equation part, for several geometries: local curves in \cite{PPlocalcurves}, toric 3-folds in \cite{PPtoric} and Fano complete intersections in products of projective spaces (with insertions $\gamma_i$ pulled back from the ambient space) in \cite{PPquintic}. Work of Pardon \cite{pardon} showed that the conjecture holds when $X$ is Fano for primary insertions (i.e. $k_i=2$); Pardon explains that, in some sense, the curve counting theories for Calabi--Yau 3-folds or for Fano 3-folds with primary insertions are totally controlled by what happens for local curves, but this is no longer the case with non-primary insertions. Maulik and Ranganathan \cite{MRgwpttoric} have recently proven rationality for toric log geometries and primary insertions. There is also a virtual cobordism version \cite{shencobordism} which, as explained in \cite[Section 4]{shencobordism} and \cite[Section 4]{survey}, is a formal consequence of Conjecture~\ref{conj: rationality}. A surface version of this conjecture, with insertions involving the Chern classes of the virtual tangent bundle, is proposed in \cite{pavlakovic}.

The main result of this paper is the proof of Conjecture \ref{conj: rationality} for Fano 3-folds and, more generally, for  ``strongly positive'' curve classes. Recall that if $\beta', \beta$ are two curve classes we say that $\beta'\leq \beta$ if $\beta-\beta'$ is effective.
\begin{thm}\label{thm: main}
Let $X$ be a 3-fold such that $h^{p,0}(X)=0$ for $p>0$ and let $\beta$ be a curve class such that $-\beta'\cdot K_X>0$ for any effective curve class $\beta'\leq \beta$. Then Conjecture \ref{conj: rationality} holds. 
\end{thm}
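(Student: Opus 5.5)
Following the strategy announced in the abstract, I will run Toda's path of weak stability conditions on the category $\mathcal{A}_X=\langle \CO_X[1], \Coh_{\le 1}(X)\rangle_{\mathrm{ex}}$ (inside a tilted heart of $D^b(X)$). This path interpolates between stable pairs at one end and ``$L$-invariants'' at the other. The hypothesis $h^{p,0}(X)=0$ ensures that $\CO_X[1]$ is rigid: there are no deformations of the line bundle and no Picard-variety directions. Consequently the pair moduli spaces are genuine moduli of objects in the heart with rank $-1$, and Joyce's vertex-algebra framework applies. Using the authors' earlier extension of Joyce's descendent wall-crossing formula to non-standard hearts, I will express $\int_{[P_{\beta,m}]^\vir}D$ as a finite sum over splittings $\beta=\beta_0+\beta_1+\dots+\beta_k$, $m=m_0+m_1+\dots+m_k$. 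Each term is an iterated Lie bracket (in Joyce's vertex algebra of $X$) of a class $[L_{\beta_0,m_0}]$ with classes $[M_{\beta_i,m_i}]$ of one-dimensional semistable sheaves, paired with the descendent insertion $D$.

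The positivity hypothesis $-\beta'\cdot K_X>0$ plays two roles. First, only finitely many walls are crossed and only finitely many $\beta_i\le\beta$ appear. Second, the invariants $L_{\beta_0,m_0}$ are nonzero only in a bounded range of $m_0$ and satisfy their own symmetry $m_0\leftrightarrow -m_0$, via derived duality $E\mapsto \mathbb{D}(E)[1]$ exchanging the two ends of the path. Meanwhile the classes $[M_{\beta_i,m}]$ are periodic in $m$ under tensoring by a line bundle (shift by $\beta_i\cdot H$), and satisfy $m\mapsto -m$ symmetry under $F\mapsto \mathcal{E}xt^2(F,K_X)$. Positivity of $-K_X\cdot\beta_i$ means each bracket's pairing is a polynomial in the $m_i$ of bounded degree times a sign, rather than a divergent quantity.

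Substituting these facts into the wall-crossing formula, $Z^\PT_\beta(q|D)$ becomes a finite sum of series of the form $\sum_{(m_1,\dots,m_k)\in P\cap\Lambda} p(m)q^{m_0+\sum m_i}$. Here $P$ is a rational polyhedral cone (cut out by the slope-ordering conditions of the wall-crossing), $\Lambda$ is a lattice coset, and $p$ is a polynomial that arises from Joyce's descendent formula applied to $D$. Rationality then follows from Ehrhart-type theory: generating functions of polynomial weights over lattice points in rational cones are rational. The functional equation should follow from Stanley's reciprocity $\sigma_{P}(q^{-1})=(-1)^{\dim P}\sigma_{P^\circ}(q)$, combined with the dualities above, which account for the sign $(-1)^{\sum k_i}$ through the behaviour of $\ch_k$ under derived dual.

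I expect the main obstacle to be this last step. Reciprocity exchanges closed and open cones, so the boundary contributions (walls where slopes coincide, i.e. strictly semistable terms) must be shown to match exactly on both sides. I would first try to organize the wall-crossing sum so that the cone conditions are half-open in a way that is symmetric under the duality, and then check that the polynomial weights $p$ transform with the parity $(-1)^{\sum k_i}$.
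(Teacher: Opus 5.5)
Your outline follows the paper's route: Toda's path $\mu_s$, the $\PT/\sL$ wall-crossing in Joyce's vertex algebra, the symmetries under $\delta$ and $T_H$, and Ehrhart theory. The rationality part goes through along your lines. But there is a genuine gap at the functional equation, which is the step you leave open. Your plan cannot finish it because it never identifies the wall-crossing coefficients.

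The sum in Theorem~\ref{thm: wcintro} is not over a closed, open or half-open cone. The term indexed by $0\le\frac{m_1}{\beta_1\cdot H}\le\dots\le\frac{m_k}{\beta_k\cdot H}$ carries the fractional weight $\omord$. This weight is the product of $1/(\ell+1)!$ over maximal runs of $\ell$ equalities, counting $\mu_0=0$; an example is the term $\tfrac12[\sM_{1,0},\sL_{0,0}]$ for lines in $\BP^3$. Stanley reciprocity for a single closed or open cone does not apply directly to such a sum. Two ingredients are missing:
\begin{enumerate}
\item[(i)] Reciprocity with face-weights: $Z_{C,\omega,f}(\bx^{-1})=Z_{C,\omega^\vee,f^\vee}(\bx)$ with $\omega^\vee(F)=\sum_{G\supseteq F}(-1)^{\dim G}\omega(G)$. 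This follows by applying Stanley's theorem to $Z_{C,\omega}=\sum_F\omega(F)Z_{F^\circ}$, then using $x_i\partial_{x_i}$ and root-of-unity substitutions to handle the quasi-polynomial $f$.
\item[(ii)] The self-duality $\omord^\vee=(-1)^k\omord$ of Proposition~\ref{prop: weightselfdual}. Here $\omord(F_I)$ is the proportion of permutations of $[k+1]$ with a descent at every $i\in I$, and inclusion--exclusion turns descents into ascents.
\end{enumerate}
Your half-open idea can in fact be realized this way. Write $\omord=\frac{1}{(k+1)!}\sum_{\sigma}\mathbf{1}_{C_\sigma}$, where $C_\sigma$ makes the $i$-th inequality strict exactly when $\sigma(i)<\sigma(i+1)$. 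Then $\sigma\mapsto k+2-\sigma$ pairs each $C_\sigma$ with its reciprocal cone. This only becomes available once the coefficients are known.

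You also still owe the sign: the $(-1)^k$ from (ii) must be absorbed by the brackets. The paper uses three facts:
\begin{itemize}
\item The bracket with $\sM_{\beta_j,m_j}$ contributes $(-1)^{\chi}=(-1)^{m_j-d_{\beta_j}/2}$.
\item It applies $\Delta_{\chi_{\sym}}=\Delta_{-d_{\beta_j}}$, which does not depend on any $m_i$.
\item $\Delta_s$ has $\delta^\ast$-parity $s+1$ (Lemma~\ref{lem: parityWC}).
\end{itemize}
Write the iterated coproduct of $D$ as $\sum_i D^i_k\otimes\dots\otimes D^i_0$. Then $m_j\mapsto(-1)^{m_j-d_{\beta_j}/2}\langle\sM_{\beta_j,m_j},D^i_j\rangle$ has parity $|D^i_j|+d_{\beta_j}$. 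Also $\sum_{j=0}^k|D^i_j|\equiv|D|+\sum_{j\ge1}(d_{\beta_j}+1)$, so all signs collapse to $(-1)^{|D|}$.

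You also misplace the role of the hypotheses. None of the following uses positivity:
\begin{itemize}
\item Finiteness of the splittings of $\beta$ is automatic.
\item Finiteness of the set of $m$ with $\FL^0_{\beta,m}\neq\emptyset$ is Toda's result.
\item The $m$-dependence of the pairings is quasi-polynomial, of period dividing $2\beta\cdot H$ (not polynomial), because $(T_H)_\ast\sM_{\beta,m}=\sM_{\beta,m+H\cdot\beta}$.
\end{itemize}
Positivity, together with $h^{3,0}=0$, gives quasi-smoothness: $\Hom(E,E\otimes K_X)=0$ for $\mu_s$-semistable $E$ and for semistable one-dimensional sheaves. That is what provides the virtual classes on which $\sM$, $\sL$ and the wall-crossing rest.

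Moreover, $\FL^0_{\beta,m}$ generally contains strictly semistable objects and has no framing functor. So $\delta_\ast\sL_{\beta,m}=\sL_{\beta,-m}$ is not a formal consequence of duality. The paper defines $\sL$ through the wall-crossing formula. It proves the symmetry, and Theorem~\ref{thm: Linvariants}, by comparison with the $K$-theoretic invariants of \cite{KM}. That comparison requires proper good moduli spaces and a pseudo $\Theta$-stratification for $\FL^s_{\beta,m}$.

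Finally, the involutions to use are $E\mapsto E^\vee[2]$ and, on sheaves, $F\mapsto\mathcal{E}xt^2(F,\CO_X)$. Your $\mathcal{E}xt^2(F,K_X)$ sends $m\mapsto -m-d_\beta$ rather than $-m$. It also brings in $T_{K_X}^\ast$, which does not act on descendents by a sign.
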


A very similar result has been independently obtained in \cite{AJ}. We discuss briefly the comparison with their result and method in Section \ref{subsec: comparison}.

\begin{remark}Both conditions of the theorem hold when $X$ is Fano. The condition on the Hodge numbers is a consequence of Kodaira vanishing.\end{remark}

The positivity condition seems absolutely crucial in our approach, at least at the moment, and is used to guarantee that certain moduli spaces (e.g. moduli of 1-dimensional sheaves on $X$, but see also Theorem \ref{thm: quasismooth}) used in the proof have virtual fundamental classes. The condition on the Hodge numbers of $X$, on the other hand, is just a simplifying assumption so that we do not need to use a traceless deformation theory or, in other words, so that we can ignore that stable pairs should be treated as a fixed determinant theory; in general, the derived tangent bundle to $P_{\beta, m}$ at a stable pair $I=[\CO_X\to F]$ is controlled by
\[\Ext^i(I, I)_0=\ker\big(\!\Ext^i(I, I)\to H^i(\CO_X)\big)\,.\]
This can likely be removed with not too much work, but since the Fano case is our main interest we will not address this question in the present paper.

\begin{remark}
By keeping track of the poles in the proof of Theorem \ref{thm: main} (see in particular Remark \ref{rmk: combinatorialpoles}) one can show that the only possible poles of $Z_{\beta}^\PT(q|D)$ are at $q=0$ or when $-q$ is a $N$-th root of unity for some
\[N\in \big\{H\cdot \beta'\colon 0<\beta'\leq \beta\big\}\,,\]
where $H$ is any ample divisor in $X$. It is conjectured in \cite[Conjecture 1.1]{schimpfBethe} that the poles are actually much more restricted.\footnote{The conjecture by Schimpf corrects an earlier conjecture in \cite{survey}, as explained in \cite[Remark 1.2.2]{schimpfBethe}.}
\end{remark}

\subsection{Wall-crossing and strategy of proof}\label{subsec: proofintro}

Our proof uses the same basic strategy as Toda's proof of Conjecture \ref{conj: rationality} in the Calabi--Yau case, but requires new ingredients: a wall-crossing formula for descendent integrals and a more involved combinatorial analysis. 

One rough heuristic behind Toda's proof is the following observation: taking the derived dual of a stable pair $I$ with Chern character $(-1,0,\beta, m)$ produces a new complex $I^\vee[2]$ with Chern character $(-1,0,\beta, -m)$. If by a miracle we knew that this involution sent stable pairs to stable pairs, then $Z_\beta^{\PT}(q|D)$ would actually be a Laurent polynomial satisfying the $q\leftrightarrow q^{-1}$ symmetry. This is typically not true, but Toda describes a path of stability conditions $\{\mu_s\}_{s\in \BR}$ that connects the moduli of stable pairs (when $s\rightarrow +\infty$) with the moduli of duals of stable pairs (when $s\rightarrow -\infty$). In the middle of this path, at $s=0$, we find the moduli stack of $\mu_0$-semistable objects $\FL_{\beta, m}^0$, which are indeed closed under taking the derived dual. The difference between $\mu_0$-semistable objects and stable pairs is controlled by semistable 1-dimensional sheaves. The invariants counting $\mu_0$-semistable objects will be called $\sL$ invariants.  

We will formulate the wall-crossing formula between $\PT$ invariants and $\sL$ invariants using a Joyce style (cf. \cite{Jo17, joyce}) vertex algebra $\VX$, which we review in Section \ref{sec: descendentsVA}. An element in $\VX$ is a functional on tautological classes; for example, the moduli of stable pairs $P_{\beta, m}$ defines an element $\PT_{\beta, m}\in \VX$ by
\begin{equation}\label{eq: PTfunctional}
   \PT_{\beta, m}\colon \quad \ch_{k_1}(\gamma_1)\ldots\ch_{k_n}(\gamma_n)\mapsto \int_{[P_{\beta, m}]^\vir}\ch_{k_1}(\gamma_1)\ldots\ch_{k_n}(\gamma_n)\in \BQ\,.
\end{equation}

Wall-crossing formulas are written using a ``Lie bracket'' $[-,-]$ on $\VX$ (see Section \ref{subsubsec: liebracket} for its definition). We will prove the following wall-crossing formula in Section \ref{sec: wallcrossing}:

\begin{thm}\label{thm: wcintro}
Let $X, \beta$ be as in the statement of Theorem \ref{thm: main}. We have an identity
    \[\PT_{\beta, m}=\sum_{\substack{\beta_0+\ldots+\beta_k=\beta\\
    m_0+\ldots+m_k=m\\
    0\leq\frac{m_1}{\beta_1\cdot H}\leq \ldots\leq \frac{m_k}{\beta_k\cdot H}}}\omord\Big(\frac{m_1}{\beta_1\cdot H},\ldots, \frac{m_k}{\beta_k\cdot H}\Big)\big[\mathsf{M}_{\beta_k, m_k},\big[\ldots,\big[\mathsf{M}_{\beta_1, m_1},\mathsf{L}_{\beta_0, m_0}\big]\ldots \big]\big]\,\]
in $\VX$, where $\sM_{\beta, m}\in \VXhat$ and $\sL_{\beta, m}=\sL_{\beta, m}^0\in \VX$ are ``generalized homological invariants'' counting $\mu_H$-semistable 1-dimensional sheaves and $\mu_0$-semistable complexes, respectively, and $\omega_{\textup{ord}}$ are the combinatorial coefficients in Definition \ref{def: omord}. Moreover, $\sM_{\beta, m}, \sL_{\beta, m}$ are well behaved with respect to tensoring by the line bundle $T_H(-)=H\otimes -$ and with respect to the derived dual $\delta(-)=(-)^\vee[2]$, i.e.
  \[(T_H)_\ast \sM_{\beta, m}=\sM_{\beta, m+H\cdot \beta}\, , \quad \delta_\ast \sM_{\beta, m}=\sM_{\beta, -m} \quad\textup{and}\quad \delta_\ast \sL_{\beta, m}=\sL_{\beta, -m}\, . \]
\end{thm}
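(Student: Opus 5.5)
We follow Toda's path of weak stability conditions $\{\mu_s\}_{s\in\BR}$ on the tilted heart $\CA_X=\langle \CO_X,\Coh_{\le 1}(X)[-1]\rangle_{\mathrm{ex}}$ (or its analogue containing stable pairs and shifted 1-dimensional sheaves). For objects of class $(-1,0,\beta,m)$ the relevant phases are governed by the slopes $\frac{m'}{\beta'\cdot H}$ of 1-dimensional subquotients. For $s\gg 0$, $\mu_s$-semistable objects of class $(-1,0,\beta,m)$ are exactly stable pairs. At $s=0$ they form the moduli $\FL^0_{\beta,m}$. The walls in $(0,\infty)$ are at $s=\frac{m'}{\beta'\cdot H}$ with $\beta'\le\beta$. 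Since there are only finitely many $\beta'$ and bounded $m'$ for fixed $(\beta,m)$, the walls are locally finite.

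\textbf{Step 1: the invariants exist.} Using the positivity hypothesis $-\beta'\cdot K_X>0$ and $h^{p,0}=0$, we show that the obstruction theories on the relevant stacks are perfect of amplitude $[-1,0]$ after rigidification. For 1-dimensional sheaves $F$, $\Ext^3(F,F)=\Hom(F,F\otimes K_X)^\ast=0$ for stable $F$, since $K_X$ is negative on supports. For $\mu_0$-semistable complexes $I$, Serre duality gives $\Ext^3(I,I)=\Hom(I,I\otimes K_X)^\ast$, which should vanish by the same negativity combined with the $\mu_0$ filtration. This is the quasi-smoothness input, for which we invoke Theorem~\ref{thm: quasismooth}. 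Joyce's construction then yields $\sM_{\beta,m}$, defined via pairs on an auxiliary framing and hence landing in $\VXhat$ because of the positive rank-zero issue, and $\sL_{\beta,m}\in\VX$.

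\textbf{Step 2: wall-crossing.} We apply the authors' extension of Joyce's descendent wall-crossing to non-standard hearts, verifying its hypotheses: the existence of the heart, Noetherianity, and properness of the semistable moduli stacks for $\mu_s$. At each wall $s_0=\frac{m'}{\beta'\cdot H}$ the formula expresses the $\mu_{s_0+\epsilon}$-invariant as iterated brackets of the $\mu_{s_0-\epsilon}$-invariant with $\sM$'s of slope $s_0$. Only rank $-1$ objects appear, together with rank-0 sheaves, so the brackets are one-sided: the rank $-1$ class occurs once, and all other factors are 1-dimensional sheaves.

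Composing the wall-crossings from $s=0$ to $s=\infty$ gives iterated brackets ordered by slopes $\frac{m_i}{\beta_i\cdot H}\ge 0$. We then check that the combined coefficients are the $\omord$ of Definition~\ref{def: omord}. Boundary effects at slope exactly $0$ must be tracked, which explains the $0\le$ condition in the sum. This coefficient bookkeeping is mostly formal. The main obstacle is Step 1 together with the hypotheses of the wall-crossing theorem at $s=0$, where strictly semistable objects with rank $-1$ pieces require careful control.

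\textbf{Step 3: symmetries.} $T_H$ is an autoequivalence preserving $\mu_H$-semistability of 1-dimensional sheaves and shifting $\ch_3$ by $H\cdot\beta$. Its action on obstruction theories is compatible, so it intertwines the Joyce construction, giving $(T_H)_\ast\sM_{\beta,m}=\sM_{\beta,m+H\cdot\beta}$. For $\delta$, we use that $F\mapsto \mathcal{E}xt^2(F,\omega_X)$ preserves $\mu_H$-semistability of pure 1-dimensional sheaves and negates slope up to twist. Toda's result that $\FL^0$ is closed under $\delta$ gives the isomorphism $\FL^0_{\beta,m}\cong\FL^0_{\beta,-m}$. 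Finally, $\delta$ is compatible with the symmetric pairing $\Ext(E,F)\leftrightarrow\Ext(F^\vee,E^\vee)$ used to build the classes, so the Joyce classes correspond under $\delta_\ast$.
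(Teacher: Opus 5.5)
There is a genuine gap, and it sits exactly where the paper's main technical work lies.

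\textbf{Where Step 1 fails.} In Step 1 you assert that Joyce's construction produces $\sL_{\beta,m}=\sL^0_{\beta,m}\in\VX$. But $\FL^0_{\beta,m}$ generally contains strictly semistable objects. These come from slope-zero 1-dimensional sub- or quotient sheaves, as the term $\tfrac12[\sM_{1,0},\sL_{0,0}]$ in the $\BP^3$ example shows. Joyce's definition of generalized classes in that situation needs a framing functor (auxiliary pair moduli). No framing functor is known for $\mu_s$-semistable rank $-1$ objects, so in your argument $\sL_{\beta,m}$ is simply undefined.

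The paper sidesteps this by \emph{defining} $\sL^s_{\beta,m}$ through the expected wall-crossing formula \eqref{eq: Linvs}. The displayed identity then follows from three ingredients:
\begin{enumerate}
\item inverting \eqref{eq: Linvs} via \cite[Theorem 3.11]{joyce};
\item Toda's computation of the coefficients between $\mu_\epsilon$ and $\mu_\infty$, which gives $\omord$ with the strict inequality $0<\frac{m_1}{\beta_1\cdot H}$;
\item the relation $\sL^\epsilon=\sum\frac{1}{(k+1)!}[\sM,\ldots,[\sM,\sL^0]]$ across the slope-zero wall, with all $\sM$ of slope $0$.
\end{enumerate}
Your Step 2 bookkeeping is compatible with this.

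\textbf{Where Step 3 fails.} The price of that definition is that $\delta_\ast\sL_{\beta,m}=\sL_{\beta,-m}$ is no longer formal. A class manufactured from $\PT$ and $\sM$ is not visibly attached to the stack $\FL^0_{\beta,m}$. So the isomorphism $\delta\colon\FL^0_{\beta,m}\simeq\FL^0_{\beta,-m}$ says nothing about it directly. This symmetry is also precisely the input that later yields the $q\leftrightarrow q^{-1}$ functional equation, so it cannot come for free. Your Step 3 (``the Joyce classes correspond under $\delta_\ast$'') presupposes the intrinsic construction that does not exist.

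\textbf{The missing idea.} Use the framing-free $K$-theoretic invariants $\sL^{s,K}_{\beta,m}$ of \cite{KM}, in topological $K$-theory so that odd classes are seen.
\begin{enumerate}
\item These are intrinsic to $\FL^s_{\beta,m}$, hence $\delta$-symmetric by \cite[Proposition 4.10]{KM}.
\item Comparing the $K$-theoretic wall-crossing with \eqref{eq: Linvs} via the homological-lift results of \cite{KM} (Theorems 8.6, 8.8 and Lemma 8.4 there) shows that $\sL^s_{\beta,m}$ is a homological lift of $\sL^{s,K}_{\beta,m}$.
\item $\pt$-normalization then upgrades equality in $\VXhat$ to equality in $\VX$.
\end{enumerate}
The same route also handles the $\delta$-symmetry of $\sM$, since a contravariant $\delta$ is not obviously compatible with Joyce's framed pairs.

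\textbf{Hypotheses you have not checked.} Applying \cite{KM} requires more than you list. The relevant category $\CB$ is not abelian, being closed under subobjects but not quotients, and $\mu_s$ is a weak, non-additive stability. So besides the quasi-smoothness you did invoke, one must prove by hand two further facts about each $\FL^s_{\beta,m}$:
\begin{enumerate}
\item It has a \emph{proper good moduli space}. This goes via $\Theta$-reductivity, $S$-completeness and universal closedness, because the criteria of \cite{AHLH} for numerical invariants do not obviously apply. ``Properness of the semistable stacks'' is not the right condition.
\item It is the semistable locus of a pseudo $\Theta$-stratification on $\FL_{\beta,m}$.
\end{enumerate}
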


The reason why we call them ``generalized homological invariants'' is that the moduli spaces $M_{\beta, m}$ or $L_{\beta, m}$ might include strictly semistable objects, in which case these classes cannot be simply defined as in \eqref{eq: PTfunctional}. While Joyce's machinery in \cite{joyce} gives a definition of $\sM_{\beta, m}$, the $\sL$ invariants are more difficult due to the absence of natural framing functors; see the discussion in the introduction of \cite{KM}, where the authors introduced a way to define ``generalized $K$-theoretic invariants''. 

Instead, our approach to Theorem \ref{thm: wcintro} is that we define the $\sL$ classes so that they satisfy the wall-crossing formula of Theorem \ref{thm: wcintro} and use the $K$-theoretic machinery from \cite{KM} to deduce the desired properties, namely the fact that they are well-behaved with respect to the derived dual and that, when every object of $\FL_{\beta, m}^s$ is stable, $\sL_{\beta, m}^s$ really corresponds to integration of tautological classes against the virtual fundamental class of the good moduli space $L_{\beta, m}^s$, cf. Theorem~\ref{thm: Linvariants}. 

Since algebraic $K$-theory has the difficulty that it does not produce odd cohomology classes, we actually use a version of our previous work, discussed in \cite[Appendix A]{KM}, that replaces algebraic $K$-theory by topological $K$-theory, as defined by \cite{B16}. For a smooth projective scheme $X$, Blanc's topological $K$-theory is isomorphic to $H^\ast(X)$ via the Chern character map, which allows us to deduce homological wall-crossing formulas in the generality needed.

Applying the results of \cite{KM} requires some technical conditions on the stacks of $\mu_s$-semistable objects, namely that they admit a proper good moduli space (Theorem \ref{thm: goodmoduli}), that they are quasi-smooth (Theorem \ref{thm: quasismooth}) and that they are the semistable locus of some pseudo $\Theta$-stratification (Theorem \ref{thm: Thetastratification}). This technical work is done in Section \ref{sec: technicalstacks}. We remark that the quasi-smoothness is the place where the positivity assumptions that we make in Theorem \ref{thm: main} are crucially used. 

\subsubsection{Combinatorial analysis}

Theorem \ref{thm: wcintro} is the only geometric input required for the proof of Theorem \ref{thm: main}, and the rest of the proof consists of unpacking the Lie brackets and combinatorics. If we fix $\beta$ in the $\PT/\sL$ wall-crossing formula, we will get finitely many possible partitions $\{\beta_i\}$ of $\beta$ contributing to the sum. Once this partition is fixed, we analyze its contribution to $Z^\PT_\beta(q|D)$, which is a sum over the cone defined by inequalities $0\leq\frac{m_1}{\beta_1\cdot H}\leq \ldots\leq \frac{m_k}{\beta_k\cdot H}$. 

Generating series obtained by summing over cones are the subject of Ehrhart theory \cite{ehrhart, stanley, BRcomputingcontinuous}, and indeed we end up deducing the rationality and symmetry of $Z^\PT_\beta(q|D)$ from general theorems about generating functions defined by sums over cones. The sum we are interested in is weighted by the coefficients $\omord$, so we extend Stanley's reciprocity theorem from Ehrhart theory by allowing a combination of quasi-polynomial weights and face-weights. If $C$ is a rational cone, $\omega$ is a face-weight (i.e. a constructible function on $C$, see Definition~\ref{def: faceweight}) and $f$ is a quasi-polynomial, we will show in Theorem \ref{thm: reciprocityII} that
\begin{equation}\label{eq: Zbothweightsintro}
    Z_{C, \hspace{0.02cm} \omega, f}(\bx)=\sum_{\bn\in C\cap \BZ^k} \omega(\bn)f(\bn)\bx^{\bn}\,.
\end{equation}
is the expansion of a rational function and that it satisfies a reciprocity theorem of the form 
  \[Z_{C, \hspace{0.02cm} \omega, f}(\bx^{-1})=Z_{C, \hspace{0.02cm} \omega^\vee\hspace{-0.05cm}, f^\vee\hspace{-0.05cm}}(\bx)\,.\]

The wall-crossing coefficients $\omord$ are a self-dual face-weight, and we use this to deduce Corollary \ref{cor: combinatorialsymm}, which is the combinatorial input used in the proof of Theorem \ref{thm: main}. Section \ref{sec: ehrhart} is completely independent from the rest of the paper and can be read by someone who does not know what a sheaf is.

\subsection{Primary insertions}\label{subsec: primaryintro}
Stable pair invariants that only use $\ch_2(\gamma)$ as insertions (i.e. \eqref{eq: D} with $k_i=2$ for all $i$) are called primary invariants. These are more geometric: the classes $\ch_2(\gamma)$ correspond to imposing incidence conditions on the support of the stable pair. 

The famous $\GW/\PT$ correspondence \cite{mnop1, mnop2} has been established by Pardon \cite{pardon} for primary insertions on Fano 3-folds. As in the Calabi--Yau case, primary insertions on the GW side have long been expected to be governed by Gopakumar--Vafa (GV) or BPS invariants \cite{3questions}, and this has been proven to be true \cite{zinger, IPgvinvariants, DW, DIW}. The $\GW$/$\GV$/$\PT$ correspondence implies a very strong rationality result for the generating series $Z^\PT_\beta(q|D)$ when $D$ is primary; for instance, the only pole (other than $q=0$) is $q=-1$, while for general insertions we often have poles at roots of unity. 

Specializing our general wall-crossing formula to primary insertions yields a substantial simplification, making the wall-crossing formula resemble a lot its Calabi--Yau counterpart. We will prove in Theorem \ref{thm: primarywc} the equality
\begin{equation}\label{eq: wcprimary}Z^{\PT}_{\textup{prim}}(Q, q, \bt)=\exp\left(\frac{1}{q^{1/2}+q^{-1/2}}Z^{\mathsf{M}}_{\textup{prim}}(Q, \bt)\right)Z^{\sL}_{\textup{prim}}(Q, q, \bt)\,\end{equation}
where the terms are defined in \eqref{eq: partitionfunctions}. Roughly speaking, $Z^{\PT}_{\textup{prim}}(Q, q, \bt)$ is the generating series of all primary $\PT$ invariants, with the variable $q$ being used to keep track of $m=\ch_3$, $Q$ keeping track of $\beta=\ch_2$ and $\bt$ keeping track of the primary insertions. The series $Z^{\mathsf{L}}_{\textup{prim}}(Q, q, \bt)$ is defined in a similar way but replacing $\PT$ invariants with $\sL$ invariants. The series $Z^{\mathsf{M}}_{\textup{prim}}(Q, \bt)$ contains all the information concerning primary invariants on moduli of 1-dimensional sheaves $M_{\beta, m}$, but only for curve classes $\beta$ with $d_\beta=1$! In particular, when there are no such classes (e.g. if $X=\BP^3$) we simply have $Z^{\mathsf{M}}_{\textup{prim}}=0$, and thus primary $\PT$ invariants agree with primary $\sL$ invariants.

Formula \eqref{eq: wcprimary} implies a very strong rationality, and moreover identifies some genus 0 GV invariants of Fano 3-folds. It is better stated in terms of connected $\PT$ invariants, which are defined formally from usual $\PT$ invariants by essentially taking a logarithm of $Z^{\PT}_{\textup{prim}}(Q, q, \bt)$, see Section \ref{subsec: primaryrationality} for the precise definition.

\begin{thm}\label{thm: primarystrongrationality}
Let $X, \beta$ be as in Theorem \ref{thm: main} and let $D$ be a primary insertion. If $d_\beta>1$ then the connected $\PT$ generating series $Z_{\beta}^{\PT, \circ}(q|D)$ is a Laurent polynomial in $q$. If $d_\beta=1$ then
\[Z_{\beta}^{\PT, \circ}(q|D)=\frac{1}{q^{1/2}+q^{-1/2}}\int_{\sM_\beta} D+\textup{ Laurent polynomial in }q\,.\]
\end{thm}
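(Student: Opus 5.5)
The plan is to derive the statement from the primary wall-crossing formula \eqref{eq: wcprimary} (Theorem \ref{thm: primarywc}) by taking logarithms. The connected series are defined by $\log Z^{\PT}_{\textup{prim}}(Q,q,\bt)$, so \eqref{eq: wcprimary} becomes
\[\log Z^{\PT}_{\textup{prim}}(Q,q,\bt)=\frac{1}{q^{1/2}+q^{-1/2}}Z^{\mathsf{M}}_{\textup{prim}}(Q,\bt)+\log Z^{\sL}_{\textup{prim}}(Q,q,\bt)\,.\]
I then extract the coefficient of $Q^\beta$ together with the monomial in $\bt$ that encodes $D$. The first term contributes only when $d_\beta=1$, since $Z^{\mathsf M}_{\textup{prim}}$ only involves such classes. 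In that case its coefficient is $\int_{\sM_\beta}D$, where $\sM_\beta$ denotes the relevant class $\sM_{\beta,m}$; by Theorem \ref{thm: wcintro}, $(T_H)_\ast\sM_{\beta,m}=\sM_{\beta,m+H\cdot\beta}$, and primary insertions are insensitive to this twist, so the class is independent of $m$. It remains to show that the coefficient of $Q^\beta$ in $\log Z^{\sL}_{\textup{prim}}$ is a Laurent polynomial in $q$.

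For that step I need, for each fixed $\beta'\le\beta$ and fixed primary insertion, that only finitely many $m$ give nonzero $\sL_{\beta',m}$-integrals. This would follow from boundedness: the stacks $\FL^0_{\beta',m}$ of $\mu_0$-semistable objects are empty for $|m|\gg0$. For $m>0$ I would argue as follows. A $\mu_0$-semistable object $E$ with $\ch=(-1,0,\beta',m)$ should admit a presentation as an extension built from an ideal-sheaf-type piece and a $1$-dimensional sheaf whose slopes are bounded, in Toda's sense, by $s=0$. Semistability at $s=0$ then forces $m\le C(\beta')$, by the usual bound on $\chi$ of $1$-dimensional sheaves, or equivalently by finiteness of the walls along the path $\{\mu_s\}$, which is part of the good moduli space/properness package in Theorem \ref{thm: goodmoduli}. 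The derived dual symmetry $\delta_\ast\sL_{\beta',m}=\sL_{\beta',-m}$ from Theorem \ref{thm: wcintro} then gives the bound for $m<0$ as well.

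For the generalized classes defined through wall-crossing, I would check that $\sL_{\beta',m}$ vanishes whenever $\FL^0_{\beta',m}$ is empty. In \cite{KM} the class is supported on the stack, so this holds. With these bounds, $Z^{\sL}_{\textup{prim}}$ is a power series in $Q$ whose coefficients are Laurent polynomials in $q^{1/2}$, with constant term $1$ in $Q$. Its logarithm therefore has Laurent polynomial coefficients as well, since each $Q^\beta$ coefficient is a finite polynomial expression in the lower ones.

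The main obstacle is the boundedness step, namely showing that $\FL^0_{\beta',m}$ is empty for $m\gg0$. I would first try to extract it from the finiteness of walls and the rationality structure already used for Theorem \ref{thm: main}. The fallback is a direct argument: take the $\mu_0$-destabilizing $1$-dimensional quotient $F$ of $\mathcal{H}^0$ or sub of $\mathcal{H}^1$, and note that for large $m$ the slope $m/(\beta'\cdot H)$ exceeds the maximal slope permitted at $s=0$.
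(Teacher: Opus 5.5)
Your proposal is essentially the paper's proof: take the logarithm of Theorem~\ref{thm: primarywc}, observe that the $\sM$-term contributes only $\frac{1}{q^{1/2}+q^{-1/2}}\int_{\sM_\beta}D$, and only when $d_\beta=1$, and conclude because $Z^{\sL,\circ}_\beta(q|D)$ is a polynomial in the Laurent polynomials $Z^{\sL}_{\beta'}(q|D_I)$. The boundedness you flag as the main obstacle is already available as Theorem~\ref{thm: propertiesLspaces}(3) (Toda: only finitely many $m$ have $\FL^0_{\beta',m}\neq\emptyset$), combined with Theorem~\ref{thm: Linvariants}, exactly as in the proof of Theorem~\ref{thm: main}; your suggested derivation from the good moduli space package of Theorem~\ref{thm: goodmoduli} would not give it, since properness concerns a fixed $(\beta',m)$, and for $m$-independence note that twisting by $H$ alone only gives periodicity modulo $H\cdot\beta$, so you should use Lemma~\ref{lem: easychiindependence}, which twists by $K_X$.
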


The theorem says that the only source of poles in primary $\PT$ series are curve classes $\beta$ with $d_\beta=1$, which is consistent with the $\PT$/$\GV$ correspondence \cite[(3.25)]{PT}. Moreover, it identifies the genus 0 GV invariant for such curve classes as an integral over a moduli of 1-dimensional sheaves
\[n_{g=0, \beta}(\gamma)=\int_{\sM_{\beta}}\ch_2(\gamma)\in \BZ\quad \textup{ if }d_\beta=1\,.\]
This is analogous to a conjecture of Katz in the Calabi--Yau case. Toda showed in \cite[Theorem 6.4]{todacurvecounting}, using the Calabi--Yau version of the $\PT$/$\sL$ wall-crossing formula, that Katz' conjecture is equivalent to the $\chi$-independence (or $m$-independence, in our notation) of some invariants defined out of $M_{\beta, m}$. For us, this $\chi$-independence is straightforward (cf. Lemma~\ref{lem: easychiindependence}) since we are only concerned with curve classes such that $d_\beta=1$ (which in particular implies that $\beta$ is irreducible, so stability in curve class $\beta$ is equivalent to purity).

\subsection{An example: lines in $\BP^3$}\label{subsec: introexample}
We will illustrate the wall-crossing formula of Theorem \ref{thm: wcintro} and the proof of Theorem \ref{thm: main} in the case that $X=\BP^3$ and $\beta$ is the class of a line. The virtual dimension in this example is 4 and $P_{1, m}$ is empty for $m< -1$. When $m\geq -1$ the moduli space $P_{1,m}$ is a $\BP^{m+1}$ bundle over the Grassmannian $\textup{Gr}(2, \BC^4)$. Stable pair invariants can be explicitly computed by either using this geometric description (and identifying the virtual fundamental class, as in \cite[Section 6.2]{moreira}) or by localization. The generating series $Z_{\beta=1}$ are recorded in \cite[Appendix A]{MOOP}; as an example we will take $D=\ch_7({\bf 1})$, in which case 
    \begin{equation}\label{eq: examplegeneratingseries}Z_{\beta=1}(q|\ch_7({\bf 1} ))=\frac{q^{-1} (q - 1) (2 + 3 q - 28 q^2 + 3 q^3 + 2 q^4)}{18 (1 + q)^3}\,.
    \end{equation}

The wall-crossing formula in this case is
    \[\sum_{m\geq -1} q^m\PT_{1, m}=q^{-1}\sL_{1, -1}+\sL_{1, 0}+q\sL_{1, 1}+\sum_{m>0}q^m[\sM_{1,m},\sL_{0,0}]+\frac{1}{2}[\sM_{1,0},\sL_{0,0}]\,.\]
    The class $\sL_{0,0}$ in the formula is very simple: it corresponds to the moduli space with a single point $\{\CO_X[1]\}$. We only have $\sL_{1, m}$ for $m=-1,0,1$ since $L_{\beta, m}=P_{\beta, m}$ is empty for $m<-1$, and by symmetry of the $L$ invariants $L_{\beta, m}$ is empty also for $m>1$.

For any fixed descendent $D\in \BD^X$ the contribution $(-1)^m \int_{[\sM_{1,m},\sL_{0,0}]} D$ is polynomial in $m$ (for higher degree $\beta$ it is actually a quasi-polynomial). Indeed, from the definition of $[-,-]$ one finds that 
    \[\int_{[\sM_{1,m},\sL_{0,0}]} \ch_7({\bf 1} )=\frac{(-1)^m}{6}\int_{\sM_{1,m}}\ch_4({\bf 1})-\ch_3({\bf 1})\ch_3(H)\,. \]
Note that tensoring by line bundles defines isomorphisms $-\otimes \CO_{\BP^3}(m)\colon M_{1,0}\xrightarrow{\sim} M_{1, m}$; indeed, $M_{1,m}$ are all isomorphic to the space of lines in $\BP^3$, i.e. the Grassmannian $\textup{Gr}(2, \BC^4)$, with a non-trivial obstruction bundle that makes its virtual dimension 1.  These ``twist by a line bundle'' isomorphisms are the source of quasi-polynomiality, cf. Proposition \ref{prop: Mquasipolynomial}. In our example
\begin{align*}
\int_{\sM_{1,m}}&\ch_4({\bf 1})-\ch_3({\bf 1})\ch_3(H)\\
&=\int_{\sM_{1,0}}\ch_4({\bf 1})+m\ch_3(H)+\frac{m^2}{2}\ch_2(H^2)-\big(\ch_3({\bf 1})+m\ch_2(H)\big)\big(\ch_3(H)+m\ch_2(H^2)\big)
\end{align*}
is evidently a polynomial of degree $\leq 2$.

    Since the $\sL$ invariants only contribute for $m\leq 1$, we can recover this polynomial by expanding \eqref{eq: examplegeneratingseries}:
    \[\int_{[\sM_{1,m},\sL_{0,0}]} \ch_7({\bf 1} )=\frac{5}{9}(-1)^m(1-3m^2)\,.\]
    Observe that there is no linear term and that the polynomial $1-3m^2$ is even, which is a consequence of the duality symmetry $\delta_\ast \sM_{1, m}=\sM_{1,-m}$. The $\sL$ invariants can also be recovered from \eqref{eq: examplegeneratingseries}:
    \[\int_{\sL_{1,1}}\ch_7({\bf 1} )=\frac{1}{9}=-\int_{\sL_{1,-1}}\ch_7({\bf 1} )\quad \textup{ and }\quad  \int_{\sL_{1,0}}\ch_7({\bf 1} )=0\,.\]
    The symmetry between $\sL_{1,1}$ and $\sL_{1,-1}$, and the vanishing for $\sL_{1,0}$ are explained again by the symmetry $\delta_\ast \sL_{1,m}=\sL_{1, -m}$. 

\begin{remark}
There is a natural projection $P_{1,m}\to M_{1,m}$ that forgets the section. When $m>1$ this map can be seen to be a virtual projective bundle, and \cite[Proposition 6.2]{blm} shows that $\PT_{1,m}=[\sM_{1,m}, \sL_{0,0}]$, which is precisely the wall-crossing formula when no $\sL_{1,m}$ invariants contribute. This phenomenon can also be seen in the calculation of stable pairs supported on lines in the cubic 3-fold in \cite[Sections 5, 6]{moreira}, where the $m=0$ case has to be treated separately and $m>0$ becomes uniform since the moduli of stable pairs becomes a virtual projective bundle over the moduli of 1-dimensional sheaves. The term $\Lambda^2 \mathcal S^\vee$ in \cite[Proposition 24]{moreira}, which is the obstruction bundle on $F(X)$, is explained by this virtual projective bundle structure.
\end{remark}

\subsection{Comparison with \cite{AJ}}\label{subsec: comparison}
Independently and simultaneously, R. Anderson and D. Joyce have shown that the rationality part of Conjecture \ref{conj: rationality} holds for any curve class $\beta$ as in the statement of Theorem \ref{thm: main} (which they call a ``superpositivite'' curve class), but without the constraint on Hodge numbers. They also work equivariantly. On the other hand, their methods do not prove the functional equation. 

While their proof also uses wall-crossing, it is a different type of wall-crossing: they treat stable pairs as actual pairs, and not as an object in the derived category, and use the results from an updated (to appear) version of \cite{joyce}. Their method is similar in spirit to the rank reduction procedure that appears for example in work of Thaddeus \cite{thaddeus} for curves or of Mochizuki \cite{mochizuki} for surfaces.  

The pair obstruction theory matches the obstruction theory of complexes when $m>C_\beta$ for some constant $C_\beta$, and controlling $\PT_{\beta, m}$ for large $m$ is enough to prove rationality, but not the functional equation. In our approach, we truly wall-cross in the derived category and make use of the foundational developments in \cite{KM}, which allows us to control all $\PT_{\beta, m}$ with $\sL$ invariants.

\subsection{Acknowledgments} We would like to thank Reginald Anderson, Daniel Halpern--Leistner, Dominic Joyce, Woonam Lim, Davesh Maulik, Rahul Pandharipande and Weite Pi for conversations related to stable pairs and wall-crossing. 

We especially thank Reginald Anderson and Dominc Joyce for cordial communications regarding their independent work \cite{AJ}.

\section{Reciprocity theorems for weighted sums over cones}\label{sec: ehrhart}

We prove in this section some elementary combinatorial results regarding generating series defined by sums over integral points on cones, extending slightly well known results in Ehrhart theory -- a good introductory reference for the subject is for example \cite{BRcomputingcontinuous}. We recall some standard terminology here: a (rational) cone $C\subseteq \BQ^k$ is the intersection of finitely many (open or closed) half spaces $\langle y, a\rangle\geq 0$ or $\langle y, a\rangle> 0$ for some $a\in \BQ^k\setminus \{0\}$. A face of a closed cone $C$ is the intersection of $C$ with a supporting hyperplane; we denote by $\dim(F)$ the dimension of a face $F$. We denote by $F^\circ=F\setminus \bigcup_{F'\subsetneq F} F'$ the relative interior of $F$; note that $C$ is the disjoint union of $F^\circ$ over all its faces $F$. 

\begin{definition}\label{def: faceweight}
    Let $C$ be a closed cone. A face-weight $\omega$ on $C$ is an assignment of a rational number $\omega(F)\in \BC$ to every face $F$ of $C$. 

    Given a face-weight $\omega$, we define its dual weight $\omega^\vee$ by
    \[\omega^\vee(F)=\sum_{G\supseteq F}(-1)^{\dim G}\omega(G)\,.\]
    We say that $\omega$ is self-dual if $\omega^\vee=(-1)^{\dim C}\omega$. 
    \end{definition}

A face-weight can also be regarded as a constructible function on $C$, by letting $\omega(\bn)=\omega(F)$ where $F$ is the face whose relative interior contains $\bn\in C$. 

\begin{example}\label{ex: standardweight}
If $\omega(F)=1$ for all $F$ then 
\[\omega^\vee(F)=\begin{cases}
    (-1)^{\dim C}& \textup{ if }F=C\\
    0 & \textup{otherwise}
\end{cases}\,.\]
We call this $\omega$ the standard weight.
\end{example}

\begin{example}\label{ex: faceweighteasy}
Let $C$ be a simplicial cone of dimension $k$. The poset of faces of $C$ is opposite to the poset of subsets of $[k]\coloneqq \{1, 2, \ldots, k\}$, and we denote by $F_I$ the face of $C$ corresponding to a subset $I\subseteq [k]$; we have $\dim F_I=k-|I|$. For the sake of concreteness we may assume $C$ is given by the intersection of the half-spaces $y_i\geq 0$ and $F_I$ is the intersection with the hyperplanes $y_i=0$ for $i\in I$. Then the following two face-weights are self-dual:
\[\omega_1(F_I)=\frac{1}{2^{|I|}}\textup{ and }\omega_2(F_I)=\frac{1}{|I|+1}\,.\]
This follows from the identities
\[\sum_{j=0}^\ell (-1)^j\frac{1}{2^j}\binom{\ell}{j}=\frac{1}{2^\ell}\textup{ and }\sum_{j=0}^\ell (-1)^j\frac{1}{j+1}\binom{\ell}{j}=\frac{1}{\ell+1}\,,\]
which are both consequences of the binomial theorem.
\end{example}

For our next example we introduce some notation:

\begin{definition}\label{def: omord}
Consider real numbers $0\leq \mu_1\leq \mu_2\leq\ldots\leq \mu_k$. For convenience, set $\mu_0=0$. We define 
\[\omord(\mu_1, \ldots, \mu_k)=\prod_{j=1}^l \frac{1}{(c_j-c_{j-1})!}\]
where $-1=c_{0}<c_1<\ldots<c_l=k$ are such that the following holds: for $i=0, \ldots, k-1$ we have $\mu_{i}<\mu_{i+1}$ if and only if $i=c_j$ for some $j=1, \ldots, l-1$.
\end{definition}

In words, $i=c_j$ are the places where $\mu_i$ jumps and $\omord(\mu_1, \ldots, \mu_k)$ is the product of inverse of the factorials of lengths of chains of equalities in $0\leq \mu_1\leq \mu_2\leq\ldots\leq \mu_k$. For example, if
\[0=\mu_1=\mu_2<\mu_3<\mu_4=\mu_5\]
then $\omord(\mu_1, \ldots, \mu_5)=\frac{1}{3!1!2!}=\frac{1}{12}$.

\begin{example}\label{ex: faceweightwc}
We consider the cone defined by the inequalities $0\leq \mu_1\leq \ldots\leq \mu_k$, which is also a simplicial cone of dimension $k$. Then $\omord(\mu_1, \ldots, \mu_k)$ only depends on the face in which $\boldsymbol{\mu}=(\mu_1, \ldots, \mu_k)$ lies. The faces of the cone are indexed again by subsets $I\subseteq [k]$; $F_I$ is the face obtained by intersecting $C$ with the hyperplanes $\mu_i=\mu_{i-1}$ for $i\in I$, where we set $\mu_0=0$. We write $I$ as a union of consecutive blocks
\[I=\bigcup_{i=1}^l [a_i, b_i]\]
for some $a_i, b_i$ with $a_i<b_i<a_{i+1}-1$. Then
\[\omord(\boldsymbol{\mu})=\omord(F_I)=\prod_{i=1}^l \frac{1}{(b_i-a_i+2)!}\,\]
if $\boldsymbol{\mu}\in F_I^\circ$. In the example above $I=[1,2]\cup [5,5]$.
\end{example}

\begin{proposition}\label{prop: weightselfdual}
    The face-weight of Example \ref{ex: faceweightwc} is self-dual.
\end{proposition}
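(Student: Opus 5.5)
The plan is to unwind the definition of the dual weight into a purely combinatorial identity, reduce it to a single block of consecutive hyperplanes by multiplicativity, and then settle that case with an exponential generating function.

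Faces of the simplicial cone $C=\{0\leq \mu_1\leq\ldots\leq\mu_k\}$ are indexed by subsets $I\subseteq[k]$ as in Example \ref{ex: faceweightwc}, with $\dim F_I=k-|I|$. A face $G=F_J$ contains $F_I$ if and only if $J\subseteq I$. Self-duality $\omega^\vee=(-1)^k\omega$ is therefore equivalent to the identity
\[\sum_{J\subseteq I}(-1)^{|J|}\,\omord(F_J)=\omord(F_I)\qquad\text{for every } I\subseteq[k]\,.\]

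\textbf{Step 1: reduction to one block.} Write $I=\bigcup_{i=1}^l[a_i,b_i]$ as a disjoint union of maximal blocks of consecutive integers. A subset $J\subseteq I$ is the same as a choice of subsets $J_i\subseteq[a_i,b_i]$. Since distinct blocks of $I$ are separated by at least one integer not in $I$, the maximal consecutive blocks of $J$ are exactly the union of the maximal blocks of the $J_i$. The formula $\omord(F_J)=\prod 1/(\text{block length}+1)!$ is multiplicative over blocks, so both sides factor as products over $i$. It therefore suffices to treat $I=[1,n]$, a single block of length $n$.

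\textbf{Step 2: translation to compositions.} Think of the $n+1$ values $\mu_0,\mu_1,\ldots,\mu_n$ in a row. A subset $J\subseteq[1,n]$ records which adjacent pairs $\mu_{j-1},\mu_j$ are set equal. The positions not in $J$ cut the row into consecutive runs of sizes $p_1,\ldots,p_r$, with $p_1+\ldots+p_r=n+1$. A run of size $p$ comes from a block of $J$ of length $p-1$ and contributes $1/p!$; runs with $p=1$ contribute $1/1!=1$, consistent with the formula. Moreover $|J|=n+1-r$. So $J\mapsto(p_1,\ldots,p_r)$ is a bijection onto compositions of $N=n+1$, and the identity to prove becomes
\[\sum_{r\geq1}\;\sum_{p_1+\ldots+p_r=N,\;p_i\geq1}(-1)^{N-r}\prod_{i=1}^r\frac{1}{p_i!}=\frac{1}{N!}\,.\]

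\textbf{Step 3: generating function.} Take exponential generating functions in $x$. The inner sum over compositions into $r$ parts is the coefficient of $x^N$ in $(e^x-1)^r$. Summing with the sign $(-1)^r$ gives
\[\sum_{r\geq1}\bigl(-(e^x-1)\bigr)^r=\frac{-(e^x-1)}{e^x}=e^{-x}-1\,,\]
whose $x^N$-coefficient is $(-1)^N/N!$ for $N\geq1$. Multiplying by the remaining sign $(-1)^N$ yields $1/N!$, as required.

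The only step needing care is the bookkeeping in Step 2, namely matching the face-weight formula of Example \ref{ex: faceweightwc} with compositions, including the convention $\mu_0=0$ and the contribution of singleton runs. Steps 1 and 3 are routine.
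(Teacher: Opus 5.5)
Your proof is correct, and it takes a genuinely different route from the paper.

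The paper never reduces to a single block and never uses generating functions. It interprets the weight probabilistically: $\omord(F_I)=|A_I|/(k+1)!=|B_I|/(k+1)!$, where $A_i$ (resp.\ $B_i$) is the set of permutations in $\Sigma_{k+1}$ with an ascent (resp.\ descent) at position $i$. Since $B_I=\Sigma_{k+1}\setminus\bigcup_{i\in I}A_i$, inclusion--exclusion gives $|B_I|=\sum_{J\subseteq I}(-1)^{|J|}|A_J|$ directly. This is exactly the identity you reduced self-duality to, proved for all $I$ at once. In that picture, your multiplicativity over blocks is automatic, because events on disjoint sets of positions are independent.

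Your single-block identity, multiplied by $N!$, reads $\sum(-1)^{N-r}\binom{N}{p_1,\ldots,p_r}=1$. This is the inclusion--exclusion count of the unique permutation of $[N]$ that descends everywhere. So your EGF computation $\sum_{r\geq1}(1-e^x)^r=e^{-x}-1$ is the $I=[n]$ case of the paper's argument, verified analytically.

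What the paper's approach buys is brevity and a general principle: self-dual face-weights arise from families of events with a probability-preserving involution swapping each $A_i$ with its complement. That principle is recorded in the remark after the proof and also covers Example~\ref{ex: faceweighteasy}. What yours buys is a fully self-contained verification. It uses only the explicit block formula from Example~\ref{ex: faceweightwc} and routine formal power series manipulation, with no need to find the permutation model.
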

\begin{proof}
Let $\Sigma_{k+1}$ be the set of permutations of $[k+1]$ and, for each $i\in [k]$, we let
\[A_i=\{\sigma\in \Sigma_{k+1}\colon \sigma(i)<\sigma(i+1)\}\,,\quad B_i=\{\sigma\in \Sigma_{k+1}\colon \sigma(i)>\sigma(i+1)\}\,.\]
For $I\subseteq [k]$ we let $A_I$ be the intersection of $A_i$ for $i\in I$, and similarly define $B_I$. The face-weight of Example \ref{ex: faceweightwc} admits a natural interpretation as
\[\omord(F_I)=\frac{|A_I|}{(k+1)!}=\frac{|B_I|}{(k+1)!}\,.\]
Since $B_i=\Sigma_{k+1}\setminus A_i$ we have by the principle of inclusion-exclusion that
\[\omord(F_I)=\frac{1}{(k+1)!}|B_I|=\frac{1}{(k+1)!}\big|\Sigma_{k+1}\setminus \bigcup_{i\in I} A_i\big|=\frac{1}{(k+1)!}\sum_{J\subseteq I}(-1)^{|J|}|A_J|=(-1)^k \omord^\vee(F_I)\,.\qedhere\]
\end{proof}

\begin{remark}
The proof of Proposition \ref{prop: weightselfdual} illustrates a general way to construct (self-dual) face-weights. Given a sequence of probability events $\{A_i\}_{1\leq i\leq k}$ we may define a face-weight on simplicial cones by $\omega(F_I)=\Pr(\bigcap_{i\in I} A_i)$. Then $(-1)^k \omega^\vee$ is constructed in the same way from the complementary events $\{A_i^c\}_{1\leq i\leq k}$. If there is a probability preserving involution of the underlying probability space which sends $A_i$ to $A_i^c$, then $\omega$ is self-dual. Example \ref{ex: faceweighteasy} can be interpreted in this fashion as well.
\end{remark}

\subsection{Ehrhart reciprocity with face-weights}\label{subsec: ehrhartfaceweight} Let $C\subseteq \BQ^k$ be a cone and $\omega$ a face-weight on $C$. We consider the formal variables ${\bf x}=(x_1, \ldots, x_k)$. Given $\bn=(n_1, \ldots, n_k)\in \BZ^k$ we denote by $\bx^\bn$ the monomial
\[\bx^\bn=x_1^{n_1}\ldots x_k^{n_k}\,.\]
Recall that a face weight $\omega$ defines a constructible function $C\to \BQ$ by setting $\omega(\bn)=\omega(F)$ for $\bn\in F^\circ$. We define a generating series of integral points in $C$, weighted by $\omega$, as follows:
\begin{equation}\label{eq: Zfaceweight}
    Z_{C, \hspace{0.02cm} \omega}(\bx)=\sum_{\bn\in C\cap \BZ^k} \omega(\bn)\bx^{\bn}\in \BC \llbracket \bx^{\pm 1}\rrbracket\,.
\end{equation}
Here, $\BC \llbracket \bx^{\pm 1}\rrbracket=\BC\llbracket x_1^{\pm 1}, \ldots, x_k^{\pm k}\rrbracket$. When $\omega=1$ is the standard weight we simply write $Z_C$. Such generating series are known to be expansions of rational functions in $\bx$; we recall that a formal power series $Z(\bx)\in \BC \llbracket \bx^{\pm 1}\rrbracket$ is an expansion of a rational function $f(\bx)/g(\bx)$ if $g(\bx)Z(\bx)=f(\bx)$. 

Since $C$ is the disjoint union of the relative interiors of its faces we can write
\[Z_{C, \hspace{0.02cm} \omega}(\bx)=\sum_{F\subseteq C} \omega(F)Z_{F^\circ}(\bx)\]
where the sum is over all faces of $C$. In particular $Z_{C, \hspace{0.02cm} \omega}(\bx)$ is also the expansion of a rational function.

\begin{theorem}\label{thm: reciprocityI} We have an equality of rational functions
\[Z_{C, \hspace{0.02cm} \omega}(\bx^{-1})=Z_{C, \hspace{0.02cm} \omega^\vee}(\bx)\,.\]
\end{theorem}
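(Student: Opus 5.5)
The plan is to reduce the statement to classical Stanley reciprocity for the faces of $C$, and then reorganise the resulting sum by Möbius-type bookkeeping over the face poset. Recall the decomposition already observed above,
\[Z_{C, \hspace{0.02cm} \omega}(\bx)=\sum_{F\subseteq C} \omega(F)Z_{F^\circ}(\bx)\,.\]
This is a finite linear combination of expansions of rational functions. It therefore suffices to prove the identity at the level of the rational functions $Z_{F^\circ}$ and $Z_F$ attached to faces.

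The key input is Stanley's reciprocity theorem (see \cite{stanley, BRcomputingcontinuous}). For a pointed rational closed cone $F$, it gives the equality of rational functions
\[Z_{F^\circ}(\bx^{-1})=(-1)^{\dim F}Z_F(\bx)\,.\]
I will apply this to every face $F$ of $C$, each of which is again a rational closed cone. The vertex face $F=\{0\}$, when present, is the trivial case $1=1$. Substituting gives
\[Z_{C, \hspace{0.02cm} \omega}(\bx^{-1})=\sum_{F}(-1)^{\dim F}\omega(F)Z_F(\bx)\,.\]

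Next I expand each closed face as the disjoint union of the relative interiors of its own faces, $Z_F=\sum_{G\subseteq F}Z_{G^\circ}$. The faces of a face $F$ are exactly the faces $G$ of $C$ with $G\subseteq F$. Exchanging the order of summation then yields
\[Z_{C, \hspace{0.02cm} \omega}(\bx^{-1})=\sum_{G}Z_{G^\circ}(\bx)\sum_{F\supseteq G}(-1)^{\dim F}\omega(F)=\sum_G \omega^\vee(G)Z_{G^\circ}(\bx)=Z_{C, \hspace{0.02cm} \omega^\vee}(\bx)\,,\]
which is the claim, directly from Definition \ref{def: faceweight}.

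The main point requiring care is the applicability of Stanley reciprocity, namely pointedness of the cones. If $C$ is not pointed, it contains a line $\BQ v$, and that line lies in every face. I would handle this case by observing that each $Z_{F}$ and $Z_{F^\circ}$ is then annihilated as a rational function: if $v$ is taken integral, the series is invariant under multiplication by $\bx^{v}$, so it equals $0$ as a rational function. Both sides of the identity are then $0$.

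Alternatively, one can simply restrict to pointed cones, which is the only case used later: the wall-crossing cone $0\leq\mu_1\leq\ldots\leq\mu_k$ is simplicial, hence pointed. In that setting one could even verify reciprocity directly. This is done by triangulating into unimodular-up-to-finite-index simplicial cones and using the fundamental parallelepiped expression $Z_{F^\circ}=\big(\sum_{\bn\in\Pi^\circ}\bx^\bn\big)/\prod_i(1-\bx^{v_i})$, together with the bijection $\bn\mapsto\sum v_i-\bn$ between the half-open and the opposite half-open parallelepipeds.
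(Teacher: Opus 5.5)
Your proposal is correct and follows the paper's proof essentially verbatim. You decompose $Z_{C,\omega}$ over relative interiors of faces, apply Stanley reciprocity $Z_{F^\circ}(\bx^{-1})=(-1)^{\dim F}Z_F(\bx)$ facewise, re-expand $Z_F=\sum_{G\subseteq F}Z_{G^\circ}$, and exchange the sums to get $\omega^\vee$; your extra treatment of non-pointed cones (every face contains the lineality line, so each series is annihilated by $1-\bx^{v}$ and is the rational function $0$) correctly covers a case the paper leaves implicit.
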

\begin{proof}
    We deduce this result from Stanley's reciprocity, which is the particular case where $\omega$ is the standard weight (cf. \cite[Theorem 4.3]{BRcomputingcontinuous} and Example \ref{ex: standardweight}). We have:
    \begin{align*}
        Z_{C, \hspace{0.02cm} \omega}(\bx^{-1})&=\sum_{F\subseteq C} \omega(F)Z_{F^\circ}(\bx^{-1})=\sum_{F\subseteq C} \omega(F)(-1)^{\dim F}Z_{F}(\bx)\\
        &=\sum_{F\subseteq C}\sum_{G\subseteq F} \omega(F)(-1)^{\dim F}Z_{G^\circ}(\bx)=\sum_{G\subseteq C}\omega^\vee(G)Z_{G^\circ}(\bx)=Z_{C, \hspace{0.02cm} \omega^\vee}(\bx)\,.\qedhere
    \end{align*}
\end{proof}

\subsection{Quasi-polynomial coefficients}\label{subsec: quasipolynomialehrhart}
We extend now Theorem \ref{thm: reciprocityI} by allowing not only the combinatorial weights $\omega$ but also a quasi-polynomial dependence on $\bn$. Recall that $f\colon \BZ^k\to \BC$ is a quasi-polynomial if for some $N\in \BZ_{\geq 1}$ and every ${\bf a}\in \BZ^k$ the function $\bn\mapsto f(N\bn+{\bf a})$ is a polynomial function in $\bn$; the smallest such integer $N$ is called the period of $f$. Given such a quasi-polynomial $f$ we write
\begin{equation}\label{eq: Zbothweights}
    Z_{C, \hspace{0.02cm} \omega, f}(\bx)=\sum_{\bn\in C\cap \BZ^k} \omega(\bn)f(\bn)\bx^{\bn}\in \BC \llbracket \bx^{\pm 1}\rrbracket\,.
\end{equation}
We define the quasi-polynomial $f^\vee$ by
\[f^\vee(n_1, \ldots, n_k)=f(-n_1, \ldots, -n_k)\,.\]

\begin{theorem}\label{thm: reciprocityII}
    Let $C$ be a cone, $\omega$ a face-weight and $f$ a quasi-polynomial. Then $Z_{C, \hspace{0.02cm} \omega, f}(\bx)$ is the expansion of a rational function. Moreover, we have the equality of rational functions
    \[Z_{C,\hspace{0.02cm} \omega, f}(\bx^{-1})=Z_{C, \hspace{0.02cm} \omega^\vee\hspace{-0.05cm}, f^\vee\hspace{-0.05cm}}(\bx)\,.\]
\end{theorem}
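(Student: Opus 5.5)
The plan is to reduce Theorem \ref{thm: reciprocityII} to Theorem \ref{thm: reciprocityI}, using differential operators to handle the polynomial part and a sublattice decomposition to handle the periodicity.

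First I reduce to the case where $f$ is a monomial on a coset. Let $N$ be the period of $f$. Split $\BZ^k$ into the $N^k$ cosets ${\bf a}+N\BZ^k$. On each coset $f$ agrees with a genuine polynomial $p_{\bf a}$, since a polynomial in $\bn'$ with $\bn={\bf a}+N\bn'$ is also a polynomial in $\bn$. The indicator of a coset can be written with roots of unity:
\[\mathbf 1_{{\bf a}+N\BZ^k}(\bn)=N^{-k}\sum_{\zeta}\zeta^{-{\bf a}}\zeta^{\bn},\]
where the sum runs over $\zeta\in\mu_N^k$. Hence every quasi-polynomial is a finite linear combination of functions $\bn\mapsto \zeta^{\bn}\bn^{\bf e}$ with $\zeta\in\mu_N^k$ and ${\bf e}\in\BZ_{\geq0}^k$. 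Both sides of the claimed identity are linear in $f$. Also $(\zeta^{\bn}\bn^{\bf e})^\vee=\zeta^{-\bn}(-1)^{|{\bf e}|}\bn^{\bf e}$. So it suffices to treat $f(\bn)=\zeta^{\bn}\bn^{\bf e}$.

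For such $f$, substitution and differentiation give
\[Z_{C,\omega,f}(\bx)=\Big(\prod_i \big(x_i\partial_{x_i}\big)^{e_i}Z_{C,\omega}\Big)(\zeta\bx),\]
where $\zeta\bx=(\zeta_1x_1,\ldots,\zeta_kx_k)$. Both operations preserve expansions of rational functions: if $gZ=h$, then $Z(\zeta\bx)$ satisfies the substituted relation, and derivatives satisfy $g^{m+1}\cdot\partial Z=\text{polynomial}$ by induction. This gives rationality, using the rationality of $Z_{C,\omega}$ already established before Theorem \ref{thm: reciprocityI}.

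For reciprocity, let $R(\bx)=Z_{C,\omega}(\bx)$ as a rational function, so that $R(\bx^{-1})=Z_{C,\omega^\vee}(\bx)$ by Theorem \ref{thm: reciprocityI}. Write $D_i=x_i\partial_{x_i}$. The key identity is the chain rule under the inversion $\iota\colon\bx\mapsto\bx^{-1}$, namely $D_i(R\circ\iota)=-(D_iR)\circ\iota$. Similarly, $\zeta$-substitution commutes with $\iota$ up to $\zeta\mapsto\zeta^{-1}$, and with the $D_i$. Then
\[Z_{C,\omega,f}(\bx^{-1})=(D^{\bf e}R)(\zeta\bx^{-1})=(-1)^{|{\bf e}|}D^{\bf e}(R\circ\iota)(\zeta^{-1}\bx)=(-1)^{|{\bf e}|}\big(D^{\bf e}Z_{C,\omega^\vee}\big)(\zeta^{-1}\bx),\]
and the last expression is exactly $Z_{C,\omega^\vee,f^\vee}(\bx)$.

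The main subtlety I expect is bookkeeping about expansions versus rational functions. The identity of Theorem \ref{thm: reciprocityI} is between rational functions, whose expansions live in different directions. Differentiation and $\zeta$-substitution must be checked to commute with "taking the rational function", and they do. For differentiation this is the quotient rule applied to $gZ=h$. For substitution it is immediate. So all manipulations are done at the level of rational functions, and the series statement follows.
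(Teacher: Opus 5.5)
Your proposal is correct and follows the paper's proof essentially step for step. You reduce by linearity to $f(\bn)=\zeta^{\bn}\bn^{\bf e}$, realize $Z_{C,\omega,f}$ as $\prod_i (x_i\partial_{x_i})^{e_i} Z_{C,\omega}$ followed by the substitution $x_i\mapsto \zeta_i x_i$, and combine the sign rule $x_i\partial_{x_i}(R\circ\iota)=-(x_i\partial_{x_i}R)\circ\iota$ with Theorem \ref{thm: reciprocityI}. Your roots-of-unity proof of the decomposition, and your check that differentiation and substitution commute with passing from series to rational functions, supply details the paper leaves implicit.
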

\begin{proof}
Any quasi-polynomial of period $N$ can be expressed as a linear combination of quasi-polynomials of the form
\[f(\bn)=\prod_{i=1}^k n_i^{a_i} \zeta^{b_i n_i}\]
for some non-negative integer $a_i, b_i$, where $\zeta$ is a primitive $N$-th root of unity. So it is enough to prove the result for such $f$. Note that $f^\vee$ is of the same shape, up to a sign:
\[f^\vee (\bn)=(-1)^{\sum_{i=1}^k a_i}\prod_{i=1}^k n_i^{a_i}\zeta^{-b_i n_i}\,.\]

Let $\partial_{i}$ be the differential operator $x_i\frac{\partial }{\partial x_i}$. Then, for $f$ of the form above, we have
\[Z_{C,\hspace{0.02cm}  \omega, f}(\bx)=\partial_{1}^{a_1}\ldots \partial_{k}^{a_k}Z_{C, \hspace{0.02cm} \omega}(\bx)|_{x_i\mapsto \zeta^{b_i}x_i}\,.\]
Here, the notation on the right means that we first differentiate $Z_{C, \hspace{0.02cm} \omega}(\bx)$ and, after differentiation, we replace the variable $x_i$ by $\zeta^{b_i}x_i$.The operators $\partial_{i}$ have the property that they preserve rational functions and for any $Z$ we have
\[\partial_{i} Z(\bx^{-1})=-\partial_{i}Z(\bx)|_{\bx\mapsto \bx^{-1}}\,.\]
Hence
\begin{align*}
Z_{C, \hspace{0.02cm} \omega, f}(\bx^{-1})&=\partial_{1}^{a_1}\ldots \partial_{k}^{a_k}Z_{C, \hspace{0.02cm} \omega}(\bx)|_{x_i\mapsto \zeta^{b_i}x_i^{-1}}=(-1)^{\sum_{i=1}^k a_i}\partial_{1}^{a_1}\ldots \partial_{k}^{a_k}Z_{C, \hspace{0.02cm} \omega}(\bx^{-1})|_{x_i\mapsto \zeta^{-b_i}x_i}\\
&=Z_{C, \hspace{0.02cm} \omega^\vee\hspace{-0.05cm}, f^\vee\hspace{-0.05cm}}(\bx)\,.\qedhere
\end{align*}
\end{proof}

\subsection{Corollaries}\label{subsec: ehrhartcorollaries} We discuss now two very concrete particular cases of Theorem \ref{thm: reciprocityII}. In particular, Corollary \ref{cor: combinatorialsymm} is what we will use in the proof of the rationality of $\PT$ invariants.

\begin{corollary}\label{cor: combinatorialsymmeasy}
Let $f_1, \ldots, f_k\colon \BZ\to \BC$ be quasi-polynomials. Then the generating series
\[Z(q)=\sum_{n_1, \ldots, n_k\geq 0}\,\frac{1}{\#\{i\colon n_i=0\}+1}f_1(n_1)\ldots f_k(n_k) q^{n_1+\ldots+n_k}\]
is the expansion of a rational function.

Moreover, if all $f_i$ are either even or odd, then this rational function satisfies the symmetry
\[Z(q^{-1})=(-1)^a Z(q)\]
where $a$ is the number of even ones.
\end{corollary}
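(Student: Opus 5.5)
Apply Theorem \ref{thm: reciprocityII} to the simplicial cone $C=\{\bn\in\BQ^k\colon n_i\geq 0\}$ with the face-weight $\omega_2(F_I)=\frac{1}{|I|+1}$ of Example \ref{ex: faceweighteasy}, and the quasi-polynomial $f(\bn)=f_1(n_1)\cdots f_k(n_k)$. A product of quasi-polynomials in separate variables is a quasi-polynomial on $\BZ^k$: take $N$ to be the lcm of the periods, so each $n_i\mapsto f_i(Nn_i+a_i)$ is polynomial and hence so is the product. For $\bn\in C\cap\BZ^k$, the face whose relative interior contains $\bn$ is $F_I$ with $I=\{i\colon n_i=0\}$, so $\omega_2(\bn)=\frac{1}{\#\{i\colon n_i=0\}+1}$. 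Therefore
\[Z(q)=Z_{C,\hspace{0.02cm}\omega_2,f}(\bx)\big|_{x_1=\cdots=x_k=q}\,.\]

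For rationality, Theorem \ref{thm: reciprocityII} says $Z_{C,\omega_2,f}(\bx)$ is the expansion of a rational function $P(\bx)/Q(\bx)$, i.e.\ $QZ=P$ as formal series. It remains to check that the specialization $x_i\mapsto q$ is legitimate. Since $C$ lies in the nonnegative orthant, the series is supported on $\BZ_{\geq0}^k$. One can choose $Q$ to be a product of factors $1-\zeta\bx^{\mathbf v}$, where $\mathbf v\geq 0$ are nonzero ray generators and $\zeta$ is a root of unity. This follows from the proof of Theorem \ref{thm: reciprocityII}: the series $Z_{C,\omega_2}$ is a finite sum of terms $Z_{F^\circ}$, whose denominators are products of factors $1-\bx^{\mathbf v}$; the operators $\partial_i$ only raise the powers of these factors; and the substitution $x_i\mapsto\zeta^{b_i}x_i$ introduces the roots of unity. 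After setting $x_i=q$, each factor becomes $1-\zeta q^{|\mathbf v|}$ with $|\mathbf v|>0$, which is nonzero. Because the supports lie in an orthant, the specialization is a ring homomorphism on the relevant power series, so the identity $QZ=P$ specializes to an identity in $\BC\llbracket q\rrbracket$. This shows $Z(q)$ is rational.

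For the symmetry, Theorem \ref{thm: reciprocityII} gives $Z_{C,\omega_2,f}(\bx^{-1})=Z_{C,\omega_2^\vee,f^\vee}(\bx)$ as rational functions. Self-duality (Example \ref{ex: faceweighteasy}) gives $\omega_2^\vee=(-1)^k\omega_2$. If $f_i$ is even or odd, then $f_i(-n)=\epsilon_i f_i(n)$ with $\epsilon_i=1$ for even and $\epsilon_i=-1$ for odd, so $f^\vee=\prod_i\epsilon_i\, f=(-1)^{b}f$, where $b$ is the number of odd $f_i$. Specializing to $x_i=q$ at the level of rational functions, which is valid because the denominators stay nonzero as shown above, we get
\[Z(q^{-1})=(-1)^{k+b}Z(q)=(-1)^{a}Z(q),\]
since $k+b\equiv a+2b\equiv a\pmod 2$.

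The only delicate step is this compatibility of specialization with the identity of rational functions. The identity in Theorem \ref{thm: reciprocityII} is between rational functions, not formal series, so specializing both sides needs the denominators to survive $x_i\mapsto q$. That is guaranteed by the explicit form of the denominators, $1-\zeta\bx^{\mathbf v}$ with $\mathbf v\geq0$ nonzero, and I would verify it by tracking them through the proof of Theorem \ref{thm: reciprocityII} as indicated.
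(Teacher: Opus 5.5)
Your proposal is correct and is the same argument as the paper's: identify $Z(q)$ with $Z_{C,\omega_2,f}$ for the orthant cone and $f(\bn)=\prod_i f_i(n_i)$, then apply Theorem~\ref{thm: reciprocityII} together with $\omega_2^\vee=(-1)^k\omega_2$ and $f^\vee=(-1)^{k-a}f$. Your extra check that setting all $x_i=q$ is compatible with the identity of rational functions (the denominators $1-\zeta\bx^{\mathbf v}$ stay nonzero) fills in a step the paper leaves implicit.
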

\begin{proof}
    The generating series above is $Z_{C, \hspace{0.02cm} \omega_2, f}$ where $f(\bn)=\prod_{i=1}^k f_i(n_i)$ and $C, \omega_2$ are as in Example \ref{ex: faceweighteasy}. We have $\omega_2^\vee=(-1)^k \omega_2$ and $f^\vee=(-1)^{k-a}f$, so we are done.\qedhere
\end{proof}

For our last corollary recall Definition \ref{def: omord} of $\omord$.

\begin{corollary}\label{cor: combinatorialsymm}
Let $r_1, \ldots, r_k$ be positive integers and let $f_1, \ldots, f_k\colon \BZ\to \BC$ be quasi-polynomials. Then the generating series
\[Z(q)=\sum_{0\leq \frac{n_1}{r_1}\leq \ldots\leq \frac{n_k}{r_k}}\omord\Big(\frac{n_1}{r_1}, \ldots, \frac{n_k}{r_k}\Big)f_1(n_1)\ldots f_k(n_k) q^{n_1+\ldots+n_k}\]
is the expansion of a rational function.

Moreover, if all $f_i$ are either even or odd, then this rational function satisfies the symmetry
\[Z(q^{-1})=(-1)^a Z(q)\]
where $a$ is the number of even ones.
\end{corollary}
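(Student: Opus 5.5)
We will realize $Z(q)$ as a specialization of a series $Z_{C,\omega,f}(\bx)$ to which Theorem \ref{thm: reciprocityII} applies, exactly as in the proof of Corollary \ref{cor: combinatorialsymmeasy}. Let $C\subseteq \BQ^k$ be the rational simplicial cone cut out by
\[0\leq \frac{y_1}{r_1}\leq \frac{y_2}{r_2}\leq \ldots\leq \frac{y_k}{r_k}.\]
The linear isomorphism $y_i=r_i\mu_i$ identifies $C$ with the cone $0\leq \mu_1\leq\ldots\leq\mu_k$ of Example \ref{ex: faceweightwc}. It therefore carries faces to faces and preserves their dimensions. Let $\omega$ be the pullback of the face-weight $\omord$ along this isomorphism, so that $\omega(\bn)=\omord(n_1/r_1,\ldots,n_k/r_k)$ for $\bn\in C\cap\BZ^k$. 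The dual weight $\omega^\vee$ depends only on the face poset and the dimensions. Hence Proposition \ref{prop: weightselfdual} transfers directly and gives $\omega^\vee=(-1)^k\omega$.

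Next set $f(\bn)=\prod_i f_i(n_i)$, which is a quasi-polynomial on $\BZ^k$ with period the lcm of the periods of the $f_i$. By Theorem \ref{thm: reciprocityII}, $Z_{C,\omega,f}(\bx)$ is the expansion of a rational function $P(\bx)/Q(\bx)$, and
\[Z_{C,\omega,f}(\bx^{-1})=Z_{C,\omega^\vee,f^\vee}(\bx)=(-1)^k(-1)^{k-a}Z_{C,\omega,f}(\bx).\]
Here we used $f^\vee=(-1)^{k-a}f$, since each odd $f_i$ contributes a sign. The total sign is $(-1)^{2k-a}=(-1)^a$. Finally, $Z(q)$ is obtained from $Z_{C,\omega,f}(\bx)$ by setting $x_i=q$ for all $i$.

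The main obstacle is to make the specialization $x_i\mapsto q$ legitimate at the level of both formal series and rational functions. On $C$ every coordinate is nonnegative, because $n_1\geq 0$ and $n_i\geq r_i n_1/r_1\geq 0$ for each $i$. Moreover, $n_i\leq r_i n_k/r_k$, so only finitely many $\bn\in C\cap\BZ^k$ have a given value of $\sum_i n_i$. Thus $Z(q)$ is a well-defined power series in $q$.

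To pass to rational functions, I would use the explicit form from Stanley's theory. $Z_{C}$ and each $Z_{F^\circ}$ are rational with denominators $\prod_j(1-\bx^{v_j})$, where the $v_j$ are the primitive ray generators of $C$. These generators have nonnegative, nonzero entries, so after the substitution the denominators become $\prod_j(1-q^{|v_j|})\neq 0$. The operators $\partial_i$ and the substitutions $x_i\mapsto\zeta^{b_i}x_i$ used in the proof of Theorem \ref{thm: reciprocityII} preserve denominators of this shape, up to roots of unity, so they remain nonvanishing after specialization. Hence $Q(q,\ldots,q)\neq 0$, the identity $Q\cdot Z_{C,\omega,f}=P$ specializes to $Q(q,\ldots,q)Z(q)=P(q,\ldots,q)$, and $Z(q)$ is rational.

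Since $(\bx^{-1})|_{x_i=q}=q^{-1}$, the rational-function identity specializes to $Z(q^{-1})=(-1)^aZ(q)$.
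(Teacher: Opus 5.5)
Your proposal is correct and is essentially the paper's proof: apply Theorem~\ref{thm: reciprocityII} to the cone $0\leq \frac{n_1}{r_1}\leq\ldots\leq\frac{n_k}{r_k}$ with the self-dual face-weight $\omord$ from Proposition~\ref{prop: weightselfdual} and $f=\prod_i f_i$, then set all $x_i=q$. Your additional check that this diagonal specialization is legitimate (nonnegative, nonzero ray generators, so the denominators $1-\zeta^{c}q^{|v_j|}$ stay nonzero after substitution) is a detail the paper leaves implicit, and it is handled correctly.
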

\begin{proof}
    Same as the proof of Corollary \ref{cor: combinatorialsymmeasy} using the rational cone defined by the inequalities $0\leq \frac{n_1}{r_1}\leq \ldots\leq \frac{n_k}{r_k}$ and Proposition \ref{prop: weightselfdual}.\qedhere
\end{proof}

\begin{remark}\label{rmk: combinatorialpoles}
The simplicial cone considered in Corollary \ref{cor: combinatorialsymm} is generated over $\BQ$ by the vectors $(0,\ldots, 0, r_{i}, \ldots, r_k)$ for $1\leq i\leq k$, which can be used to control the poles appearing using \cite[Theorem 3.5]{BRcomputingcontinuous}. Suppose further that the period of $f_i$ divides $r_i$. Then the poles of $Z(q)$ are all $N$-th roots of unity where
\[N\in \left\{\sum_{j=i}^k r_j\colon 1\leq i\leq k\right\}\,.\]
\end{remark}

\section{Descendents and the vertex algebra}
\label{sec: descendentsVA}
In this section we review the wall-crossing setup from \cite{joyce}, following a fairly concrete approach in terms of the descendent algebra as in \cite{blm}. 

\subsection{The descendent algebra and the realization homomorphism}
\label{subsec: descendents}

Given a smooth projective $X$, we use the descendent algebra $\BD^X$ as a tool to keep track of tautological classes. We mostly follow the notation from \cite[Section 2]{blm} (except we do not use the Hodge shift) and refer the reader there for more details. For this section we let $d=\dim(X)$, which will be~3 throughout the rest of the paper.

\begin{definition}\label{def: descendentalgebra}
    The descendent algebra $\BD^X$ is the  (super)commutative $\BQ$-algebra generated by symbols $\ch_k(\gamma)$, for $k\geq 0$ and $\gamma\in H^\ast(X)$, modulo linearity relations
    \[\ch_k(\lambda_1\gamma_1+\lambda_2\gamma_2)=\lambda_1\ch_k(\gamma_1)+\lambda_2\ch_k(\gamma_2)\,.\]
\end{definition}

If $M$ is a moduli space/stack of perfect complexes (such as the moduli of stable pairs) on $X$ with a universal complex $\CF$ on $M\times X$ there exists a realization map
\[\xi_\CF\colon \BD^X\to H^\ast(X)\]
which sends
\[\ch_k(\gamma)\mapsto p_\ast\big(\ch_k(\CF)q^\ast \gamma\big)\in H^{2k+\deg(\gamma)-2d}(M)\,.\]
Above, $p, q$ are the projections of $M\times X$ onto $M$ and $X$, respectively. We denote by $K(X)$ the numerical Grothendieck group of $X$, i.e. the image 
\[K(X)=\textup{im}\big(K_0(D^b(X)\xrightarrow{\ch}H^\ast(X)\big)\,.\]
Given $\alpha\in K(X)$ let $\BD^X_\alpha$ be the quotient of $\BD^X$ by the relations 
\[\ch_k(\gamma)=\begin{cases}
    0 & \textup{if }2k+\deg(\gamma)<2d\\
    \int_{X}\alpha\cdot \gamma  &\textup{if }2k+\deg(\gamma)=2d
\end{cases}\]
If the objects of $M$ have Chern character $\alpha$ then the realization map factors through $\BD^X_\alpha\to H^\ast(M)$. We will refer to $\alpha\in K(X)$ as a topological type of objects of $M$.

\subsubsection{Weight 0 descendents and normalization}
\label{subsubsec: weight0}
Moduli spaces of sheaves often do not have a canonical universal sheaf. If $\CF$ is a universal sheaf, then $\CF\otimes p^\ast L$ is also a universal sheaf for any line bundle $L$ on $M$. Weight 0 descendents are the descendents which are not affected by this ambiguity. Formally, we define the subalgebra of weight 0 descendents by
\[\BD^X_\inv=\ker(\bR_{-1}\colon \BD^X\to \BD^X)\quad\textup{ and }\quad \BD^X_{\inv, \alpha}=\ker(\bR_{-1}\colon \BD^X_\alpha\to \BD^X_\alpha)\]
where $\bR_{-1}$ is a derivation defined on generators by 
\begin{equation}
    \label{eq: R-1}\bR_{-1}(\ch_k(\gamma))=\ch_{k-1}(\gamma)\,.
\end{equation}
Any moduli space of semistable sheaves or complexes of topological type $\alpha$ that does not contain strictly semistable objects defines a canonical realization map
\[\xi\colon \BD^X_{\inv,\alpha}\to H^\ast(M)\,.\]
See \cite[Section 2.4]{blm} for details. Extending this realization map to $\BD^X_\alpha$ corresponds, roughly speaking, to choosing a universal sheaf $\CF$. It is discussed in \cite[Section 2.5]{blm} and \cite[Section 1.2.1]{klmp} how to choose a lift by normalizing a certain class. If $\delta\in H^{2j}(X)$ is such that $\int_X \alpha\cdot \delta\neq 0$ then there exists a unique extension of $\xi$ to $\BD^X_\alpha$ which sends $\ch_{d-j+1}(\delta)$ to~0; indeed, this extension is defined as the composition $\xi\circ \eta_\delta$, where $\eta_\delta$ is as in \cite[Remark 2.14]{blm}. We call this the $\delta$-normalized extension.

\begin{example}\label{ex: ptnormalizedPT}
    Let $\CI=[\CO_{P_{\beta, m}\times X}\to \CF]$ be the universal stable pair on $P_{\beta, m}\times X$. Since $\CF$ is a sheaf supported in codimension 2, the realization map $\xi_\CI$ sends
    \[\ch_0(\pt)\mapsto -1\quad \textup{ and }\quad \ch_1(\pt)\mapsto 0\,.\]
    Therefore, $\xi_\CI$ is the $\pt$-normalized extension of $\xi$.
\end{example}

\subsection{Wall-crossing vertex algebra}
\label{subsec: wcVA}
Let $\FM_X$ be the moduli stack of perfect complexes on~$X$ defined in \cite{TV07}. Joyce defines in \cite{joyce} a vertex algebra structure on $H_\ast(\FM_X)$ which is used to write down wall-crossing formulas. For the sake of concreteness, we will mostly use the more explicit definition using the descendent algebra, as in \cite[Section 3.2]{parabolic}. As explained in loc. cit., there exists a natural vertex algebra structure on
\[\VX\coloneqq \bigoplus_{\alpha\in K(X)}(\BD_\alpha^X)^\vee\,,\]
where $(\BD_\alpha^X)^\vee$ stands for the graded dual of $\BD_\alpha^X$, where the grading is the cohomological grading. We write $\bV_\alpha$ for the summand $(\BD_\alpha^X)^\vee$. Dualizing the realization map $\BD^X_\alpha\to H^\ast(\FM_\alpha)$, where $\FM_\alpha\subseteq \FM_X$ is the union of connected components of complexes with Chern character $\alpha$, produces a map
\[H_\ast(\FM_X)\to \VX\,,\]
which is an homomorphism of vertex algebras. For some $X$ (e.g. $X$ toric), this homomorphism is actually an isomorphism \cite[Theorem 1.1]{Gr19}.

\subsubsection{Classes of moduli spaces}
\label{subsubsec: classesmoduliVA}
Let $M$ be a moduli space parametrizing semistable perfect complexes in $X$ with topological type $\alpha$. Assume $M$ has no strictly semistable objects, that $M$ is proper and that $M$ has a virtual fundamental class $[M]^\vir$. If $\CF$ is a universal complex on $M\times X$, then the functional 
\[\BD_\alpha \ni D\mapsto \int_{[M]^\vir} \xi_\CF(D)\]
defines an element of $\bV_\alpha\subseteq \bV_X$.

If we are not given a universal complex $\CF$, we have only a functional
\[\BD_{\inv,\alpha} \ni D\mapsto \int_{[M]^\vir} \xi(D)\,.\]
So $M$ determines an element of
\[\VXhat\coloneqq \bigoplus_{\alpha\in K(X)}(\BD_{\inv,\alpha}^X)^\vee\,.\]
Equivalently, $\VXhat$ is the quotient of $\VX$ by the image of the dual operator $\bR_{-1}^\vee$. Joyce's wall-crossing theory provides a way to associate an element of $\VXhat$ to moduli spaces of sheaves even when they contain strictly semistable objects, and we call such elements \textit{generalized homological invariants}.

\subsubsection{Lie bracket}\label{subsubsec: liebracket}
Wall-crossing formulas are expressed using the $0$-th product in $\VX$, which we will denote by
\[[-,-]\coloneqq \VX\otimes \VX\to \VX\,.\]
The $0$-th product satisfies the Jacobi identity, but it is not a Lie bracket on the nose since it is not anti-symmetric. We explain now the very concrete definition of $[-,-]$:

For $s\in \BZ$ define 
\[\Delta_s\colon \BD^X\to \BD^X\otimes \BD^X\]
by the formula
\begin{equation}
    \Delta_s=\sum_{j\geq 0}c_{s+j+1}(\Theta)\circ \left(\frac{\bR_{-1}^j}{j!}\otimes \id\right)\circ \Sigma^\ast\end{equation}
where
\begin{enumerate}
    \item $\Sigma^\ast\colon \BD^X\to \BD^X \otimes \BD^X$ is the algebra homomorphism defined on generators by
\[\Sigma^\ast(\ch_k(\gamma))=\ch_k(\gamma)\otimes 1+1\otimes \ch_k(\gamma)\,.\]
Under the realization map $\BD^X\to H^\ast(\FM_X)$, $\Sigma^\ast$ corresponds to pullback along the direct sum map $\Sigma\colon \FM_X\times \FM_X\to \FM_X$, hence the notation.
\item $\bR_{-1}$ is the derivation described in Section \ref{subsubsec: weight0}. Under the realization map, the operator $e^{z\bR_{-1}}$ corresponds to pullback along the $B\BG_m$ action $B\BG_m\times \FM_X\to \FM_X$, where $z$ is regarded as the generator of $H^2(B\BG_m)$.
\item $c_k(\Theta)\in \BD^X\otimes \BD^X$ are explicitly defined by the formula
\begin{equation}\label{eq: chernclassesTheta}
    \sum_{k\geq 0}c_k(\Theta)z^k=\exp\left(\sum_{\substack{a,b,\ell \geq 0\\\ell\equiv d \mod 2}}2(-1)^b(a+b+\ell-d-1)!z^{a+b+\ell-d}\ch_a\ch_b\big(\Delta_\ast \td_\ell(X)\big)\right)\end{equation}
where $\Delta_\ast \td_\ell(X)$ is the pushforward of the Todd class along the diagonal $\Delta\colon X\to X\times X$ and 
\[\ch_a\ch_b(\gamma^L\otimes \gamma^R)\coloneqq \ch_a(\gamma^L)\otimes \ch_b(\gamma^R)\in \BD^X\otimes \BD^X\,.\]
By Grothendieck--Riemann--Roch and Newton's identities (see, for example, the proof of Theorem 4.7 in \cite{blm}), the realization of the class $c_k(\Theta)$ in $H^\ast(\FM_X\times \FM_X)$ is the $k$-th Chern class of the complex
\[\Theta\coloneqq \Ext_{12}^\vee+\Ext_{21}\]
on $\FM_X\times \FM_X$, hence the notation.
\end{enumerate}

By construction, there exists a natural pairing
\[\langle -, -\rangle\colon \VX\otimes \BD^X\to \BQ\,.\]
If $A\in \bV_\alpha, B\in \bV_\beta$ and $D\in \BD^X$ then $[A,B]\in \bV_{\alpha+\beta}$ is determined by
\[\big\langle [A, B], D\big\rangle=(-1)^{\chi(\alpha, \beta)}\langle A\otimes B, \Delta_{\chi_\sym(\alpha, \beta)}D\rangle\]
where
\[\chi(\alpha, \beta)=\int_{X}\alpha^\vee\cdot \beta\cdot \td(X)\quad \textup{ and }\quad \chi_\sym(\alpha, \beta)=\chi(\alpha, \beta)+\chi(\beta, \alpha)\,.\]
Above, $\alpha^\vee=\sum_{j=0}^d (-1)^j \alpha_j$ where $\alpha_j$ is the component of $\alpha$ in $H^{2j}(X)$.

Borcherds shows in \cite{borcherds} that the 0-th product descends to a well-defined Lie bracket 
\[[-,-]\colon \VXhat \otimes\VXhat\to\VXhat\,.\]
It is observed in \cite[Lemma 3.12]{blm} that the $0$-th product also descends to a well-defined map
\begin{equation} \label{eq: partialift}[-,-]\colon \VXhat\otimes \VX\to \VX\,.\end{equation}
We will use the same notation for all these 3 maps, and loosely refer to them as the Lie bracket, despite technically only the one fully defined on $\VXhat$ being a Lie bracket.

There are counterparts of these Lie brackets if we replace $\VX$ by $H_\ast(\FM_X)$ and $\VXhat$ by $H_\ast(\FM_X^\rig)$, where $\FM_X^\rig$ is the rigidification of $\FM_X$ with respect to the $B\BG_m$ above, and we have a commutative square
\begin{center}
    \begin{tikzcd}
        H_\ast(\FM_X)\arrow[r] \arrow[d, two heads]&\VX \arrow[d, two heads]\\
        H_\ast(\FM_X^\rig)\arrow[r] &\VXhat
    \end{tikzcd}
\end{center}
where the top arrow is a homomorphism of vertex algebras, which in particular means that it is compatible with $[-,-]$, and the bottom arrow is a homomorphism of Lie algebras.

\begin{remark}
    The partial lift \eqref{eq: partialift} is dual to 
   \[ \Delta_{\chi_{\sym}(\alpha, \beta)}\colon \BD_{\alpha+\beta}^X\to \BD_{\inv, \alpha}^X\otimes \BD_{\beta}^X\,.\]
   The fact that  $\Delta_{\chi_{\sym}(\alpha, \beta)}$ lands in $\BD_{\inv, \alpha}^X\otimes \BD_{\beta}^X$ is equivalent to the identity 
   \[(\bR_{-1}\otimes \id)\circ \Delta_{\chi_{\sym}(\alpha, \beta)}=0\,.\]
   This can be argued directly, without invoking the fact that $\VX$ is a vertex algebra, by using \cite[Lemma A.2]{klmp}, which implies
   \[(\bR_{-1}\otimes \id)c_j(\Theta)=(\chi_\sym(\alpha, \beta)-j+1)c_{j-1}(\Theta)\,.\]
\end{remark}

\subsection{Twisting by line bundle on $X$}
\label{subsec: twistinglinebundle}
We review here the effect on descendents of twisting a moduli space of sheaves/complexes by a line bundle $H\in \Pic(X)$, following \cite[Section~2.7]{blm}. 

\begin{definition}\label{def: twistinglinebundle}
    Given a line bundle $H\in \Pic(X)$ we define the algebra homomorphism $T_H^\ast\colon \BD^X\to \BD^X$ by its value on generators:
    \[T_H^\ast(\ch_k(\gamma))=\sum_{j=0}^d \frac{1}{j!}\ch_{k-j}(\gamma \cdot c_1(H)^j)\,.\]
\end{definition}

The meaning and notation of $T_H^\ast$ is explained by the following observation. Let $T_H\colon \FM_X\to \FM_X$ be the automorphism given by tensoring $-\otimes H$. Then the diagram
\begin{center}
\begin{tikzcd}
\BD^X\arrow[r] \arrow[d, "T_H^\ast"]& H^\ast(\FM_X)\arrow[d, "T_H^\ast"]\\
\BD^X\arrow[r] & H^\ast(\FM_X)
\end{tikzcd}
\end{center}
commutes. It is easy to see that $T_H^\ast$ descends to an isomorphism $\BD^X_{\alpha'}\to \BD^X_\alpha$ or $\BD^X_{\inv,\alpha'}\to \BD^X_{\inv, \alpha}$ where $\alpha'=\alpha\cdot e^{c_1(H)}$.

We will also denote by $(T_H)_\ast$ the dual operators
\[(T_H)_\ast\colon H_\ast(\FM_X)\to H_\ast(\FM_X)\,,\quad (T_H)_\ast\colon \VX\to \VX\,,\quad (T_H)_\ast\colon \VXhat\to \VXhat\,.\]
Although we will not need it, it is not hard to show that $(T_H)_\ast$ is an automorphism of vertex/Lie algebras.

\subsection{Duality and $\BZ/2$ grading}\label{subsec: duality}
We define a $\BZ/2$-grading on the descendent algebra that plays a role in the formulation of the symmetry of the $\PT$ generating series. 

\begin{definition}\label{def: deltaZ2grading}
Let $\delta^\ast\colon \BD^X\to \BD^X$ be the algebra homomorphism sending 
\[\delta^\ast(\ch_k(\gamma))=(-1)^k\ch_k(\gamma)\,.\]
We say $D$ is even ($|D|=0$ in $\BZ/2$) or odd ($|D|=1$ in $\BZ/2$) if
\[\delta^\ast(D)=(-1)^{|D|} D\,.\]
\end{definition}
In other words, the even/odd part of $\BD^X$ is spanned by products $\prod_{i=1}^n \ch_{k_i}(\gamma_i)$ with $\sum_{i=1}^l k_i$ being even/odd. Note that this is not the $\BZ/2$ grading induced by the cohomological grading on $\BD^X$.

There is a natural interpretation of the meaning of $\delta^\ast$ in terms of the realization map. Let $\delta\colon \FM_X\to \FM_X$ send a complex $F$ to $F^\vee[2]$, where $(-)^\vee=R\mathcal Hom(-, \CO_X)$ is the derived dual. It follows from the fact that
\[(\delta\times \id)^\ast \CF=\CF^\vee[2]\]
in $\FM_X\times X$ that the diagram
\begin{center}
    \begin{tikzcd}
    \BD^X\arrow[d,"\delta^\ast"]\arrow[r] & H^\ast(\FM_X)\arrow[d, "\delta^\ast"]\\
    \BD^X \arrow[r] & H^\ast(\FM_X)
    \end{tikzcd}
\end{center}
commutes, where $\delta^\ast$ on the left is the operator formally defined in Definition \ref{def: deltaZ2grading} and the one on the right is the pullback along the map $\delta$. 

It is easy to see that $\delta^\ast$ descends to $\BD^X_{\alpha}\to \BD^X_{\alpha^\vee}$. Moreover, since $\bR_{-1}\circ \delta=-\delta\circ \bR_{-1}$, $\delta^\ast$ preserves the kernel of $\bR_{-1}$, and thus induces a morphism $\delta^\ast\colon \BD^X_{\inv, \alpha}\to \BD^X_{\inv, \alpha^\vee}$.

We will also denote by 
\[\delta_\ast \colon H_\ast(\FM_X)\to H_\ast(\FM_X)\,,\quad \delta_\ast \colon \VX\to \VX \,,\quad \delta_\ast \colon \VXhat\to \VXhat\]
the dual morphisms.

The next lemma establishes the compatibility between this $\BZ/2$ grading and wall-crossing formulas; it will be used in the proof of the $q\leftrightarrow q^{-1}$ symmetry part of Conjecture \ref{conj: rationality}. We endow $\BD^X\otimes \BD^X$ with the tensor $\BZ/2$ grading.

\begin{lemma}\label{lem: parityWC}
The operator $\Delta_s\colon \BD^X\otimes \BD^X\to \BD^X$ has parity\footnote{By this, we mean that it preserves the even/odd subspaces if $s+1\equiv 0 \mod 2$ and it sends even to odd and odd to even if $s+1\equiv 1 \mod 2$.} $s+1$.
\end{lemma}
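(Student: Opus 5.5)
Note first that $\Delta_s$ maps $\BD^X\to\BD^X\otimes\BD^X$, despite the direction written in the statement. The plan is to check the parity of each of the three ingredients of $\Delta_s$ separately and then multiply. The relevant grading is the one from Definition \ref{def: deltaZ2grading}: the parity of $\prod\ch_{k_i}(\gamma_i)$ is $\sum k_i \bmod 2$. On $\BD^X\otimes\BD^X$ we use the tensor grading, i.e. the parity determined by $\delta^\ast\otimes\delta^\ast$. Equivalently, we want to show
\[(\delta^\ast\otimes\delta^\ast)\circ\Delta_s=(-1)^{s+1}\Delta_s\circ\delta^\ast.\]

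\textbf{Step 1: the operators $\Sigma^\ast$ and $\bR_{-1}$.} The map $\Sigma^\ast$ is an algebra homomorphism. On generators it sends $\ch_k(\gamma)$ to $\ch_k(\gamma)\otimes1+1\otimes\ch_k(\gamma)$, which is homogeneous of parity $k$. Hence $\Sigma^\ast$ preserves parity.

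The derivation $\bR_{-1}$ lowers a single index $k$ by one, so it flips parity. Indeed, the relation $\bR_{-1}\circ\delta^\ast=-\delta^\ast\circ\bR_{-1}$ was already noted in Section \ref{subsec: duality}. It follows that $\bR_{-1}^j\otimes\id$ has parity $j$.

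\textbf{Step 2: the classes $c_k(\Theta)$.} We show that multiplication by $c_k(\Theta)$ has parity $k$, i.e. that $c_k(\Theta)$ is homogeneous of parity $k$ in the tensor grading. Since the generating series $\sum_k c_k(\Theta)z^k$ is the exponential of a series in $z$, it suffices to check one thing: in the exponent, each coefficient of $z^{n}$ has parity $n \bmod 2$. A series with this property is fixed by the substitution $z\mapsto -z$ combined with $\delta^\ast\otimes\delta^\ast$, and that property is preserved under exponentials.

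The exponent term $\ch_a\ch_b(\Delta_\ast\td_\ell(X))$ has parity $a+b$ and carries the power $z^{a+b+\ell-d}$. The sum is restricted to $\ell\equiv d\pmod 2$, so $a+b+\ell-d\equiv a+b \pmod 2$. This is exactly the required matching, and it is the only point where that parity constraint on $\ell$ enters.

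\textbf{Step 3: combining.} The $j$-th summand of $\Delta_s$ is $c_{s+j+1}(\Theta)\cdot(\bR_{-1}^j/j!\otimes\id)\circ\Sigma^\ast$. By Steps 1 and 2 it has parity
\[(s+j+1)+j+0\equiv s+1 \pmod 2.\]
This is independent of $j$, so $\Delta_s$ has parity $s+1$.

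The only step needing care is Step 2, namely the bookkeeping showing that the exponential formula \eqref{eq: chernclassesTheta} is homogeneous in the combined grading. Everything else is immediate from the definitions.
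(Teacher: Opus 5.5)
Your proof is correct and is essentially the paper's argument: $\Sigma^\ast$ has parity $0$ and $\bR_{-1}$ has parity $1$, and the constraint $\ell\equiv d \bmod 2$ in \eqref{eq: chernclassesTheta} gives $(\delta^\ast\otimes\delta^\ast)\sum_k c_k(\Theta)z^k=\sum_k c_k(\Theta)(-z)^k$, so each summand of $\Delta_s$ has parity $(s+j+1)+j\equiv s+1$. You are also right that the source and target of $\Delta_s$ are swapped in the statement as written.
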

\begin{proof}
From the definition it is straightforward that $\bR_{-1}$ has parity 1 and $\Sigma^\ast$ has parity 0. Moreover, it follows from \eqref{eq: chernclassesTheta} that 
\[\sum_{k\geq 0}\delta^\ast c_k(\Theta)z^k=\sum_{k\geq 0}c_k(\Theta)(-z)^k\]
(note that the sum specifies $\ell\equiv d \mod 2$), so $c_k(\Theta)$ has parity $k$. The claim now follows from the definition of $\Delta_s$. \qedhere
\end{proof}
\begin{remark}
A more geometric way to understand the parity of $c_k(\Theta)$, after taking geometric realization, is to observe that the isomorphism
\[\mathrm{RHom}_X(F_1^\vee[2], F_2^\vee[2])\simeq \mathrm{RHom}_X(F_2, F_1) \]
means that 
\[\delta^\ast \Ext_{12}=\Ext_{21}\,,\quad \delta^\ast \Ext_{21}=\Ext_{12}\]
and hence $\delta^\ast \Theta=\Theta^\vee$. One may think of \cite[Lemma 5.7]{bridgelandDTPT} as the Calabi--Yau counterpart to Lemma \ref{lem: parityWC}.
\end{remark}

\section{Wall-crossing between $\PT$ and $\sL$}
\label{sec: wallcrossing}
The main geometric input needed for the proof of our result is the wall-crossing path that Toda used to prove the Calabi--Yau case of the rationality and symmetry conjecture \cite{todaPTrationality}. In this section we recall Toda's stability condition and its properties. We then define $\sL$ invariants which are ``self-dual'' and are related to $\PT$ invariants via a wall-crossing formula.

\subsection{Toda's stability condition}
\label{subsec: todastability}
Let $\CA$ be the heart of $D^b(X)$ obtained by tilting $\Coh(X)$ as follows:
\[\CA=\langle \Coh_{\leq 1}(X), \Coh_{\geq 2}(X)[1]\rangle\,.\]
Let $\CB'\subseteq \CA$ be the full subcategory of $\CA$ extension-generated by 0-dimensional sheaves and by shifted sheaves $F[1]$ where $F$ is a pure 2-dimensional sheaf, and let $\CB\subseteq \CA$ be the subcategory of objects $E\in \CA$ such that 
\[\Hom(E', E)=0\textup{ for all }E'\in \CB'\,.\] Then $\langle \CB', \CB\rangle=\CA$ forms a torsion pair -- this is $\langle \CA^p_1,\CA^p_{1/2}\rangle$, in the notation of \cite{todaPTrationality, todalimit}, see \cite[Lemma 2.16]{todalimit}. The category $\CB$ is closed under subobjects but not under quotients. Two useful facts about $\CB$ is that it contains $\CO_X[1]$ and
\[\CB\cap \Coh_{\leq 1}(X)=\{\textup{pure 1-dim sheaves}\}.\]

If $E, F$ are objects of $\CB$ we say that a morphism $E\to F$ is a strict monomorphism (resp. epimorphism) if it is so in $\CA$ and the cokernel (resp. kernel) is also an object of $\CB$. Since $\CB$ is closed for subobjects, the condition of the kernel being in $\CB$ for a strict epimorphism is redundant.

Throughout, we will fix a polarization of $X$ and let $\mu_H$ be the slope of a 1-dimensional sheaf~$F$:
\begin{equation}\label{eq: slope1dimension}
    \mu_H(F)=\frac{\ch_3(F)}{\ch_2(F)\cdot H}
\end{equation}

\begin{definition}\label{def: todastability}
    We say that an object $E$ of $\CB$ with $\ch(E)=(-1,0,\beta, m)$ is $\mu_s$-semistable if 
    \begin{enumerate}
        \item For any pure 1-dimensional sheaf $F$ and strict epimorphism $E\twoheadrightarrow F$ in $\CB$ we have
        \[\mu_H(F)\geq s\,.\]
        \item For any pure 1-dimensional sheaf $F$ and strict monomorphism $F\hookrightarrow E$ in $\CB$ we have
        \[\mu_H(F)\leq s\,.\]
    \end{enumerate}
\end{definition}

A different way to formulate this notion of stability is the following. Let $\mu_s(F)=\mu_H(F)$ for a pure 1-dimensional sheaf, and let $\mu_s(E)=s$ for an object in $\CB$ of type $(-1, 0, \beta, m)$. Then $\mu_s$ behaves like a weak stability condition in the sense of \cite[Definition 4.1]{JO06III}. If we have a short exact sequence
\[0\to E'\to E\to E''\to 0\]
in $\CB$ with $E$ of type $(-1,0,\beta, m)$ then either $E'$ or $E''$ is a 1-dimensional sheaf by \cite[Lemma 3.16]{todaPTrationality}, and the other one also has topological type of the form $(-1, 0, \beta', m')$. So $\mu_s$-semistability is equivalent to $\mu_s(E')\leq \mu_s(E'')$ for any such short exact sequence. In the same way, a pure 1-dimensional sheaf is $\mu_s$-semistable if and only if it is $\mu_H$-semistable. 

\begin{remark}
Yet another way of formulating this definition is as a so-called ``limit stability'', defined via some central charge, as in \cite{todaPTrationality}. In this approach, the definitions above become theorems. 
\end{remark}

We let $\FL_{\beta, m}^s\subseteq \FM_X$ be the moduli stack of $\mu_s$-semistable objects in $\CB$ of topological type $(-1, 0, \beta, m)$, where $\FM_X$ is the Toën--Vaquié stack of perfect complexes on $X$. The following theorem summarizes several properties proved by Toda.

\begin{theorem}[{\cite{todaPTrationality}}]\label{thm: propertiesLspaces}\,
Let $X$ be a smooth projective 3-fold.
\begin{enumerate}
\item The stack $\FL_{\beta, m}^s$ is finite type and it is open in $\FM_X$.

\item There exists a constant $C_{\beta, m}$ such that the $\mu_s$-semistable objects with trivial determinant for $s>C_{\beta, m}$ are precisely Pandharipande--Thomas pairs. Assuming that $H^{>0}(\CO_X)=0$ we have
\[\FL_{\beta, m}^s\simeq P_{\beta, m}\times B\BG_m\,.\]

\item Given fixed $\beta$ and $s$, there are only finitely many $m$ such that the stack $\FL_{\beta, m}^s$ is non-empty.

\item An object $E\in \CB$ of topological type $(-1, 0, \beta, m)$ is $\mu_s$-semistable if and only if $\delta(E)$ is $\mu_{-s}$-semistable. Hence, we have an isomorphism of stacks
\[\delta\colon \FL_{\beta, m}^s\simeq \FL_{\beta, -m}^{-s}\,.\]
\end{enumerate}
\end{theorem}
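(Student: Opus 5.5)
This theorem is attributed to Toda, and the natural plan is to reduce each item to the structural facts about $\CB$ and $\mu_s$ recalled above, following \cite{todaPTrationality, todalimit}. The key input throughout is the observation quoted from \cite[Lemma 3.16]{todaPTrationality}: in a short exact sequence in $\CB$ whose middle term has type $(-1,0,\beta,m)$, one outer term is a pure 1-dimensional sheaf $F$ with $\ch_2(F)\leq\beta$, and the other has type $(-1,0,\beta',m')$ with $\beta'\leq \beta$. Since only finitely many effective classes satisfy $\beta'\leq\beta$, every destabilizing object lives in a bounded family of curve classes.

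For (1), I would first establish boundedness. Write $E\in\CB$ of rank $-1$ via its cohomology sheaves: $\mathcal H^{-1}(E)$ is a rank one torsion-free sheaf and $\mathcal H^0(E)$ has dimension $\le 1$. Membership in $\CB$ forbids 0-dimensional subobjects, and $\mu_s$-semistability bounds the slopes of 1-dimensional quotients and subobjects. Together these control the double dual and the torsion, so the semistable objects form a bounded family. Openness then follows in two steps. First, $\CB$ is open in $\FM_X$, since membership in $\CA$ and the vanishing of $\Hom(\CB',-)$ are open conditions by semicontinuity, using a bounded family of test objects. Second, the semistability conditions are open because the possible destabilizing 1-dimensional sheaves form bounded families, which allows a Quot-type properness argument. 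I expect this boundedness and openness step to be the main obstacle, and I would follow \cite[Section 3]{todaPTrationality} closely there.

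For (2), I would take $s$ larger than the maximal slope of any pure 1-dimensional quotient that can occur, which is bounded by the boundedness in (1). Condition (1) of Definition \ref{def: todastability} then forces the absence of 1-dimensional quotients. This says that $\mathcal H^{-1}(E)$ is a line bundle (trivial when the determinant is trivial) and that $\mathcal H^0(E)$ is 0-dimensional, the latter being the cokernel of a map to a pure sheaf. Conversely, condition (2) with large $s$ holds automatically for a PT pair, and one checks that a PT pair $I$ lies in $\CB$. When $H^{>0}(\CO_X)=0$, $\Pic$ is discrete and determinants are fixed in each component. The stack is then $P_{\beta,m}$ times the $B\BG_m$ coming from scalar automorphisms, since stable pairs are simple.

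For (3), I would bound $m$ from below and from above. From below, I would use the filtration by destabilizing pieces: after passing to $\mu_{s'}$ with $s'\gg0$, which is a finite chain of walls by the finiteness of the $\beta'\leq\beta$, one bounds $m$ below by the bound for PT pairs together with the slope bounds. Boundedness also gives the upper bound. Alternatively, the upper bound follows from the lower one via (4).

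For (4), I would show that $\delta$ maps $\CB$ to itself for objects of this type, using Toda's description of $\CB$ as $\CA^p_{1/2}$ and the duality for perverse coherent hearts. Then $\delta$ exchanges strict monomorphisms $F\hookrightarrow E$ with strict epimorphisms $\delta E\twoheadrightarrow \delta F$, where $\delta F=\mathcal{E}xt^2(F,\CO_X)$ is pure 1-dimensional with $\mu_H(\delta F)=-\mu_H(F)$. The two conditions of Definition \ref{def: todastability} are therefore swapped, with $s$ replaced by $-s$. Since $\ch_3$ changes sign under $\delta$, this gives the isomorphism $\FL^s_{\beta,m}\simeq\FL^{-s}_{\beta,-m}$.
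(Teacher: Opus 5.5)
There is a genuine gap in your argument for (2). The absence of pure 1-dimensional quotients does not make $\mathcal{H}^{-1}(E)$ a line bundle. For a stable pair $E=[\CO_X\to F]$ (in degrees $-1,0$) one has $\mathcal{H}^{-1}(E)=I_C$, the ideal of the scheme-theoretic support of $F$. If $\mathcal{H}^{-1}(E)$ were a numerically trivial line bundle and $\mathcal{H}^0(E)$ were 0-dimensional, you would get $\ch_2(E)=0$.

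What condition (1) of Definition~\ref{def: todastability} actually gives is only that $\mathcal{H}^0(E)$ is 0-dimensional. You see this by applying it to the strict epimorphism $E\twoheadrightarrow P\coloneqq \mathcal{H}^0(E)/T$, where $T$ is the maximal 0-dimensional subsheaf. The missing step is the derived-category characterization of stable pairs from \cite{PT}. Suppose $E\in\CB$ has $\mathcal{H}^{-1}(E)=I_C$ and $\mathcal{H}^0(E)=Q$ 0-dimensional. Since $\Ext^1(Q,\CO_X)=\Ext^2(Q,\CO_X)=0$, one gets $\Hom(E,\CO_X[1])\cong\Hom(I_C,\CO_X)$, so the inclusion $I_C\hookrightarrow\CO_X$ lifts uniquely to $E\to\CO_X[1]$. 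Its fibre is a sheaf $F$ with $E\simeq[\CO_X\to F]$ and cokernel $Q$. Finally, $\Hom(\CO_x,F)\hookrightarrow\Hom(\CO_x,E)=0$ is exactly the purity of $F$. So purity comes from the defining condition of $\CB$, not from stability, and your argument for (2) never uses it.

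Your choice of $C_{\beta,m}$ is also not justified. Boundedness in (1) holds for a fixed $s$, so it cannot give a constant valid for all $s>C_{\beta,m}$. Moreover, over all objects of $\CB$ of type $(-1,0,\beta,m)$ the slopes of pure 1-dimensional quotients are unbounded: twist the summands $G,G'$ of $\CO_X[1]\oplus G\oplus G'$ by $\CO_X(-nH)$ and $\CO_X(nH)$, with $\ch_2(G)\cdot H=\ch_2(G')\cdot H$.

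The same characterization repairs this. The kernel $K$ of $E\twoheadrightarrow P$ lies in $\CB$, with $\mathcal{H}^{-1}(K)=\mathcal{H}^{-1}(E)$ and $\mathcal{H}^0(K)=T$. So, up to a numerically trivial twist, $K$ is a stable pair of some class $\beta'\le\beta$, hence $\ch_3(K)\ge N_\beta$, the minimum of the PT lower bounds over the finitely many $\beta'\le\beta$. This gives $\mu_H(P)\le\max(m-N_\beta,0)$, so $C_{\beta,m}=\max(m-N_\beta,0)$ works. It also gives $m\ge N_\beta+\min(0,s\,\beta\cdot H)$ for every $\mu_s$-semistable $E$, which is the lower bound in (3) without passing through walls.

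The upper bound in (3) then has to come from (4), as in your alternative. Your remark ``boundedness also gives the upper bound'' does not work, since boundedness for each fixed $m$ says nothing about which $m$ occur. Your argument for (4) is Toda's and is fine.

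For comparison, the paper reproves none of this. It identifies $\mu_s$-semistability with Toda's $\mu_{-sH/2+iH}$-limit semistability \cite[Proposition 3.14]{todaPTrationality}. It then quotes Proposition 3.17, Theorem 3.21 and Lemmas 4.4 and 4.3 there, noting that Toda's openness is inside Lieblich's stack of gluable complexes, which is itself open in $\FM_X$. Deferring (1) to Toda, as you do, requires exactly that dictionary.
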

\begin{proof}
Unless stated otherwise, the theorem numbers below refer to \cite{todaPTrationality}. The $\mu_s$-semistable objects in $\CB$ of type $(-1,0,\beta, m)$ are the same as the $\mu_{-sH/2+iH}$-limit semistable objects by Proposition 3.14. The statement (1) is Proposition 3.17; note that loc. cit. states that $\FL_{\beta, m}^s$ is open inside the stack of gluable complexes \cite{lieblich}, which is itself open in $\FM_X$ by the proof of \cite[Corollary 3.21]{TV07}. Statement (2) is Theorem 3.21, (3) is Lemma 4.4 and (4) is Lemma~4.3.
\end{proof}

Since $\FM_X$ has a natural structure of a derived stack, the open embedding $\FL_{\beta, m}^s\subseteq \FM_X$ endows the former with a derived structure as well. Under the assumption that $H^{>0}(\CO_X)=0$ the standard obstruction theory of $P_{\beta, m}$ \cite{PT} matches the obstruction theory coming from the derived enhancement induced by the open embedding $P_{\beta, m}\subseteq \FM_X^\rig$, so the isomorphism in (2) is also an isomorphism of derived stacks.

\begin{remark}
Theorem \ref{thm: propertiesLspaces}.2 says in particular that the notion of $\mu_s$-stability does not depend on the polarization $H$ for $s\gg 0$. The same is true for $s=0$, since $\mu_H(F)\geq 0$ is simply equivalent to $\ch_3(F)\geq 0$, which does not involve $H$. For general values of $s$, however, the stack $\FL_{\beta, m}^s$ does depend on $H$, but we will suppress $H$ from the notation.
\end{remark}

\subsection{Generalized homological invariants and wall-crossing}\label{subsec: generalizedinvariants}

We assume now that $X$ satisfies the conditions of Theorem \ref{thm: main}. Let $M_{\beta, m}$ be the moduli of $\mu_H$-semistable 1-dimensional sheaves on $X$. When $M_{\beta, m}$ has no strictly semistable objects, this moduli space is projective and quasi-smooth (or, for less derived readers, it admits a 2-term perfect obstruction theory), and therefore it defines an element $\sM_{\beta, m}\in H_\ast(\FM_X^\rig)$, or $\sM_{\beta, m}\in \VXhat$, as explained in Section~\ref{subsubsec: classesmoduliVA}. We recall that the element $\sM_{\beta, m}\in \VXhat$ essentially carries the information of how to integrate (weight 0) tautological classes against the virtual fundamental class of $M_{\beta, m}$.

In \cite[Theorem 7.68]{joyce} Joyce defines classes $\sM_{\beta, m}$ even in the presence of semistable sheaves. These are the ``generalized homological invariants'' referred to in the statement of Theorem~\ref{thm: wcintro}. 

Note that if $s$ is not in the discrete set 
\[\left\{\frac{m}{\beta'\cdot H}\colon  m\in \BZ\,,\, 0<\beta'\leq \beta\right\}\]
then $\FL_{\beta, m}^s$ does not have strictly semistable objects. As we will show in Section \ref{sec: technicalstacks}, when $\FL_{\beta, m}^s$ does not contain strictly semistable objects, its good moduli space $L_{\beta, m}^s$ is proper and quasi-smooth; therefore we can also integrate tautological classes against $[L_{\beta, m}^s]^\vir$ in that setting.

The setup of \cite{joyce} does not give us a way to construct generalized homological invariants $\sL_{\beta, m}^s$ in the presence of strictly semistables, since we do not know how to construct a framing functor in this setting. In \cite{KM} the authors construct ``generalized $K$-theoretic invariants'' without assuming the existence of framings. Since here we want (co)homological invariants, we will formally define $\sL_{\beta, m}^s$ by imposing the expected wall-crossing formula and then we will use the technology of \cite{KM} to show the desired properties of these invariants. Concretely, we define $\sL_{\beta, m}^s$ by
\begin{equation}\label{eq: Linvs}
    \sL_{\beta, m}^s\coloneqq \sum_{\substack{\beta_0+\ldots+\beta_k=\beta\\
    m_0+\ldots+m_k=m}}\tilde U(-; \mu_\infty, \mu_s) \cdot\big[\mathsf{M}_{\beta_k, m_k},\big[\ldots,\big[\mathsf{M}_{\beta_1, m_1},\mathsf{\PT}_{\beta_0, m_0}\big]\big]\ldots \big]\big]\in \VX
    \end{equation}
where the coefficients $\tilde U(-; \mu_\infty, \mu_s)$ are the combinatorial coefficients from \cite[Section 3.2]{joyce} and where $\mu_\infty$ denotes $\mu_{t}$ for $t\gg 0$; we will describe these coefficients explicitly when $s=0$ in Section \ref{subsec: wccoefficients}. The classes $\PT_{\beta_0, m_0}\in \VX$ in the formula correspond to integration against $[P_{\beta_0, m_0}]^\vir$ using the universal stable pair, cf. \eqref{eq: PTfunctional}. Note that in this formula we are using the partial lift of the Lie bracket \eqref{eq: partialift}. Note that \eqref{eq: Linvs} can also be interpreted as defining a class in $H_\ast(\FM_X)$.  The next theorem justifies calling these ``generalized homological invariants''.

\begin{theorem}\label{thm: Linvariants}
    Assume that $X$ satisfies the hypothesis of Theorem \ref{thm: main}. If $s$ is such that $\FL_{\beta, m}^s$ does not have strictly semistable objects then the class $\sL_{\beta, m}\in \VX$ defined by \eqref{eq: Linvs} agrees with integration against the virtual fundamental class, i.e.
    \[\langle \sL^s_{\beta, m}, D\rangle=\int_{[L_{\beta, m}^s]^\vir}\xi_\CI(D)\]
    for any $D\in \BD^X_{(-1, 0, \beta, m)}$, where $\xi_\CI$ is the $\pt$-normalized realization map to $H^\ast(L_{\beta, m}^s)$.
\end{theorem}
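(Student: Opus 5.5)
We prove this by induction along Toda's path of stability conditions, walking from $s=+\infty$ down to the given value $s$. For $t\gg 0$ there is nothing to prove: by Theorem \ref{thm: propertiesLspaces}(2), $\FL^t_{\beta,m}\simeq P_{\beta,m}\times B\BG_m$ as derived stacks, and the coefficients $\tilde U(-;\mu_\infty,\mu_\infty)$ vanish except for the trivial term. Hence \eqref{eq: Linvs} reduces to $\PT_{\beta,m}$, which is integration against $[P_{\beta,m}]^\vir$ with the universal pair. By Example \ref{ex: ptnormalizedPT}, this universal pair realizes the $\pt$-normalized extension.

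The set of walls $\{m'/(\beta'\cdot H)\}$ meeting the relevant classes is finite, because only finitely many $(\beta',m')$ with $0<\beta'\le\beta$ occur nonemptily (Theorem \ref{thm: propertiesLspaces}(3)). We therefore cross one wall $s_0$ at a time, passing from $s_0+\epsilon$ to $s_0-\epsilon$. Joyce's coefficients satisfy the composition law $\tilde U(-;\mu_\infty,\mu_s)=\sum \tilde U(-;\mu_\infty,\mu_{s'})\tilde U(-;\mu_{s'},\mu_s)$. Combined with the Jacobi identity, this shows that the classes defined by \eqref{eq: Linvs} satisfy the one-wall formula relating $\sL^{s_0-\epsilon}$ to $\sL^{s_0+\epsilon}$ through brackets with $\sM_{\beta_i,m_i}$ of slope $s_0$. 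It therefore suffices to prove the following: if $\FL^{s_0\pm\epsilon}_{\beta,m}$ have no strictly semistables, then the virtual-class functionals $[L^{s_0\pm\epsilon}]^\vir$ satisfy the same one-wall formula in $\VX$.

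For that geometric statement we use the results of Section \ref{sec: technicalstacks}: proper good moduli spaces (Theorem \ref{thm: goodmoduli}), quasi-smoothness (Theorem \ref{thm: quasismooth}, which is where positivity enters), and pseudo $\Theta$-stratifications (Theorem \ref{thm: Thetastratification}). These are exactly the hypotheses of the topological $K$-theory version of the wall-crossing theorem of \cite[Appendix A]{KM}. That theorem gives a wall-crossing formula for classes in topological $K$-homology of $\FM_X$. Two features make it usable here. First, it has no framing functor, so it applies directly to $\FL^s$. Second, it produces the generalized invariants $\sL^{s_0}$ at the wall intrinsically. We then apply the Chern character (Blanc's $K^{top}$ of $X$ is $H^\ast(X)$) together with a Riemann--Roch comparison. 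This should turn the $K$-theoretic formula into the homological one in $H_\ast(\FM_X)$, with the Lie bracket given by the $0$-th product and the same coefficients. Pushing forward to $\VX$ then yields the desired identity.

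The main obstacle is this $K$-to-homology translation. We must check that the $K$-theoretic brackets of \cite{KM} correspond, after a twisted Chern character / virtual Riemann--Roch, to Joyce's homological brackets. We must also check that the $K$-theoretic invariants at the wall, and the classes $\sM$, agree with Joyce's $\sM_{\beta,m}$. Our first attempt would be to compare both sides with Joyce's classes on a framed auxiliary problem where both theories apply, namely pairs for $\sM$. The last step is to match normalizations. Since the classes carry $B\BG_m$-weight, the lift to $\VX$ is fixed by $\pt$-normalization, and the partial bracket \eqref{eq: partialift} preserves the condition that $\ch_1(\pt)\mapsto 0$, which is inherited from the $\PT$ classes. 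Hence the resulting functional is $D\mapsto\int_{[L^s_{\beta,m}]^\vir}\xi_\CI(D)$, as claimed.
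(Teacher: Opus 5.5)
There is a genuine gap. Your architecture matches the paper's: \cite{KM}'s framing-free $K^{\textup{top}}$ wall-crossing, fed by the results of Section~\ref{sec: technicalstacks}, then a comparison with homology, then $\pt$-normalization. Your final normalization step is essentially the paper's. But the step that carries all the content is left as ``this should turn the $K$-theoretic formula into the homological one,'' and the mechanism you describe is not how this works. The $K$-theoretic formula lives in $K_\ast^{\textup{top}}(\FM_X^\rig)$ and uses the commutator of the associative algebra $\BK_{\textup{top}}(X)$. Blanc's identification $K^{\textup{top}}\simeq H^\ast$ is for $X$, not for the stack, and you do not get Joyce's $0$-th product by applying a Chern character to that identity. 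In particular, one does not obtain an identity in $H_\ast(\FM_X)$ ``with the same coefficients.'' The paper explicitly remarks that its argument gives the statement only in $\VX$, at the level of tautological integrals.

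The missing idea is the notion of a \emph{homological lift} \cite[Definition 8.2]{KM}: a class in $H_\ast(\FM_X^\rig)$ related to a class in $\BK_{\textup{top}}(X)$ by a Riemann--Roch type formula. The paper uses four facts about lifts.
\begin{enumerate}
\item Joyce's $\sM_{\beta,m}$ are lifts of $\sM^K_{\beta,m}$ \cite[Theorem 8.8]{KM}; this is roughly what your framed-auxiliary-problem idea would reproduce.
\item Virtual classes of spaces with no strictly semistables, such as $\PT_{\beta,m}$ and $[L^s_{\beta,m}]^\vir$, lift the corresponding $K$-classes \cite[Lemma 8.4(4)]{KM}.
\item Iterated brackets of lifts, with the same $\tilde U$ coefficients, lift the corresponding iterated commutators \cite[Theorem 8.6]{KM}.
\item Two lifts of the same $K$-class have the same integrals of weight-$0$ tautological classes \cite[Proposition A.3]{KM}. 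This is adapted to $\FM_X^\rig$ using that the $\BG_m$-gerbe is trivial on rank $\pm1$ components.
\end{enumerate}
So one never transports an identity. Instead, one shows that $\sL^s_{\beta,m}$, defined by \eqref{eq: Linvs}, and $[L^s_{\beta,m}]^\vir$ are two lifts of the single class $\sL^{s,K}_{\beta,m}$, hence agree in $\VXhat$. Then $\pt$-normalization upgrades this to $\VX$, using $\ch_0(\pt)\cap\sM_{\beta,m}=\ch_1(\pt)\cap\sM_{\beta,m}=0$ and \cite[Proposition 1.13]{klmp}.

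With this in hand, your wall-by-wall induction is unnecessary. \cite[Theorem 5.15]{KM} is already a global $\mu_\infty\to\mu_s$ formula with the same coefficients as \eqref{eq: Linvs}, and $\sL^{s,K}_{\beta,m}$ is defined for every $s$, so the comparison is done in one step. That one step also covers the case where $s$ is a wall value for smaller classes but $\FL^s_{\beta,m}$ has no strictly semistables; your induction would have to treat that case separately.
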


We will give a proof of this theorem in Section \ref{subsec: Kthyproof} using \cite{KM}. A consequence of Theorem~\ref{thm: Linvariants} is that $\sL_{\beta, m}^s=0$ if $\FL_{\beta, m}^s$ is empty.

Recall from Theorem \ref{thm: propertiesLspaces}.4 that $\delta$ sends $\mu_s$-semistable objects to $\mu_{-s}$-semistable objects. We use the same $K$-theoretic techniques to show that this is reflected at the level of generalized homological invariants. Recall the definitions of $(T_H)_\ast$ and $\delta_\ast$ from Sections \ref{subsec: twistinglinebundle} and \ref{subsec: duality}.

\begin{proposition}\label{prop: symminvariants}
    The classes $\sM_{\beta, m}\in \VXhat$ and $\sL_{\beta, m}\in \VX$  satisfy 
    \[(T_H)_\ast \sM_{\beta, m}=\sM_{\beta, m+H\cdot \beta}\, , \quad \delta_\ast \sM_{\beta, m}=\sM_{\beta, -m} \quad\textup{and}\quad \delta_\ast \sL_{\beta, m}^s=\sL_{\beta, -m}^{-s}\, . \]
\end{proposition}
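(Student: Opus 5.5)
The plan is to treat the three identities separately. The first two concern only Joyce's classes $\sM_{\beta,m}$, while the third rests on the definition \eqref{eq: Linvs} together with the $K$-theoretic machinery of \cite{KM}.

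\textbf{Step 1: twisting and duality for $\sM$.} Tensoring by $H$ is an exact autoequivalence of $\Coh_{\leq 1}(X)$. It preserves $\mu_H$-semistability and shifts the slope by exactly $1$, since $\ch_3(F\otimes H)=\ch_3(F)+H\cdot\beta$. It also preserves the derived structure, i.e.\ the obstruction theory, and it induces an automorphism of the whole wall-crossing setup in \cite{joyce}: the pair data, the framing functors and the vertex algebra. Joyce's invariants are functorial under such auto-equivalences, because they are characterized by the same wall-crossing relations with pair invariants. Hence $(T_H)_\ast\sM_{\beta,m}=\sM_{\beta,m+H\cdot\beta}$. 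For duality, on pure $1$-dimensional sheaves $F\mapsto F^\vee[2]=\mathcal{E}xt^2(F,\CO_X)$ is again a pure $1$-dimensional sheaf with $\ch=(0,0,\beta,-m)$. This is an anti-equivalence that reverses inequalities of slopes, so it preserves $\mu_H$-semistability. When there are no strictly semistables, the identity $\delta_\ast\sM_{\beta,m}=\sM_{\beta,-m}$ is immediate from the compatibility of realization maps in Section \ref{subsec: duality} and from $\delta^\ast$ of the obstruction theory. In general, I would not reprove Joyce's construction with duals. Instead I would use the $K$-theoretic (topological $K$-theory) invariants of \cite{KM}, which are intrinsic to the stack and its $\Theta$-stratification and require no framing, so they are manifestly equivariant under any isomorphism of stacks such as $\delta\colon \mathfrak{M}_{\beta,m}\simeq\mathfrak{M}_{\beta,-m}$. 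Comparing with Joyce's classes via the Chern character, as in \cite[Appendix A]{KM}, gives the homological statement.

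\textbf{Step 2: duality for $\sL$.} By Theorem \ref{thm: propertiesLspaces}.4, $\delta$ gives an isomorphism of stacks $\FL^s_{\beta,m}\simeq \FL^{-s}_{\beta,-m}$, compatible with derived structures since it is the restriction of the automorphism $\delta$ of $\FM_X$. The key point is to identify $\sL^s_{\beta,m}$, defined by \eqref{eq: Linvs}, with the intrinsic invariant attached by \cite{KM} to $\FL^s_{\beta,m}$. To do this I would use the technical inputs of Section \ref{sec: technicalstacks}: proper good moduli spaces (Theorem \ref{thm: goodmoduli}), quasi-smoothness (Theorem \ref{thm: quasismooth}) and the pseudo $\Theta$-stratification (Theorem \ref{thm: Thetastratification}). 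With these, \cite{KM} produces classes $\sL^{s,\mathrm{KM}}_{\beta,m}$ satisfying the same wall-crossing formula as in \eqref{eq: Linvs}, relating $\mu_\infty$ (stable pairs, Theorem \ref{thm: propertiesLspaces}.2) to $\mu_s$. Since the coefficients $\tilde U$ and the $\sM$ classes agree, the two definitions coincide. This identification is also what yields Theorem \ref{thm: Linvariants}, and both statements should come out of the same argument. Once $\sL^s=\sL^{s,\mathrm{KM}}$, functoriality of the \cite{KM} construction under the stack isomorphism $\delta$ gives $\delta_\ast\sL^s_{\beta,m}=\sL^{-s}_{\beta,-m}$. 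One has to check two compatibilities. First, $\delta$ carries the $\Theta$-stratification for $\mu_s$ to the one for $\mu_{-s}$, which holds because $\delta$ exchanges strict sub- and quotient objects and reverses the slope comparison. Second, $\delta$ respects the $\pt$-normalization, since $\delta^\ast$ fixes $\ch_0(\pt)$ and negates $\ch_1(\pt)$, so the condition $\ch_1(\pt)=0$ is preserved.

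\textbf{Main obstacle.} The hard part is the matching in Step 2: showing that the formally defined \eqref{eq: Linvs} agrees with the framing-free \cite{KM} invariants, in topological rather than algebraic $K$-theory. This requires checking that the \cite{KM} wall-crossing along the path $\mu_t$, $t\in[s,\infty)$, has exactly Joyce's $\tilde U$ coefficients. It also requires checking that the Lie bracket on the $K$-theoretic side maps under the Chern character to the bracket on $\VX$, including the partial lift \eqref{eq: partialift}. I would first try to reduce this to the general comparison theorem of \cite[Appendix A]{KM}, passing through finitely many walls one at a time.
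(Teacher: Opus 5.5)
Your proposal follows essentially the paper's argument. The ingredients are the same: Toda's isomorphisms $T_H$ and $\delta$ of the semistable stacks; functoriality of the framing-free $K$-theoretic invariants under them \cite[Proposition 4.10]{KM}; the fact that $\sM_{\beta,m}$, $\PT_{\beta,m}$ and hence (by \cite[Theorems 5.15, 8.6]{KM}) the classes $\sL^s_{\beta,m}$ of \eqref{eq: Linvs} are homological lifts of their $K$-theoretic counterparts, so that the relevant classes agree in $\VXhat$ by \cite[Proposition A.3]{KM}; and preservation of $\pt$-normalization by $\delta_\ast$, which upgrades the $\sL$ identity from $\VXhat$ to $\VX$.

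There are two small differences. First, the paper treats $T_H$ through \cite{KM} as well, rather than through Joyce's framing functors; this avoids having to check that the transported framing is admissible and that the invariants are independent of it. Second, the step you flag as the main obstacle should be stated not as an equality $\sL^s=\sL^{s,K}$ but as $\sL^s_{\beta,m}$ being a homological lift of $\sL^{s,K}_{\beta,m}$ in the Riemann--Roch sense of \cite[Definition 8.2]{KM}. The paper imports that statement directly from \cite[Theorem 8.6]{KM}.
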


\begin{remark}
 The statements of Theorem \ref{thm: Linvariants} and Proposition \ref{prop: symminvariants} also make sense when we regard $\bL$ classes as lying in $H_\ast(\FM_X)$, rather than in $\VX$. While those statements are surely true, our current proof only works in $\VX$ since we will crucially use \cite[Proposition 8.13]{KM}, which says that $K$-theoretic invariants determine integrals of tautological classes.

 In many important cases (e.g., that of $\mathbb P^3$), however, one can immediately deduce this result from the fact that $H^{\ast}(\FM_X)$ is tautologically generated. This latter statement is a theorem of Gross, cf. \cite{Gr19}.
\end{remark}

\subsection{$K$-theoretic invariants and proofs of Theorem \ref{thm: Linvariants} and Proposition \ref{prop: symminvariants}}\label{subsec: Kthyproof}
Let us very briefly summarize the main ideas of \cite{KM} that we will need. The authors define there an associative algebra structure on
\[\BK_{\textup{top}}(X)\coloneqq K_\ast^{\textup{top}}(\FM_X^{\rig})\,\]
where $K_\ast^{\textup{top}}(-)$ is the topological $K$-homology of a stack, as defined in Appendix A of loc. cit. This should be thought as being some kind of (topological) $K$-theoretic analog of $H_\ast(\FM_X^\rig)$ or $\VXhat$, and \cite[Definition 4.8, Appendix A]{KM} constructs generalized $K$-theoretic invariants
\[\sM_{\beta, m}^K, \sL_{\beta, m}^{s, K}\in \BK_{\textup{top}}(X)\,,\]
which are the counterparts to $\sM_{\beta, m}, \sL_{\beta, m}^{s}$, provided that certain conditions are satisfied. In \cite{KM}, $\sL_{\beta, m}^{s, K}$ would have been denoted by $\varepsilon_{(-1, 0, \beta, m)}^{\mu_s, K_{\textup{top}}}$. Unlike in Joyce's work, the existence of framing functors is not necessary, and these conditions are satisfied for us with minor modifications; we explain now briefly what needs to be done, and dedicate Section \ref{sec: technicalstacks} to proving these technical results. 
\begin{enumerate}
    \item Although the category $\CB$ is not abelian, the moduli stack $\FM_{\CB}$ of objects in $\CB$ satisfies the conclusion of Definition/Proposition 2.1 in \cite{KM} since it is open in $\FM_X$ by \cite[Proposition 3.17]{todaPTrationality} and preserved by the direct sum map and the $B\BG_m$ action on $\FM_X$. In particular $\BK_{\textup{top}}(\CB)=K_\ast^{\textup{top}}(\FM_{\CB}^\rig)$ is defined. We will use the natural algebra homomorphism $\BK_{\textup{top}}(\CB)\to \BK_{\textup{top}}(X)$ defined as the pushforward along $\FM_{\CB}^\rig\to \FM_X^\rig$ to regard all our invariants in $\BK_{\textup{top}}(X)$.
    \item For Assumption 2.11 we consider the set of permissible classes $C(\CB)_{\textup{pe}}$ to consist of classes with Chern character either $(0,0,\beta, m)$ or $(-1,0,\beta, m)$. Parts (4) and (5) of Assumption 2.11 follow from \cite[Lemma 3.16]{todaPTrationality}.
    \item The fact that $\FL_{\beta, m}^s$ is open in $\FM_\CB$ and finite type is shown in \cite[Proposition 3.17]{todaPTrationality}.
    \item We prove in Theorem \ref{thm: quasismooth} that $\FL_{\beta, m}^s$ are quasi-smooth under the hypothesis of Theorem \ref{thm: main}.
    \item We show in Theorem \ref{thm: Thetastratification} that $\FL_{\beta, m}^s$ is the semistable loci of a pseudo $\Theta$-stratification whose strata correspond to the possible $\mu_s$-Harder--Narasimhan types of objects in $\CB$. 
    \item The stability conditions $\mu_s$ are not equivalent to additive stability conditions, at least in an obvious way. The requirement that stability conditions are additive is imposed in \cite{KM} with the sole purpose of invoking the results of \cite{AHLH} that guarantee the existence of proper good moduli spaces for moduli of semistable objects in abelian categories. Instead, we will prove that $\FL_{\beta, m}^s$ admits a proper good moduli space (cf. Theorem \ref{thm: goodmoduli}) by verifying directly the conditions necessary for the main theorem of \cite{AHLH} to apply.
\end{enumerate}

We define in \cite[Definition 8.2]{KM} what it means for a class in $H_\ast(\FM_X^\rig)$ to be a homological lift of a class in $\BK_{\textup{top}}(X)$; roughly speaking, it means that the two classes are related by a Riemann--Roch type formula. We claim that $\sM_{\beta, m}$, $\sL_{\beta, m}^s$ are homological lifts of $\sM_{\beta, m}^K$, $\sL_{\beta, m}^{s, K}$, respectively; in this statement we are considering the $\sL$ invariants in $H_\ast(\FM_X^\rig)$ via the quotient $H_\ast(\FM_X)\to H_\ast(\FM_X^\rig)$. For the $\sM$ classes this is a direct consequence of Theorem 8.8 in loc. cit., since we have defined $\sM_{\beta, m}$ via Joyce's framing functor construction. Since stable pairs do not have strictly semistables, \cite[Lemma 8.4 (4)]{KM} applies and says that $\PT_{\beta, m}$ is a homological lift of~$\PT_{\beta, m}^K$. The wall-crossing for the generalized $K$-theoretic invariants \cite[Theorem 5.15]{KM} shows that $\sL_{\beta, m}^{s,K}$ can be written in terms of $\sM_{\beta, m}^K$, $\PT_{\beta, m}^K$ by a formula that is entirely analogous to \eqref{eq: Linvs}, except that it uses the commutator in~$\BK_{\textup{top}}(X)$ instead of the Lie bracket on $H_\ast(\FM_X^\rig)$. Finally, by comparing the two wall-crossing formulas we conclude using \cite[Theorem 8.6]{KM} that $\sL_{\beta, m}^{s}$ is a homological lift of $\sL_{\beta, m}^{s,K}$.

\begin{proof}[Proof of Theorem \ref{thm: Linvariants}]
We assume that $\FL_{\beta, m}^s$ has no strictly semistables. We will first argue that the equality holds when $D$ is weight 0. We have shown before that $\sL_{\beta, m}^{s}$ is a homological lift of $\sL_{\beta, m}^{s,K}$. By using again \cite[Lemma 8.4.4]{KM} we also find that the image of $[L_{\beta, m}^s]^\vir$ in $H_\ast(\FM_X^\rig)$ is a homological lift of the same class $\sL_{\beta, m}^{s,K}\in K_\ast^{\textup{top}}(\FM_X^{\rig})$. Then \cite[Proposition A.3]{KM} implies that the integrals of tautological classes in $H^\ast(\FM^\rig)$ against $\sL_{\beta, m}^{s}$ and $[L_{\beta, m}^s]^\vir$ agree, which is to say that their images in $\VXhat$ are the same.\footnote{Note that the statement of \cite[Proposition A.3]{KM} is for the stack $\FM_X$, not $\FM_X^\rig$, but it is not hard to adapt the proof. In our case, we may use the fact that the $\BG_m$ gerbe $\FM_X\to \FM_X^\rig$ is trivial when restricted to connected components of complexes with rank $\pm 1$. Picking a trivialization induces a universal complex on $\FM_X^\rig\times X$, which we can use as in the proof of \cite[Proposition 8.13]{KM}.}

We claim now that $\sL_{\beta, m}^s$ is a $\pt$-normalized class, in the sense that $\ch_1(\pt)\cap \sL_{\beta, m}^s$. Indeed, it is easy to see that $\PT_{\beta, m}$ being $\pt$-normalized (cf. Example \ref{ex: ptnormalizedPT}) and $\ch_0(\pt)\cap \sM_{\beta, m}=\ch_1(\pt)\cap \sM_{\beta, m}=0$ imply that any iterated bracket 
\[\big[\mathsf{M}_{\beta_k, m_k},\big[\ldots,\big[\mathsf{M}_{\beta_1, m_1},\mathsf{\PT}_{\beta_0, m_0}\big]\ldots \big]\big]\]
is also $\pt$-normalized (see also the proof of \cite[Lemma 5.11.a)]{blm}). It follows for example from \cite[Proposition 1.13]{klmp} that two $\pt$-normalized functionals that agree on weight 0 descendents must agree on every descendent.\qedhere
\end{proof}

\begin{proof}[Proof of Proposition \ref{prop: symminvariants}]
Toda shows in \cite[Lemma 4.3]{todaPTrationality} that the automorphisms $\delta$ and $T_H$ of the stack $\FM_X$ restrict to the following isomorphisms between the stacks of semistable objects:
\[T_H\colon \FM_{\beta, m}\xrightarrow{\sim} \FM_{\beta, m+H\cdot\beta}\, , \quad  \delta\colon  \FM_{\beta, m}\xrightarrow{\sim} \FM_{\beta, -m} \quad\textup{and}\quad  \delta\colon  \FL_{\beta, m}^s\xrightarrow{\sim} \FL_{\beta, -m}^{-s} \,.\]
By \cite[Proposition 4.10]{KM} we have the $K$-theoretic counterpart of the statement we want:
\[(T_H)_\ast \sM^K_{\beta, m}=\sM^K_{\beta, m+H\cdot \beta}\, , \quad \delta_\ast \sM^K_{\beta, m}=\sM^K_{\beta, -m} \quad\textup{and}\quad \delta_\ast \sL_{\beta, m}^{s, K}=\sL^{-s,K}_{\beta, -m}\, . \]
Thus, for example $\delta_\ast \sL_{\beta, m}^{s}$ and $\sL^{-s}_{\beta, -m}$ are the homological lifts of the same class, and their equality can then be shown as in the proof of Thoerem \ref{thm: Linvariants} since the $\pt$-normalization condition is easily seen to be preserved by $\delta_\ast$.
\end{proof}

\subsection{Wall-crossing coefficients between $\mu_0$ and $\mu_\infty$}\label{subsec: wccoefficients}

We will now describe the wall-crossing coefficients relating $\mu_\infty$ and $\mu_0$, finishing the proof of Theorem \ref{thm: wcintro}. Recall Definition~\ref{def: omord} of $\omord$. 

Formula \eqref{eq: Linvs} can be formally inverted using the properties of the $\tilde U$ coefficients \cite[Theorem 3.11]{joyce}:

\begin{equation}\label{eq: PTintermsofL}
   \mathsf{\PT}_{\beta, m}=\sum_{\substack{\beta_0+\ldots+\beta_k=\beta\\
    m_0+\ldots+m_k=m}}\tilde U(-; \mu_s, \mu_\infty) \cdot\big[\mathsf{M}_{\beta_k, m_k},\big[\ldots,\big[\mathsf{M}_{\beta_1, m_1},\sL^s_{\beta_0, m_0}\big]\big]\ldots \big]\big]\in \VX
    \end{equation}
Toda calculates in \cite[Section 4.3-4.5]{todaPTrationality} the wall-crossing coefficients between $\mu_0$ and $\mu_\infty$ when $m_i>0$, which are the same as the wall-crossing coefficients between $\mu_\epsilon$ for $0<\epsilon \ll 1$ and $\mu_\infty$. See also \cite[Theorem 2.8]{bucurves}, which has exactly the same combinatorics done more directly. The upshot of that calculation is the identity
   \[\PT_{\beta, m}=\sum_{\substack{\beta_0+\ldots+\beta_k=\beta\\
    m_0+\ldots+m_k=m\\
    0<\frac{m_1}{\beta_1\cdot H}\leq \ldots\leq \frac{m_k}{\beta_k\cdot H}}}\omord\Big(\frac{m_1}{\beta_1\cdot H},\ldots, \frac{m_k}{\beta_k\cdot H}\Big)\big[\mathsf{M}_{\beta_k, m_k},\big[\ldots,\big[\mathsf{M}_{\beta_1, m_1},\mathsf{L}_{\beta_0, m_0}^\epsilon \big]\big]\ldots \big]\big]\,.\]
Note that this formula differs from Theorem \ref{thm: wcintro} in two ways: it uses $\sL^\epsilon$ invariants, instead of $\sL^0$, and the inequality $0<\frac{m_1}{\beta_1\cdot H}$ is strict. We may relate $\sL^\epsilon$ and $\sL^0$ invariants using again the compatibility of the wall-crossing coefficients \cite[Theorem 3.11]{joyce}. Indeed, by unraveling the definition of the coefficients and using the combinatorial identity \cite[(81)]{todaPTrationality} we find that
\[\sL_{\beta, m}^{\epsilon}=\sum_{\beta_0+\ldots+\beta_k=\beta}\frac{1}{(k+1)!}\big[\mathsf{M}_{\beta_k, m_k},\big[\ldots,\big[\mathsf{M}_{\beta_1, m_1},\mathsf{L}_{\beta_0, m_0}^0 \big]\big]\ldots \big]\big]\,.\]
Combining this with the $\PT/\sL^\epsilon$ wall crossing formulas we get the wall-crossing formula in Theorem \ref{thm: wcintro}.

\begin{remark}\label{rmk: Mclassesdontcommute}
Unlike in the Calabi--Yau case, the classes $\sM_{\beta, m}$ do not seem to commute when $\dim H^2(X)>1$, at least for any obvious reason. Let $\beta_1, \beta_2$ be two non-proportional curve classes and choose $\gamma\in H^2(X)$ such that $\int_X\gamma\cdot \beta_1=-\int_X\gamma\cdot \beta_2\neq 0$. It follows that $\ch_2(\gamma)=0$ in $\BD^X_{(\beta, m)}$ where $\beta_1+\beta_2=\beta, m_1+m_2=m$, so that $\ch_3(\gamma)\in \BD^X_{\inv, (\beta, m)}$. An explicit calculation shows that 
\[\langle [\sM_{\beta_1, m_1},\sM_{\beta_2, m_2}],\ch_3(\gamma)\rangle =2\left(\int_X \gamma\cdot \beta_1\right)\sum_{i\in I}\langle \sM_{\beta_1, m_1},\ch_2(\gamma_i^L)\rangle \langle \sM_{\beta_2, m_2},\ch_2(\gamma_i^R)\rangle\]
where $\sum_{i\in I}\gamma_i^L\otimes \gamma_i^R$ is the projection of $\Delta_\ast c_1(X)$ onto $H^4(X)\otimes H^4(X)$. We see no reason why such expression should be non-zero, although we have not tried to calculate any explicit example. 
\end{remark}

\section{Moduli stacks of complexes: good moduli spaces, $\Theta$-stratifications and quasi-smoothness}\label{sec: technicalstacks}

In this section we will establish some technical results regarding the stacks $\FL_{\beta, m}^s$ which have been used in Section \ref{sec: wallcrossing} to apply the wall-crossing machinery of \cite{KM}. 

We have a chain of open embeddings
\[\FL_{\beta, m}^s\subseteq \FL_{\beta, m}\subseteq \FM_\CB\subseteq \FM_{\CA}\subseteq \FM_X\,,\]
where $\FM_\CA, \FM_\CB$ are the stacks of objects in $\CA$ and $\CB$, respectively, $\FL_{\beta, m}$ is the substack of objects of $\CB$ with Chern character $(-1,0, \beta, m)$, and $\FL_{\beta, m}$ the $\mu_s$-semistable locus. Each of these embeddings being open is shown in \cite{todaPTrationality}.

\subsection{Stack of filtrations}\label{subsec: Filt}

In what follows we will work with the stack of filtrations $\Filt(\FX)=\textup{Maps}(\Theta, \FX)$ of $\FX$, where $\Theta=[\BA^1/\BG_m]$. We remind the reader that the $\BC$-points of $\mathrm{Filt}(\FM_\CA)$ correspond to filtrations 
\begin{equation}
 \label{eq: Bfiltration}E_\bullet\colon \quad 
0=E_0\hookrightarrow E_1\hookrightarrow \ldots \hookrightarrow E_\ell\end{equation}
where the arrows are monomorphisms in $\CA$ which are not isomorphisms, together with a choice of integer weights $w_1<w_2<\ldots <w_\ell$. There are two natural maps $\Filt(\FX)\to \FX$, given by evaluation at 1 and evaluation at 0. Evaluation at 1 sends $E_\bullet$ to $E=E_\ell$, while evaluation at 0 sends $E_\bullet$ to the associated graded \[\textup{gr}(E_\bullet)\coloneqq \bigoplus_{i=1}^\ell E_i/E_{i-1}\,.\] 
See \cite[Section 6.3]{HLstructure} for more details.

The $\BC$-points of $\mathrm{Filt}(\FM_\CB)$ can be described in the same way, as filtrations $E_\bullet$ where $E_i$ are objects of $\CB$ and the arrows are strict monomorphisms in $\CB$; we say that such $E_\bullet$ is a filtration in $\CB$. This claim is a consequence of \cite[Proposition 1.3.1(3)]{HLstructure}, which asserts that $\mathrm{Filt}(\FM_\CB)$ is open in $\Filt(\FM_\CA)$ and that $E_\bullet$ is a $\BC$-point of $\mathrm{Filt}(\FM_\CB)$ if and only if the associated graded $\textup{gr}(E_\bullet)$
is an object of $\CB$. But this is equivalent to every factor $E_i/E_{i-1}$ being in $\CB$, since $\CB$ is closed under subobjects.

Recall that $\CB$ is the torsion free part of a torsion pair $\CA=\langle \CB', \CB\rangle$. If we let $\CA^t=\langle \CB, \CB'[1]\rangle$ be the tilt of $\CA$ according to this torsion pair then $\CB$ is the intersection $\CA\cap \CA^t$, and hence $\FM_{\CB}$ is the intersection of the open substacks $\FM_\CA$ and $\FM_{\CA^t}$ inside $\FM_X$. The stack of filtrations $\Filt(\FM_{\CB})$ is the intersection of the substacks $\Filt(\FM_{\CA})$ and $\Filt(\FM_{\CA^t})$ inside $\Filt(\FM_X)$. 

\subsection{Valuative criteria}\label{subsec: valuativecritaria}
In what follows, $R$ is a DVR essentially of finite type over $\BC$. Recall that a DVR is essentially of finite type if it can be written as the localization of a finite type $\BC$-algebra with respect to a prime ideal (necessarily of codimension 1). Let $\pi$ be a uniformizer of $R$. Essential finite type implies that the residue field $R/\pi$ is isomorphic to $\BC$. We let $K=\Frac(R)$, so that $\Spec(K)\to \Spec(R)$ is the inclusion of the generic point of $R$.

\begin{enumerate}
    \item ($\Theta$-reductive) Let $\Theta_R=\Theta\times \Spec(R)$ and let $0\in \Theta_R$ be the unique closed point. A stack $\FX$ is $\Theta$-reductive with respect to $R$ if any map $\Theta_R\setminus 0\to \FX$ can be uniquely extended to $\Theta_R\to \FX$.
    \item ($S$-complete) Let 
    \[\mathrm{ST}_R=\Spec(R[s,t]/(st-\pi))/\BG_m\,,\]
    where $\BG_m$ acts with weights $(1,-1)$ on $s$ and $t$. Let $0\in \mathrm{ST}_R$ be the closed point given by setting $s=t=0$. A stack $\FX$ is $S$-complete with respect to $R$ if any map $\mathrm{ST}_R\setminus 0\to \FX$ can be uniquely extended to $\mathrm{ST}_R\to \FX$.
    \item A stack $\FX$ satisfies the valuative criterion for universal closedness if every map $\Spec(K)\to \FX$ can be extended (not necessarily uniquely) to $\Spec(R)\to \FX$. 
\end{enumerate}

The stacks of objects in abelian categories, such as $\FM_\CA$, are known to satisfy these 3 valuative criterion \cite[Lemmas 7.16, 7.17, 7.18]{AHLH}. The same holds for $\CB$:

\begin{proposition}\label{prop: MBvaluative}
    The stack $\FM_\CB$ is $\Theta$-reductive, $S$-complete, and satisfies the valuative criterion for universal closedness with respect to any essentially finite type DVR. 
\end{proposition}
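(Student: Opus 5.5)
The plan is to deduce all three properties for $\FM_\CB$ from the corresponding properties of $\FM_\CA$ and $\FM_{\CA^t}$, which are stacks of objects in abelian categories (noetherian hearts with the openness/boundedness properties needed in \cite{AHLH}), using the description $\FM_\CB=\FM_\CA\cap\FM_{\CA^t}$ from Section~\ref{subsec: Filt}. The general principle is this. If $\FX$ and $\FX'$ are open substacks of a common stack $\FM_X$, and each satisfies a valuative criterion \emph{with uniqueness}, then for a map from $\Theta_R\setminus 0$ (resp. $\mathrm{ST}_R\setminus 0$) to $\FX\cap \FX'$ we get extensions to $\FX$ and to $\FX'$. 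These should agree as maps to $\FM_X$, so the extension lands in the intersection. For $\Theta$-reductivity and $S$-completeness I therefore plan to: (i) cite \cite[Lemmas 7.16, 7.17]{AHLH} for $\FM_\CA$, and apply the same lemmas to $\FM_{\CA^t}$ after checking that $\CA^t$ satisfies the hypotheses (noetherianity and generic flatness as in \cite[\S 7]{AHLH}); (ii) show the two extensions coincide in $\FM_X$.

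For (ii) I would use that $\FM_X$ itself has separated (affine) diagonal and that $\Theta_R\setminus 0$, $\mathrm{ST}_R\setminus 0$ are schematically dense in $\Theta_R$, $\mathrm{ST}_R$. This gives uniqueness of extensions into $\FM_X$ up to the choice of isomorphism at the closed point. Alternatively, one can observe that the extension in \cite{AHLH} is constructed explicitly as a pushforward of the family (the canonical $\BG_m$-equivariant extension across a codimension-2 point), and that this construction does not depend on the heart. This second route is concrete: the extended complex is $j_\ast$ of the given family, and that object is the same whether the computation is done in $\CA$ or in $\CA^t$. Here I would check that if the restriction is in $\FM_\CA$ and the extension computed in $\CA$ is $j_\ast$, then so is the one in $\CA^t$.

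The universal closedness criterion is the main obstacle, because extensions are not unique, so the intersection trick fails. Given a $K$-point $E_K$ in $\CB$, I would first extend it to a family $E_R$ in $\CA$ using \cite[Lemma 7.18]{AHLH}. Next I would modify the family so that the special fiber lies in $\CB$, by an elementary-modification argument. The fiber $E_0$ has a maximal subobject $T\in\CB'$ (from the torsion pair), and $\Hom(T,E_0)\neq 0$ must be killed. To do so, replace $E_R$ by the kernel of $E_R\to E_0\to E_0/T$; this is an elementary modification that keeps the generic fiber. This operation should strictly decrease an invariant, for instance the Chern character (or length, $\ch_3$ of the $0$-dimensional part together with $\ch_2$ of the $2$-dimensional part) of $T$. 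I expect to need a boundedness statement to guarantee termination: the classes of $\CB'$-subobjects of the special fibers lie in a set that is bounded below, which should follow from Toda's boundedness results for $\CB$ in \cite{todaPTrationality}. If that fails, the fallback is to run the same argument as the proof of \cite[Lemma 7.18]{AHLH} directly for the torsion pair, in the style of Langton's argument.
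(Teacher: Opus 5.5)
\textbf{$\Theta$-reductivity and $S$-completeness.} Your plan here is the paper's: $\FM_\CB=\FM_\CA\cap\FM_{\CA^t}$ is an intersection of open substacks of $\FM_X$, and one combines the corresponding properties of $\FM_\CA$ and $\FM_{\CA^t}$. Your extra care about why the two extensions agree in $\FM_X$ covers a point the paper treats as immediate.

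\textbf{Universal closedness: the gap.} Your elementary modification replaces the special fibre $E_0$ by an extension $0\to E_0/T\to E_0'\to T\to 0$, and nothing forces an invariant of $T$ to drop. If this extension splits, then, since $\Hom(\CB',E_0/T)=0$, the maximal $\CB'$-subobject of $E_0'$ is again exactly $T$. So the procedure can a priori repeat forever. This is the stationary case of Langton's argument, and no boundedness statement can rule it out. Moreover, Toda's boundedness results concern $\mu_s$-semistable objects of a fixed class, whereas $E_K$ is an arbitrary object of $\CB$.

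\textbf{What is missing.} The needed input is an ascending chain condition. After rescaling by $\pi^{-1}$, the $n$-th modification is a family $E_R\subseteq E_R^{(n)}\subseteq j_*E_K$, and the quotients $E_R^{(n)}/E_R$ form an increasing chain of $\CB'$-subobjects of the $\pi$-power torsion object $j_*E_K/E_R$. Termination is exactly the stabilization of this chain, i.e.\ a noetherianity statement for the tilted heart over $R$. The alternative is a Langton-type completion argument producing a nonzero $\CB'$-subobject of $E_{\hat K}$, which would need its own justification.

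\textbf{How the paper proceeds.} The paper never iterates. It passes to the locally noetherian Ind-completions $\CA_{qc}$ and $\CA^t_{qc}$, and identifies $\CB_{qc}=\CA_{qc}\cap\CA^t_{qc}$ with $\mathrm{Ind}(\CB)$. It then reruns the proof of \cite[Lemma 7.18]{AHLH}: it takes a finitely generated subobject of $j_*E_K$, using that $\CB_{qc,R}$ is closed under subobjects in $\CA_{qc,R}$ (Lemma \ref{lem: closedsub}). This is essentially your stated fallback.

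\textbf{Repairing your route.} Your procedure can be fixed in the same spirit by doing all modifications at once. Enlarge $E_R$ by the full $\CB'$-torsion of $j_*E_K/E_R$; equivalently, take the image of $E_R\to j_*E_K$ in $\CA^t_{qc,R}$. By Lemma \ref{lem: closedsub} this image lies in $\CB_{qc,R}$. It is a subobject of $j_*E_K$ in $\CA^t_{qc}$ by construction, and also in $\CA_{qc}$, because for maps between objects of $\CB_{qc}$ the $\CA$-kernel injects into the $\CA^t$-kernel. Hence it is $\pi$-torsion-free in both hearts, so its special fibre lies in $\CA\cap\CA^t=\CB$. Its finite generation is exactly where noetherianity replaces your termination argument.
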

\begin{proof}
For $\Theta$-reductive and $S$-complete this follows immediately from the same properties for $\FM_{\CA}$ and $\FM_{\CA^t}$, together with the observation that $\FM_{\CB}$ is the intersection $\FM_{\CA}\cap \FM_{\CA^t}$ of open substacks in $\FM_X$. 

For universal closedness we follow the proof of \cite[Lemma 7.18]{AHLH}. First of all, let us introduce the categories $\mathcal A_{qc}$ and $\mathcal A^t_{qc}$ as \textit{``quasi-coherent''} analogues of $\mathcal A$ and $\mathcal A^t$. For this, we note that both $\mathcal A$ and $\mathcal A^t$ are hearts of natural $t$-structures on $D^b(X)$; these $t$-structures (cf. \cite[Subsection 6.2]{HLstructure}) induce natural $t$-structures on $\operatorname{DQCoh}(X)$. Let $\mathcal A_{qc}$ and $\mathcal A^t_{qc}$, resp., be the two corresponding hearts. \textit{Loc. cit.} shows that $\mathcal A_{qc} = \operatorname{Ind}(\mathcal A)$ (and respectively for $\mathcal A^t)$. Indeed, this equality holds since (cf. \cite[Proposition 6.1.7]{HLstructure}) the category in question is compactly generated, has all filtered colimits, is locally Noetherian, and has $\mathcal A$ as the subcategory of its compact objects. It is elementary to see, moreover, that $\mathcal A_{qc}$ admits the same description as $\mathcal A$: it can be described as $\langle \operatorname{QCoh}_{\leq 1}(X), \operatorname{QCoh}_{\geq 2}(X)[1]\rangle$ (and \textit{mutatis mutandis} for $\mathcal A^t_{qc}$), see \cite[Theorem 5.2]{saorin}.
 
 One can now define $\mathcal B_{qc}$ as the intersection of $\mathcal A_{qc}$ and $\mathcal A^t_{qc}$. We claim that it is $\operatorname{Ind}$-completion of $\mathcal B$. Indeed, $\mathcal B_{qc}\subseteq \mathcal A_{qc}$ can be described by the conditions $\operatorname{Hom}(\mathcal O_x, F) = 0$, $\operatorname{Hom}(E, F) = 0$, on $F \in \operatorname{DQCoh(X)}$, for all closed points $x \in X$ and pure $2$-dimensional sheaves $E$ over $X$. Both of these properties are preserved under filtered colimits, so the only thing we have to prove is $\mathcal B_{qc} \subseteq \operatorname{Ind}(\mathcal B)$. But every object in $\operatorname{DQCoh}(X)$ is a filtered colimit of its finitely presented subobjects by \cite[Theorem 5.2]{saorin}. These subobjects lie in $\mathcal B$ by Lemma \ref{lem: closedsub} below.

 Now, as in \cite[Definition 7.3]{AHLH}, we introduce the categories\footnote{\cite{AHLH} denotes this object by $\mathcal A_R$.} $\mathcal A_{qc, R}$ and $\mathcal A^{t}_{qc, R}$  for a ring $R$ as follows: $\mathcal A_{qc, R}$ is the category of pairs $(E, \xi_E)$ where $E$ is an object of $\mathcal A_{qc}$, and $\xi_E$ is a morphism $R \to \operatorname{End}_{\mathcal A_{qc}}(E)$ (and similarly for $\mathcal A^t$). It is obvious that $\mathcal \mathcal B_{qc, R} = \mathcal A_{qc, R} \cap \mathcal A^{t}_{qc, R}$ admits a similar description in terms of objects of $\mathcal B_{qc}$ equipped with a morphism from $R$ to their endomorphism ring.

 Now the proof of the desired universal closedness follows as in  \cite[Lemma 7.18]{AHLH}. If $E\in \CB_{qc, K}=\CA_{qc, K}\cap \CA^t_{qc, K}$ then $j_\ast E\in \CB_{qc, R}=\CA_{qc, R} \cap \CA^t_{qc, R}$. The subobjects $F_\alpha$ in loc. cit. are also in $\CB_{qc, R}$, since $\CB_{qc, R}$ is closed under taking  subobjects in $\CA_{qc, R}$, by Lemma~\ref{lem: closedsub} below. 
\end{proof}

\begin{lemma}\label{lem: closedsub} The subcategory $\mathcal B_{qc}$ of $\operatorname{DQCoh}(X)$ is closed under taking subobjects in $\mathcal A_{qc}$ and under taking quotients in $\mathcal A^t_{qc}$.
\end{lemma}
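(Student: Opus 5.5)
The plan is to use the torsion theory description of $\CB_{qc}$ inside $\CA_{qc}$. Let $\CB'_{qc}\subseteq \CA_{qc}$ be the subcategory extension-generated by filtered colimits of $0$-dimensional sheaves and of shifts $E[1]$ of pure 2-dimensional sheaves. By the description in the proof of Proposition \ref{prop: MBvaluative}, $\CB_{qc}$ is exactly the right orthogonal: an object $F\in\CA_{qc}$ lies in $\CB_{qc}$ iff $\Hom(\CO_x,F)=0$ for all closed points $x$ and $\Hom(E[1],F)=0$ for all pure 2-dimensional coherent sheaves $E$. Dually, $\CA^t_{qc}=\langle \CB_{qc},\CB'_{qc}[1]\rangle$, and inside $\CA^t_{qc}$ the subcategory $\CB_{qc}$ is the torsion part, cut out by a left orthogonality condition against $\CB'[1]$.

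\textbf{Subobjects in $\CA_{qc}$.} Let $G\hookrightarrow F$ be a monomorphism in $\CA_{qc}$ with $F\in\CB_{qc}$. For any test object $T\in\{\CO_x, E[1]\}$, we have $T\in\CA\subseteq \CA_{qc}$, and $\Hom(T,-)$ is left exact on the abelian category $\CA_{qc}$. Hence $\Hom(T,G)\hookrightarrow\Hom(T,F)=0$, so $G$ satisfies the defining conditions and $G\in\CB_{qc}$.

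\textbf{Quotients in $\CA^t_{qc}$.} First identify $\CB_{qc}$ inside $\CA^t_{qc}$ as $\{F\in\CA^t_{qc}:\Hom(F,T[1])=0 \text{ for } T\in\CB'_{qc}\}$. This is the standard fact that in the tilt, the torsion class of the new torsion pair $\langle\CB_{qc},\CB'_{qc}[1]\rangle$ is the left orthogonal of the torsion-free class. To justify it, take the canonical short exact sequence $0\to B\to F\to B'[1]\to 0$ in $\CA^t_{qc}$. If $\Hom(F,B'[1])=0$, the surjection $F\to B'[1]$ vanishes, so $B'[1]=0$ and $F=B$. Conversely, $\Hom(\CB_{qc},\CB'_{qc}[1])=\Ext^1_{\CA_{qc}}(\CB_{qc},\CB'_{qc})$ vanishes because the torsion pair is compatible with $\Ext$-vanishing. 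One would then check, using the tilting sequence, that this vanishing holds for all $F\in\CB_{qc}$; if that route is awkward, one can instead argue directly as follows.

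Given an epimorphism $F\twoheadrightarrow Q$ in $\CA^t_{qc}$ with $F\in\CB_{qc}$, write $0\to B\to Q\to B'[1]\to 0$. The composite $F\to Q\to B'[1]$ is a surjection in $\CA^t_{qc}$ from an object of $\CB_{qc}$ to an object of $\CB'_{qc}[1]$. Such a map is zero by the torsion pair axiom $\Hom(\CB_{qc},\CB'_{qc}[1])=0$ in the tilted heart, since this is exactly the orthogonality of the tilted torsion pair. A zero map that is surjective has zero target, so $B'[1]=0$ and $Q=B\in\CB_{qc}$.

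\textbf{Main obstacle.} The key point is establishing that $\langle\CB_{qc},\CB'_{qc}\rangle$ really is a torsion pair on $\CA_{qc}$, i.e.\ the quasi-coherent analogue of $\langle\CB',\CB\rangle$. In particular, one needs $\Hom(\CB'_{qc},\CB_{qc})=0$ and the existence of torsion decompositions. For this I would use that $\CB'$ is closed under filtered colimits in the Ind-completion: the torsion part of an object of $\CA_{qc}=\operatorname{Ind}(\CA)$ is the colimit of the torsion parts of its finitely presented subobjects, as in \cite[Theorem 5.2]{saorin}. The Hom-vanishing would follow because the test objects are compact and $\Hom$ from an object of $\CB'_{qc}$ is a limit of $\Hom$'s from compact generators, each of which vanishes against $\CB_{qc}$. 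With the torsion pair in hand, the tilt $\CA^t_{qc}$ is the heart of the induced $t$-structure, and both closure statements follow formally as above.
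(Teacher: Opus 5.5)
Your subobject half is exactly the paper's argument: $\CB_{qc}$ is cut out of $\CA_{qc}$ by $\Hom(\CO_x,-)=0$ and $\Hom(E[1],-)=0$, and these conditions pass to subobjects by left exactness. The quotient half, however, rests on a false vanishing. You claim $\Hom(\CB_{qc},\CB'_{qc}[1])=\Ext^1_{\CA_{qc}}(\CB_{qc},\CB'_{qc})=0$. A torsion pair gives no such $\Ext^1$-vanishing, and here it already fails for coherent objects: $\CO_X[1]\in\CB$, $\CO_x\in\CB'$, and $\Hom(\CO_X[1],\CO_x[1])=\Hom(\CO_X,\CO_x)=\BC$. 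So your description $\{F\in\CA^t_{qc}:\Hom(F,T[1])=0\}$ wrongly excludes $\CO_X[1]$. Likewise, your ``direct'' step, that a surjection $F\to B'[1]$ from an object of $\CB_{qc}$ must vanish, is unjustified. The source of the error is the shift you took over from the text. $\langle\CB,\CB'[1]\rangle$ is not a heart at all: for a pure $1$-dimensional sheaf $F$ and a point $x$ of its support, $\Hom(F,(\CO_x[1])[-1])=\Hom(F,\CO_x)\neq 0$ is a nonzero negative $\Ext$ between two of its objects. The tilt for which $\CB=\CA\cap\CA^t$ holds is $\CA^t=\langle\CB,\CB'[-1]\rangle$. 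In it, $\CB$ is the torsion class and $\CB'[-1]$ the torsion-free class, so $\CA^t\subseteq D^{[0,1]}_{\CA}$.

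With this shift your torsion-class argument works. The orthogonality it needs becomes $\Hom(\CB_{qc},\CB'_{qc}[-1])=\Ext^{-1}_{\CA_{qc}}(\CB_{qc},\CB'_{qc})=0$, which is automatic and requires no $\Ext^1$ input. You can also drop your ``main obstacle'': the torsion pair on $\CA_{qc}$, and your unproved identification of the induced heart $\CA^t_{qc}$ with the tilt of $\CA_{qc}$. The aisles on $\operatorname{DQCoh}(X)$ are generated under colimits and extensions by the coherent ones, and the coaisles are their right orthogonals. Hence $\CA^t\subseteq D^{[0,1]}_{\CA}$ gives $\CA^t_{qc}\subseteq D^{[0,1]}_{\CA_{qc}}$. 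Now take $0\to K\to F\to Q\to 0$ in $\CA^t_{qc}$ with $F\in\CB_{qc}$. In the exact sequence $H^1_{\CA_{qc}}(F)\to H^1_{\CA_{qc}}(Q)\to H^2_{\CA_{qc}}(K)$ both ends vanish, so $Q\in\CA_{qc}\cap\CA^t_{qc}=\CB_{qc}$. In the Hom-vanishing language of the paper's one-line proof: $F\in\CA^t_{qc}$ lies in $\CA_{qc}$ if and only if $\Hom(F,G[-1])=0$ for all $G\in\CA_{qc}$. This condition passes to quotients because $\Hom(K[1],G[-1])=0$.
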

\begin{proof}
 For both categories  this follows immediately from the definition of $\mathcal B_{qc}$ via the vanishing of $\operatorname{Hom}$ functors.
\end{proof}

We will now prove that the stacks of semistable objects $\FL_{\beta, m}^s$ are also $\Theta$-reductive and $S$-complete. There is a general technique to deduce such statements for the semistable loci from the corresponding statement for the ambient stack, see \cite[Proposition 6.14]{AHLH}. Unfortunately, their result is for $\Theta$-stratifications that come from a numerical invariant in the sense of \cite[Definition 4.1.1]{HLstructure}, and we do not know how to interpret our stability in that way since it is not equivalent to an additive stability condition (cf. \cite[Section 7.3]{AHLH}), at least in an obvious way. Instead, we imitate the strategy used in the proof of \cite[Proposition 6.14]{AHLH}. We prove a few elementary lemmas about semistablity before proceeding.

The following lemma is similar to the standard fact that the for a (strong) stability condition, in the sense of \cite[Definition 4.1]{JO06III}, the semistable objects of fixed slope form an abelian category, see for example \cite[Proposition 2.20]{BST}. The stability condition $\mu_s$ is a weak stability, so semistable objects with the same slope do not form an abelian category (note that every object of type $(-1, 0, \beta, m)$ has the same slope), but we have the following weaker result:

\begin{lemma}\label{lem: stricmor} Let $E\in \CB$ be an object of topological type $(-1,0, \beta, m)$ and $F$ a pure 1-dimensional sheaf with $\mu_H(F)=s$.
    \begin{enumerate}
        \item If $F\hookrightarrow E$ is a strict monomorphism in $\CB$ then $E$ is $\mu_s$-semistable if and only if $E/F$ and $F$ are are both $\mu_s$-semistable
        \item If $E\twoheadrightarrow F$ is a strict epimorphism in $\CB$ then $E$ is $\mu_s$-semistable if and only if $F$ and $\ker(E\to F)$ are $\mu_s$-semistable.
    \end{enumerate}
\end{lemma}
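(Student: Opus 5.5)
The plan is to verify both parts directly from Definition \ref{def: todastability}, in its reformulation as a weak stability condition: $\mu_s(E)=s$ for objects of type $(-1,0,\beta,m)$, and $\mu_s(G)=\mu_H(G)$ for pure 1-dimensional sheaves. I treat part (1); part (2) is the mirror image, obtained either by the same argument with the roles of subobjects and quotients swapped, or by applying $\delta$ together with Theorem \ref{thm: propertiesLspaces}.4 (which exchanges $s\leftrightarrow -s$ and strict mono/epi).

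\textbf{Setup.} Take $F\hookrightarrow E$ strict with quotient $E'=E/F\in\CB$. By \cite[Lemma 3.16]{todaPTrationality}, $E'$ has type $(-1,0,\beta-\beta_F,m-m_F)$. Note that $F$ is $\mu_s$-semistable exactly when it is $\mu_H$-semistable.

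\textbf{($\Rightarrow$).} Assume $E$ is $\mu_s$-semistable.
\begin{enumerate}
\item $F$ is $\mu_H$-semistable. Suppose not; take its maximal destabilizing subsheaf $F_1\subseteq F$, so $\mu_H(F_1)>s$. Then $F_1\hookrightarrow F\hookrightarrow E$ is a monomorphism with $F_1$ pure, and I need it to be strict in $\CB$, i.e.\ $E/F_1\in\CB$. This holds because $E/F_1$ is an extension of $E'$ by $F/F_1$. Choosing $F_1$ so that $F/F_1$ is pure (e.g.\ a saturated piece of the HN filtration), we have $F/F_1\in\CB$, and $\CB$ is extension-closed since it is a torsion-free class. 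This contradicts condition (2) of the definition.
\item Quotients of $E'$. A strict epimorphism $E'\twoheadrightarrow G$ composes to a strict epimorphism $E\twoheadrightarrow G$, whose kernel is a subobject of $E$ and hence lies in $\CB$. Therefore $\mu_H(G)\geq s$.
\item Subobjects of $E'$. For a strict monomorphism $G\hookrightarrow E'$, form the pullback $\tilde G\subseteq E$, an extension of $G$ by $F$. This is a 1-dimensional sheaf with $\mu_H(\tilde G)$ a weighted average of $s$ and $\mu_H(G)$, and $E/\tilde G\cong E'/G\in\CB$. If $\tilde G$ is pure, then $\mu_H(\tilde G)\le s$ forces $\mu_H(G)\le s$. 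Purity holds because $\tilde G$ is an extension of pure sheaves, and $\tilde G\in\CB$ because $\CB$ is closed under subobjects.
\end{enumerate}

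\textbf{($\Leftarrow$).} Assume $E'$ and $F$ are semistable.
\begin{enumerate}
\item Quotients of $E$. Given a strict epimorphism $E\twoheadrightarrow G$, compose $F\to E\to G$ and let $I$ be its image, a quotient of $F$ and subsheaf of $G$. Then $G/I$ is a quotient of $E'$. The idea is that $\mu_H(I)\ge s$ by semistability of $F$ (if $I\neq 0$), and $G/I$ receives a strict epimorphism from $E'$, giving slope $\ge s$. The difficulty is that $G/I$ need not be pure, which may disturb the strictness; I handle this by replacing it with its pure quotient, which only increases $\mu_H$ appropriately, or by using a 0-dimensional torsion argument. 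Seesaw then gives $\mu_H(G)\ge s$.
\item Subobjects of $E$. Dually, given a strict monomorphism $G\hookrightarrow E$, intersect $G$ with $F$ and map the rest to $E'$. The image of $G$ in $E'$ is a 1-dimensional sheaf, and its saturation in $E'$ should be strict.
\end{enumerate}

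\textbf{Main obstacle.} The hard part throughout is checking strictness (cokernels lying in $\CB$) and purity of the auxiliary sheaves. This means carefully using that $\CB$ is closed under extensions and subobjects, and that $\CB\cap\Coh_{\le1}$ consists of the pure sheaves.
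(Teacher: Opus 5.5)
Your proposal is correct and follows essentially the paper's route: pullback along $E\to E/F$ and composition of strict epimorphisms for the forward direction, and for the converse you split a test sheaf via the image of (or intersection with) $F$ and pass to maximal pure quotients, with part (2) handled by the mirrored argument. You are in fact a bit more careful than the paper, which neither checks that $F$ itself is semistable in the forward direction nor justifies applying semistability of $E/F$ to the possibly non-strict monomorphism from the image $G_Q$ of $G$. Your saturation step settles that last point: if $T$ is the $\CB'$-part of $(E/F)/G_Q$, then $\bar G_Q=\ker\big(E/F\to ((E/F)/G_Q)/T\big)$ is a rank-$0$ object of $\CB$, hence a pure $1$-dimensional sheaf, so $T$ is $0$-dimensional and $\mu_H(G_Q)\leq \mu_H(\bar G_Q)\leq s$.
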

\begin{proof}\textit{``Only if'' direction}. Suppose that $E$ is $\mu_s$-semistable.

(1) Let $G$ be a pure 1-dimensional sheaf. If $G$ is a quotient of $E/F$ then it is also a quotient of $E$, hence $\mu_H(G)\geq s$. If $G=\tilde G/F$ is a subobject of $E/F$, then $\mu_H(\tilde G)\leq s$, and, since $\mu_H(F)=s$, it follows that $\mu_H(G)\leq s$ as well, so $E/F$ is semistable. 

(2) The proof follows the same strategy. The subobject case is immediate. If $G$ is a quotient of $K\coloneqq \ker(E\to F)$, then $\tilde G=E/\ker(K\to G)$ is a 1-dimensional sheaf (not necessarily pure) which is a quotient of $E$, so $\mu_H(\tilde G)\geq s$. Indeed, if $\tilde G$ is pure, this is obvious. If not, consider $\tilde G_0 = \tilde G/T$ where $T$ is the maximal $0$-dimensional subobject in $\tilde G$. Then $E \to \tilde G_0$ is an epimorphism, and, hence a strict epimorphism since $\mathcal B$ is closed under subobjects. Thus, $\mu_H(\tilde G_0) \geq s$, and an elementary calculation shows that, thus, $\mu_H(\tilde G) \geq s$ as well. We have a short exact sequence of 1-dimensional sheaves $0\to G\to \tilde G\to F\to 0$, so we conclude that $\mu_s(G)\geq s$.

\textit{``If'' direction.} Suppose that $F$ and $E/F$ (resp. $\ker(E\to F)$) are semistable.

(1) Let $A \hookrightarrow E$ be a strict monomorphism in $\mathcal B$ with $A$ a pure $1$-dimensional sheaf. The exact sequence $0 \to F \to E \to Q\coloneqq E/F \to 0$ induces $0 \to A_F \to A \to A_Q \to 0$: here $A_F$ is $\operatorname{Ker}(A \to E \to E/F)$. Now, $\mu_H(A_F) \leq s$ from the assumptions since it is pure $1$-dimensional if non-zero. Now, $A_Q$ is either pure $1$-dimensional or has some $0$-dimensional subobject. In the latter case $E/F$ has a non-trivial $0$-dimensional subobject as well; contradiction with the fact that $F \to E$ is a \textit{strict} monomorphism. Thus, $A_Q$ is purely $1$-dimensional, hence $\mu_H(A_Q) \leq s$, hence $\mu_H(A) \leq s.$

By the definition of $\mu_s$-semistability, it remains to check that no pure $1$-dimensional quotient $E \to A$ destabilizes $E$ as well. This goes analogously, but this time with $A_F = \operatorname{Im}(F \to E \to A)$, using the same trick with the maximal torsion-free quotient as above.

(2) This follows from the same arguments used in the other case.\qedhere
\end{proof}

\begin{remark}
An alternative proof can be obtained by using the seesaw property for Toda's ``phase'' function $\phi_{\sigma_m}^{\dag}$ from \cite{todaPTrationality}. We leave the details to the reader.
\end{remark}

The next lemma can be thought of as the analog to \cite[Lemma 6.15]{AHLH} in our context.



\begin{lemma}\label{lem: grEsemistable}
Let $E_\bullet$ be a filtration of $E\in \FL_{\beta, m}$ and let $f\colon \Theta\to \FL_{\beta, m}$ be the map corresponding to $E_\bullet$ (and an arbitrary choice of weights). Then the following are equivalent:
\begin{enumerate}
    \item $f$ factors through $\FL_{\beta, m}^s$;
    \item $\textup{gr}(E_\bullet)$ is $\mu_s$-semistable;
    \item The graded pieces $E_j/E_{j-1}$ are all $\mu_s$-semistable of the same slope $s$;
    \item $E$ is semistable and the graded pieces $E_j/E_{j-1}$ all have the same slope $s$.
\end{enumerate}
\end{lemma}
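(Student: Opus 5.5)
We plan to prove the cycle of implications $(1)\Leftrightarrow(2)$, $(2)\Leftrightarrow(3)$ and $(3)\Leftrightarrow(4)$, using the structure of filtrations in $\CB$ from Section~\ref{subsec: Filt} together with Lemma~\ref{lem: stricmor}.

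\emph{Step 1: $(1)\Leftrightarrow(2)$.} The map $f\colon\Theta\to\FL_{\beta,m}$ has two $\BC$-points up to the $\BG_m$-action. The open point $1$ maps to $E$, and the closed point $0$ maps to $\textup{gr}(E_\bullet)$. Since $\FL^s_{\beta,m}$ is open in $\FL_{\beta,m}$ and every nonempty open substack of $\Theta$ contains the open point, $f$ factors through $\FL^s_{\beta,m}$ if and only if $\textup{gr}(E_\bullet)$ is $\mu_s$-semistable. This uses that an open substack of $\Theta$ containing $0$ is all of $\Theta$. We also need that semistability of $\textup{gr}$ forces semistability of $E$, which will come out of Step 3.

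\emph{Step 2: the shape of the filtration.} A filtration in $\CB$ of an object of type $(-1,0,\beta,m)$ has the following form, by \cite[Lemma 3.16]{todaPTrationality} applied inductively. Exactly one graded piece $E_{j_0}/E_{j_0-1}$ has type $(-1,0,\beta',m')$, and all the other pieces are pure $1$-dimensional sheaves. Purity holds because the pieces lie in $\CB$, and $\CB\cap\Coh_{\le1}$ consists of pure sheaves. For the object $\textup{gr}(E_\bullet)=\bigoplus_j E_j/E_{j-1}$, every sheaf summand $G$ is both a strict subobject and a strict quotient. Semistability therefore forces $\mu_H(G)\le s$ and $\mu_H(G)\ge s$, so $\mu_H(G)=s$. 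Each summand is semistable because its strict sub/quotient sheaves are sub/quotients of $\textup{gr}$. The converse for direct sums reduces to Lemma~\ref{lem: stricmor} by splitting off one slope-$s$ semistable sheaf summand at a time. This gives $(2)\Leftrightarrow(3)$, where "slope $s$" for the rank $-1$ piece holds automatically by convention.

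\emph{Step 3: $(3)\Leftrightarrow(4)$.} We induct on the length $\ell$, peeling pieces off the ends. If $j_0<\ell$, the last piece $E_\ell/E_{\ell-1}=F$ is a pure sheaf of slope $s$, and $E\twoheadrightarrow F$ is a strict epimorphism. Lemma~\ref{lem: stricmor}(2) then says that $E$ is semistable if and only if $F$ and $E_{\ell-1}$ are semistable. If $j_0=\ell$, then $E_1$ is a sheaf of slope $s$ strictly included in $E$, and Lemma~\ref{lem: stricmor}(1) reduces the question to $E_1$ and $E/E_1$, where $E/E_1$ carries the induced filtration $E_\bullet/E_1$ in $\CB$. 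In either case the induction hypothesis applies to the shorter filtration, and the claim follows in both directions.

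\emph{Main obstacle.} The delicate point is Step 2's bookkeeping. We must check that the induced filtrations on $E_{\ell-1}$ and on $E/E_1$ are again filtrations in $\CB$ with strict monomorphisms. We must also check that quotients $E/E_1$ stay in $\CB$, which strictness guarantees. For sub/quotients of the direct sum $\textup{gr}(E_\bullet)$ in Step 2, the subtle case is a strict sheaf subobject of $\textup{gr}$ that maps nontrivially into the rank $-1$ summand. We would handle this by projecting to summands and using semistability of each piece, exactly as in the "if" direction of Lemma~\ref{lem: stricmor}.
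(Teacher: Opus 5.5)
Your proposal is correct and follows the paper's proof. You get $(1)\Leftrightarrow(2)$ from openness of $\FL^s_{\beta,m}$ together with $f(0)=\textup{gr}(E_\bullet)$; this is exactly the content the paper cites from \cite[Proposition 1.3.1(3)]{HLstructure}. You get $(2)\Leftrightarrow(3)$ from the direct-sum criterion, and $(3)\Leftrightarrow(4)$ from repeated use of Lemma~\ref{lem: stricmor}, which you organize as an explicit induction peeling off end pieces. Two small points: the remark in Step 1 that semistability of $E$ must come from Step 3 is superfluous, since openness already gives it because $1\in\Theta$ specializes to $0$. Likewise, the ``main obstacle'' about sheaf subobjects of $\textup{gr}(E_\bullet)$ is already handled by your splitting argument.
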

\begin{proof}
$(1)\Leftrightarrow (2)$ is immediate from \cite[Proposition 1.3.1(3)]{HLstructure}. For $(2)\Leftrightarrow (3)$ note that the direct sum of objects of  $\mathcal B$ is semistable if and only if each of these objects is semistable and they all have the same slope. 

For $(3)\Leftrightarrow (4)$ note that exactly one of the graded pieces in question has topological type $(-1, 0, \beta', m')$ for some $\beta'$ and $m'$, and the others are $1$-dimensional. If $E$ is semistable the semistability of these $1$-dimensional sheaves follows from Lemma~\ref{lem: stricmor}; the semistability of the $(-1, 0, \beta', m')$ piece then follows from the same statement. Conversely, if the graded pieces all have the same slope and are semistable if tollows again from Lemma~\ref{lem: stricmor} that $E$ is semistable.
\end{proof}

The following lemma will be used to deduce $S$-completeness for the semistable locus.

\begin{lemma}\label{lem: oppositefiltrations}
    Let $E, E'\in \FL_{\beta, m}^s$. Suppose that we have two filtrations
    \begin{align*}
        0&=E_\ell\hookrightarrow E_{\ell-1}\hookrightarrow \ldots\hookrightarrow  E_{1}\hookrightarrow  E_0=E\\
        0&=E'_0\hookrightarrow  E'_{1}\hookrightarrow  \ldots\hookrightarrow E_{\ell-1}'\hookrightarrow E_\ell'=E'
        \end{align*}
in $\CB$ which have the same graded factors in reverse order, i.e.
\[E_{j}/E_{j+1}\simeq E'_{j+1}/E'_{j}\,\quad\textup{for }0\leq j<\ell.\]
Then the graded factors $E_{j}/E_{j+1}$ are all $\mu_s$-semistable of slope $s$. 
\end{lemma}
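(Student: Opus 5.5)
We plan to prove this by induction on $\ell$, using Lemma \ref{lem: stricmor} to peel off one factor at a time. The starting observation, via \cite[Lemma 3.16]{todaPTrationality}, is that in each of the two filtrations exactly one graded factor has topological type $(-1,0,\beta',m')$. Every other factor has type $(0,0,\beta'',m'')$ and lies in $\CB\cap\Coh_{\leq 1}(X)$, so it is a pure $1$-dimensional sheaf. Since the factors agree up to reversal, the set of sheaf factors is the same for both filtrations, and the unique non-sheaf factor sits at mirrored positions.

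\emph{Inequalities at the ends.} Consider the first step of the filtration of $E$: the quotient $E=E_0\twoheadrightarrow E_0/E_1$. This is a strict epimorphism in $\CB$, because its kernel $E_1$ lies in $\CB$. If $G_0:=E_0/E_1$ is a pure $1$-dimensional sheaf, semistability of $E$ gives $\mu_H(G_0)\geq s$. In the filtration of $E'$, the same factor $G_0\simeq E'_1/E'_0=E'_1$ is the first subobject $E'_1\hookrightarrow E'$. This is a strict monomorphism, since the filtration lies in $\CB$, so semistability of $E'$ gives $\mu_H(G_0)\leq s$. Hence $\mu_H(G_0)=s$.

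\emph{Inductive step.} Apply Lemma \ref{lem: stricmor}(2) to $E\twoheadrightarrow G_0$ and Lemma \ref{lem: stricmor}(1) to $G_0\hookrightarrow E'$. This shows that $G_0$ is $\mu_s$-semistable, and that $E_1$ and $E'/E'_1$ are $\mu_s$-semistable of type $(-1,0,\ast,\ast)$. The truncated filtrations
\[
0=E_\ell\hookrightarrow\cdots\hookrightarrow E_1
\qquad\text{and}\qquad
0=E'_1/E'_1\hookrightarrow E'_2/E'_1\hookrightarrow\cdots\hookrightarrow E'/E'_1
\]
are filtrations in $\CB$ with $\ell-1$ steps. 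Their graded factors are again reversed copies of each other, so induction applies. Strictness of the quotient filtration in $\CB$ needs a short check: $(E'_{j+1}/E'_1)/(E'_j/E'_1)\simeq E'_{j+1}/E'_j\in\CB$.

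\emph{The other case, and the main obstacle.} If instead the first factor $G_0$ is the non-sheaf one, then the last factor $E_{\ell-1}$ of $E$ is a sheaf, namely the first subobject $E_{\ell-1}\hookrightarrow E$. It equals $E'/E'_{\ell-1}$, the last quotient of $E'$. We run the symmetric argument there: $\mu_H\leq s$ from $E$, $\mu_H\geq s$ from $E'$, and then Lemma \ref{lem: stricmor} peels off this factor instead. In the base case $\ell=1$ the statement is trivial since $E_0/E_1=E$. The non-sheaf factor has slope $s$ by convention, and its semistability comes out at the end of the induction, as the final remaining object of type $(-1,0,\ast,\ast)$. The main obstacle is verifying that the truncated and quotient filtrations are still strict in $\CB$, with their pieces in $\CB$. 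We plan to handle this via closure of $\CB$ under subobjects, together with the fact that the factors already lie in $\CB$.
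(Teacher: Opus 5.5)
Your proposal is correct and is essentially the paper's proof: induction on $\ell$, peeling off whichever end factor ($E'_1\simeq E/E_1$ or $E_{\ell-1}\simeq E'/E'_{\ell-1}$) is a sheaf, and forcing its slope to equal $s$ because it is a quotient of one semistable object and a subobject of the other. You then apply Lemma~\ref{lem: stricmor} and the induction hypothesis to the truncated filtrations, and your second case is exactly the paper's ``without loss of generality''. The strictness checks you flag go through because $\CB$, as the torsion-free part of the torsion pair $\langle \CB',\CB\rangle$, is closed under extensions (needed e.g.\ for $E'/E'_1\in\CB$) as well as under subobjects.
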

\begin{proof}
We prove the statement by induction on $\ell$. The case $\ell=1$ is trivial, so suppose $\ell>1$. All but one of the graded factors are 1-dimensional sheaves, so at least one of $E_1'$ and $E_{\ell-1}$ is 1-dimenisonal. Without loss of generality assume $E'_1$ is. Since $E'_1\simeq E/E_1$ is a subobject of $E'$ and a quotient of $E$ it follows that $\mu_H(E'_1)=s$, and therefore $E'_1$, $E/E_1'$ and $E_{1}=\ker(E\to E/E_1\simeq E_1')$  are all semistable by Lemma \ref{lem: stricmor}. Using the induction hypothesis with $E_1$ in place of $E$ and $E'/E_1'$ in place of $E'$ we conclude the statement. \qedhere
\end{proof}

We can now finally prove $\Theta$-reductivity and $S$-completeness of the semistable locus. 

\begin{proposition}\label{prop: valuativestability}
    The stack $\FL_{\beta, m}^s$ is $\Theta$-reductive and $S$-complete with respect to any essentially finite type DVR.
\end{proposition}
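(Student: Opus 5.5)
We follow the strategy of \cite[Proposition 6.14]{AHLH}, replacing their numerical-invariant input by Lemmas \ref{lem: grEsemistable} and \ref{lem: oppositefiltrations}. In both cases we are given a map from a punctured stack ($\Theta_R\setminus 0$ or $\mathrm{ST}_R\setminus 0$) into $\FL_{\beta, m}^s$. Since $\FL_{\beta, m}^s\subseteq \FM_\CB$ is open, Proposition \ref{prop: MBvaluative} gives a unique extension to a map from $\Theta_R$ or $\mathrm{ST}_R$ into $\FM_\CB$. It lands in $\FL_{\beta, m}$, since Chern characters are locally constant and $\FL_{\beta,m}$ is a union of components of $\FM_\CB$. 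Uniqueness of an extension into $\FL_{\beta, m}^s$ follows from uniqueness in $\FM_\CB$. The only point to check is therefore that the closed point $0$ maps into the open substack $\FL_{\beta, m}^s$. Since that substack is open, it is enough to check that the object at the closed point is $\mu_s$-semistable.

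For $\Theta$-reductivity, restrict the extended map to the special fibre $\Theta\times\Spec(R/\pi)=\Theta_{\BC}\to \FL_{\beta,m}$. This map corresponds to a filtration $E_\bullet$ in $\CB$ of an object $E$. Here $E$ is the image of the point $1$ of the special fibre, which lies in $\Theta_R\setminus 0$, so $E$ is $\mu_s$-semistable. The image of $0$ is $\textup{gr}(E_\bullet)$. The factors $E_j/E_{j-1}$ are extracted from a filtration of a $\mu_s$-semistable object, so the expected slopes must all equal $s$. I would get this by also using the generic fibre: over $K$ the map comes from a filtration of a semistable object, and the associated graded over $K$ is again semistable (it is the image of $0\times\Spec K$, which lies in the punctured stack). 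By Lemma \ref{lem: grEsemistable}, all its factors are then semistable of slope $s$. Chern characters of the factors are constant in the flat family over $R$, so the special-fibre factors also have slope $s$. Applying Lemma \ref{lem: grEsemistable} ((4)$\Rightarrow$(2)) to $E_\bullet$ then shows $\textup{gr}(E_\bullet)$ is semistable, so $0\mapsto \FL_{\beta,m}^s$.

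For $S$-completeness, again restrict to the special fibre $\{st=0\}/\BG_m$. The loci $\{t=0\}$ and $\{s=0\}$ are two copies of $\Theta$ glued at $0$. They correspond to two filtrations in $\CB$, of objects $E$ and $E'$. These are the images of the points $s=1$ and $t=1$, which lie in $\mathrm{ST}_R\setminus 0$, so $E,E'\in\FL_{\beta,m}^s$. The two filtrations have the same associated graded (the image of $0$), with the factors appearing in opposite order because the weights are opposite. After reindexing, this is exactly the situation of Lemma \ref{lem: oppositefiltrations}. That lemma shows all graded factors are $\mu_s$-semistable of slope $s$, and Lemma \ref{lem: grEsemistable} then gives that the image of $0$ lies in $\FL_{\beta,m}^s$.

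The main obstacle is the bookkeeping identifying the special-fibre restrictions with filtrations in $\CB$ whose factors match in reverse order. This requires the description of $\BC$-points of $\Filt(\FM_\CB)$ from Section \ref{subsec: Filt}, together with the standard description of maps $\mathrm{ST}_{\BC}\to\FX$ as pairs of opposite filtrations with common associated graded \cite{AHLH}. In the $\Theta$-reductive case there is a further subtle point: showing the special-fibre slopes equal $s$. If the generic-fibre argument is awkward, I would instead argue directly. Each piece $E_j$ of the filtration is a strict sub of the semistable $E$, and each quotient $E/E_{j}$ is a strict quotient, so the $1$-dimensional factors have slope $\le s$ or $\ge s$ accordingly. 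Then I would use semicontinuity across $R$ to force equality.
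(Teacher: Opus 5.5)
Your proposal is correct and follows the paper's proof essentially step by step. You extend via Proposition~\ref{prop: MBvaluative}, transfer the slope-$s$ condition on the graded factors from the punctured locus, and conclude with Lemma~\ref{lem: grEsemistable} for $\Theta$-reductivity and with Lemma~\ref{lem: oppositefiltrations} for $S$-completeness. The only difference is cosmetic: the paper spreads the map out to $\Theta\times C$ for a smooth curve $C$ with $\CO_{C,x}\simeq R$ and reads off the slopes at $\BC$-points $y\neq x$, rather than at $\Spec K$, so that Lemma~\ref{lem: grEsemistable} (stated for $\BC$-points) applies without passing to $\bar K$. Your fallback ``direct'' argument would not by itself force the slopes to equal $s$ without this generic-fibre input, so you do need the main argument rather than the fallback.
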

\begin{proof}
($\Theta$-reductive) Since $\FM_\CB$ is $\Theta$-reductive by Proposition \ref{prop: MBvaluative} and $\FL_{\beta, m}$ is a union of connected components in $\FM_\CB$, any map $\Theta_R\setminus 0\to \FL_{\beta, m}^s\subseteq \FL_{\beta, m}$ extends uniquely to $\Theta_R\to \FL_{\beta, m}$. Since $\FL_{\beta, m}^s$ is open in $\FL_{\beta, m}$ we only need to show that the closed point $0\in \Theta_R$ is mapped to $\FL_{\beta, m}^s$. As $\FM_\CB$ is locally of finite type we can extend the map $\Theta_R\to \FL_{\beta, m}$ to a map $\Theta\times C\to \FL_{\beta, m}$ where $C$ is a smooth finite type curve over $\BC$ and $x\in C$ is such that $\CO_{C, x}\simeq R$; by shrinking $C$ if necessary we may assume that $\Theta\times C\setminus \{(0,x)\}$ maps to~$\FL_{\beta, m}^s$. Let $\CE_\bullet$
be the corresponding filtration with $\CE_i\in D^b(C\times X)$ flat over $C$. For any $\BC$-point $y\in C$ distinct from $x$ we know from Lemma \ref{lem: grEsemistable} that $\bigoplus_{i=1}^\ell \CE_{i+1, y}/\CE_{i, y}$ is $\mu_s$-semistable, and in particular $\CE_{i+1, y}/\CE_{i, y}$ all have slope~$s$; since the slope is deformation invariant the same is true for $y=x$. Since $\CE_{\ell, x}$ is semistable (because $(1,x)\in \Theta\times C$ is mapped to $\FL_{\beta, m}^s$ by hypothesis) we conclude from Lemma \ref{lem: grEsemistable} that $\bigoplus_{i=1}^\ell\CE_{i+1, x}/\CE_{i, x}$ is also semistable, and hence $(0, x)$ lands in $\FL_{\beta, m}^s$ as well.

($S$-complete) As for $\Theta$-reductivity, a map $\mathrm{ST}_R\setminus 0\to \FL_{\beta, m}^s\subseteq \FM_\CB$ extends uniquely to $\mathrm{ST}_R\to \FL_{\beta, m}$ and we are left with showing that the closed point 0 is also mapped to $\FL_{\beta, m}^s$ for such extension. Recall that $\mathrm{ST}_R$ contains two copies of $\Theta$ as closed substacks, given by the equations $s=0$ and $t=0$:
\[\Theta_-=\Spec(R/\pi[t])/\BG_m\simeq \BA^1/\BG_m\quad \textup{ and }\quad \Theta_+=\Spec(R/\pi[s])/\BG_m\simeq \BA^1/\BG_m\,.\]
Note that $\BG_m$ acts with weight $-1$ in the definiton of $\Theta_-$ and with weight 1 in the definition of $\Theta_+$. Let us consider the filtrations associated to the restrictions of the map to $\Theta_-$ and $\Theta_+$:
\begin{align*}0=E_\ell\hookrightarrow E_{\ell-1}\hookrightarrow \ldots \hookrightarrow E_{1}\hookrightarrow E_0\\
0=E'_0\hookrightarrow E'_{1}\hookrightarrow \ldots \hookrightarrow E'_{\ell-1}\hookrightarrow E_\ell\end{align*}
Since $\Theta_-$ and $\Theta_+$ intersect in $0/\BG_m$ these filtrations have the same graded factors\footnote{This can also be seen from the explicit characterization of maps $\mathrm{ST}_R\to \FM_\CB$ in \cite[Corollary 7.14]{AHLH}. In the notation of loc. cit. the two filtrations that we call $\{E_i\}$ and $\{E_i'\}$ are $\{E_i/sE_{i-1}\}$ and $\{E_{i}/tE_{i+1}\}$, respectively. The graded factors in either case can be seen to be $E_{i}/(tE_{i+1}+sE_{i-1})$.}, i.e. $E_{i}/E_{i+1}\simeq E'_{i}/E'_{i-1}$, and we need to show that these graded factors are $\mu_s$-semistable of slope~$s$. Since $1_-\in \Theta_-$ and $1_+\in \Theta_+$ are mapped to $\FL_{\beta, m}^s$ we know that $E_0, E_\ell$ are $\mu_s$-semistable, and hence the claim follows from Lemma \ref{lem: oppositefiltrations}.
\end{proof}

\subsection{Pseudo $\Theta$-stratification}
\label{subsec: Thetastratification}

We will now show that $\FL_{\beta, m}$ admits a pseudo $\Theta$-stratification, in the sense of \cite{joyce}, whose semistable locus is $\FL_{\beta, m}^s$.

Given a filtration $E_\bullet$ in $\CB$ of $E\in \FL_{\beta, m}$ we call the tuple
\[\tau=\ch(E_\bullet)\coloneqq \big(\ch(E_1/E_0), \ch(E_2/E_1), \ldots ,\ch(E_\ell/E_{\ell-1})\big)\]
the topological type of $E_\bullet$. Every object of $\FL_{\beta, m}$ admits a $\mu_s$-Harder--Narasimhan filtration \cite[Lemma 3.9]{todaPTrationality}. We denote by $\textup{HN}_{\beta, m}(\mu_s)$ the set of possible Harder--Narasimhan types of objects in $\FL_{\beta, m}$ with respect to $\mu_s$; more precisely, elements of $\textup{HN}_{\beta, m}(\mu_s)$ have the form
\begin{equation}\label{eq: toptypefiltration}
\tau=\big((\beta_1, m_1), \ldots, (\beta_{i-1}, m_{i-1}), (-1, 0, \beta_i, m_i), (\beta_{i+1}, m_{i+1}), \ldots, (\beta_\ell, m_\ell)\big)
\end{equation}
with $\beta_j>0$ for $j\neq i$ and $\beta_i\geq 0$, $\sum_{i=1}^\ell \beta_i=\beta$, $\sum_{i=1}^\ell m_i=m$ and 
\[\frac{m_1}{\beta_1\cdot H}>\ldots>\frac{m_{i-1}}{\beta_{i-1}\cdot H}>s>\frac{m_{i+1}}{\beta_{i+1}\cdot H}>\ldots >\frac{m_{\ell}}{\beta_{\ell}\cdot H}\,.\]
One of the ingredients in the definition of pseudo $\Theta$-stratification is a partial order in $\textup{HN}_{\beta, m}(\mu_s)$. We construct a partial order inspired by Shatz' partial order on the set of Harder--Narasimhan types of vector bundles \cite{shatz}.

\begin{definition}\label{def: shatzline}
    Let $\tau$ be as in \eqref{eq: toptypefiltration}. We associate to $\tau$ the Shatz broken line of subobjects $\textup{SP}_1(\tau)$ and the Shatz broken line of quotients $\textup{SP}_2(\tau)$. The Shatz broken line of subobjects is defined as the line that connects the vertices 
    \[V_p=\left(\sum_{j=1}^p \beta_j\cdot H, \sum_{j=1}^p m_j \right)\in \BR^2\,,\quad p=0, 1, \ldots, i-1\]
    and is extended to $+\infty$ by adding the line of slope $s$
    \[V_{i-1}+t(1,s)\,,\quad t\geq 0\,.\]
    Similarly, the Shatz broken line of quotients is obtained by connecting the vertices  
    \[W_q=\left(\sum_{j=1}^q \beta_{\ell-j+1}\cdot H, \sum_{j=1}^q m_{\ell-j+1} \right)\in \BR^2\,,\quad q=0, 1, \ldots, \ell-i\,\]
and adding a line of slope $s$ at the end.

Given an object $E$ we denote by $\textup{SP}_j(E)$ the Shatz broken lines of the Harder--Narasimhan type of $E$.
\end{definition}

Note that if $\tau\in \textup{HN}_{\beta, m}(\mu_s)$ then $\textup{SP}_1(\tau)$ ($\textup{SP}_2(\tau)$) are the graphs of a piecewise linear concave (convex) function, i.e. the slopes of the segments are decreasing (increasing). 

\begin{definition}\label{def: partialorder}
 We define a partial $\leq $ order on $\textup{HN}_{\beta, m}(\mu_s)$ as follows: we say that $\tau\leq \tau'$ if $\textup{SP}_1(\tau)$ lies below $\textup{SP}_1(\tau')$ and $\textup{SP}_2(\tau)$ lies above $\textup{SP}_2(\tau')$ and, moreover, if $\textup{SP}_j(\tau)=\textup{SP}_j(\tau')$ for $j=1,2$ then $\tau=\tau'$. 
\end{definition}

\begin{lemma}\label{lem: semicontinuityHN}
    Let $F_\bullet$ be a filtration of $E$. Then $\ch(F_\bullet)\geq \ch(\textup{HN}(E))$. Moreover, if $\textup{SP}_j(F_\bullet)=\textup{SP}_j(\textup{HN}(E))$ for $j=1,2$ then $F_\bullet$ is the Harder--Narasimhan filtration of $E$. 
\end{lemma}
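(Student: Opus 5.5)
The plan is to mimic the classical Shatz argument for vector bundles, treating subobjects and quotients separately, since $\mu_s$ is only a weak stability condition and the distinguished factor of type $(-1,0,\beta_i,m_i)$ splits the Harder--Narasimhan filtration into a ``sub'' part and a ``quotient'' part. Write the HN filtration of $E$ as
\[
0=E_0\subset\cdots\subset E_\ell=E,
\]
with distinguished index $i$, and write the given filtration as $0=F_0\subset\cdots\subset F_r=E$ with distinguished index $k$. Every $F_p$ with $p<k$ is a pure 1-dimensional sheaf that strictly embeds in $E$. Dually, every $E/F_p$ with $p\ge k$ is a 1-dimensional quotient; after killing its maximal $0$-dimensional subsheaf, as in the proof of Lemma~\ref{lem: stricmor}, it becomes a pure strict quotient with smaller $\ch_3$ and the same $\beta$. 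So the vertices of $\textup{SP}_1(F_\bullet)$ are points $(\ch_2(G)\cdot H,\ch_3(G))$ of strict pure 1-dimensional subsheaves $G\hookrightarrow E$. The vertices of $\textup{SP}_2(F_\bullet)$ lie on or above points coming from pure strict quotients $E\twoheadrightarrow Q$.

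The key claim is therefore the following. \emph{For every strict 1-dimensional subsheaf $G\hookrightarrow E$, the point $(\ch_2(G)\cdot H,\ch_3(G))$ lies on or below $\textup{SP}_1(E)$. For every strict pure quotient $Q$, its point lies on or above $\textup{SP}_2(E)$.}

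To prove the subobject half, I would intersect $G$ with the HN pieces. Set $G_j=G\cap E_j$ for $j\le i-1$, computed in $\CA$; these are subsheaves of the 1-dimensional sheaf $E_{i-1}$, hence 1-dimensional, and they are pure because $G$ is. The successive quotients $G_j/G_{j-1}$ embed into $E_j/E_{j-1}$, which are $\mu_H$-semistable of slope $\mu_j$. Hence $\ch_3(G_j/G_{j-1})\le\mu_j\,\ch_2(G_j/G_{j-1})\cdot H$. The image of $G$ in $E/E_{i-1}$ is a 1-dimensional subobject of a $\mu_s$-semistable object. After saturation, to make it strict, it has slope $\le s$. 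Summing these inequalities gives the standard concavity estimate: the point of $G$ lies below the concave broken line with slopes $\mu_1>\cdots>\mu_{i-1}>s$, which is exactly $\textup{SP}_1(E)$. The quotient half is the dual argument, or equivalently the subobject statement applied to $\delta(E)$ via Theorem~\ref{thm: propertiesLspaces}.4, which exchanges subobjects and quotients and sends $s$ to $-s$.

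Given the claim, all vertices of $\textup{SP}_1(F_\bullet)$ lie below the concave function $\textup{SP}_1(E)$. The final rays of both lines have slope $s$, so $\textup{SP}_1(F_\bullet)$, which is not necessarily concave, lies below $\textup{SP}_1(E)$; similarly $\textup{SP}_2(F_\bullet)$ lies above $\textup{SP}_2(E)$. This gives the inequality in the sense of Definition~\ref{def: partialorder}.

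For the rigidity statement, suppose $\textup{SP}_j(F_\bullet)=\textup{SP}_j(E)$ for $j=1,2$. Then the vertices of $F_\bullet$ with $p<k$ realize the vertices $V_p$, and equality in the concavity estimate forces $F_p=E_p$ for those $p$. The argument is by induction on $p$: $F_1$ attains the maximal slope $\mu_1$ with the maximal degree, so it must equal $E_1$, by the uniqueness of the maximal destabilizing subsheaf. The same reasoning on the quotient side shows $E/F_p=E/E_p$ for $p\ge k$. Hence the two filtrations coincide.

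The main obstacle I expect is the strictness bookkeeping in $\CB$, which is not abelian. The intersections $G\cap E_j$ and the images in $E/E_{i-1}$ need not be strict, and 0-dimensional torsion can appear. I would handle this exactly as in Lemma~\ref{lem: stricmor}, by passing to maximal pure quotients or saturations; this only moves points in the favorable direction, and the slope inequalities survive.
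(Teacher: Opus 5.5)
Your reduction is the same as the paper's. Both arguments reduce to showing that the point $(\ch_2(G)\cdot H,\ch_3(G))$ of a 1-dimensional subobject $G$ of $E$ lies on or below $\textup{SP}_1(E)$, and dually for quotients and $\textup{SP}_2(E)$. Where you differ is in how this bound is proved. The paper runs Shatz's induction on the length of $\textup{HN}(E)$, using $E_1\cap G\subseteq E_1$ and $(E_1+G)/E_1\subseteq E/E_1$. It reads off rigidity from the same induction: at a vertex one must have $G=0$ or $E_1\subseteq G$. You instead filter $G$ once by $G\cap E_j$, finish with the greedy concavity estimate, and handle rigidity separately via the maximal destabilizing subobject. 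Both routes are standard and both work. Yours gives the bound in one step; the induction gives the equality case essentially for free.

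One step is wrong as written. $E/E_{i-1}$ is $\mu_s$-semistable only when $i=\ell$, because it still has the 1-dimensional quotient factors $E_j/E_{j-1}$, $j>i$, of slope $<s$. The fix is to push your filtration one step further:
the piece $(G\cap E_i)/(G\cap E_{i-1})$ is a 1-dimensional sheaf inside the $\mu_s$-semistable object $E_i/E_{i-1}$. Its saturation there differs from it by a 0-dimensional sheaf, by Toda's Lemma 3.16 and the torsion pair $\langle\CB',\CB\rangle$, so its slope is $\le s$.
The remaining piece $G/(G\cap E_i)$ embeds in the pure sheaf $E/E_i$, all of whose HN slopes are $<s$.
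With this, your estimate goes through unchanged. Incidentally, for $p\ge k$ the quotient $E/F_p$ is a 1-dimensional sheaf in $\CB$, hence already pure, so no torsion needs to be killed.

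Two remarks on the statement itself:
First, what you prove (as does the paper) is $\ch(F_\bullet)\le\ch(\textup{HN}(E))$ in the order of Definition~\ref{def: partialorder}. This is the direction used in Theorem~\ref{thm: Thetastratification}. For non-semistable $E$, the trivial filtration has the minimal, semistable type, which shows the printed $\geq$ is a slip.
Second, your rigidity step reads $\textup{SP}_j(F_\bullet)=\textup{SP}_j(\textup{HN}(E))$ as equality of vertex sequences. That reading is necessary: a refinement of $\textup{HN}(E)$ by steps of equal slope has the same broken lines as subsets of $\BR^2$.
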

\begin{proof}
    To prove the inequality it is enough to show that $(\ch_2(F)\cdot H, \ch_3(F))$ is below $\textup{SP}_1(E)$ for any 1-dimensional sheaf $F$ which is a subobject of $E$ and above $\textup{SP}_2(E)$ for any 1-dimensional sheaf $F$ which is a quotient of $E$. We explain the statement for subobjects, since quotients are entirely analogous. The proof is an easy adaptation of \cite[Theorem 2]{shatz}, by induction on the length of the Harder--Narasimhan filtration $E_\bullet=\textup{HN}(E)$ of $E$. Let $E_1$ the be first object in the Harder. If $\rk(E_1)=-1$ that means that $E$ does not admit destabilizing 1-dimensional objects of slope $>s$ and that $\textup{SP}_1(E)$ is just a line of slope $s$, which makes it clear that $F$ is below $\textup{SP}_1(E)$. Otherwise, $E_1$ is a 1-dimensional sheaf and the induction step is exactly as in Shatz, using the induction hypothesis on $E_1\cap F\subseteq E_1$ and $E_1\vee F=(E_1\oplus F)/ (E_1\cap F)\subseteq E/E_1$. It is easy to see from the proof that if $(\ch_2(F)\cdot H, \ch_3(F))$ is a vertex of $\textup{SP}_1(E)$ then either $F=0$ or $E_1\subseteq F$, so by induction $F$ must be one of the objects appearing in the Harder--Narasimhan filtration of $E$, which shows the second part of the statement. 
    \end{proof}

The proof of the next theorem, which constructs a pseudo $\Theta$-stratification, follows closely the ideas in the proof of \cite[Theorem 2.2.2]{HLstructure}, with some of the simplifications that are also discussed there (e.g. $\Theta$-reductivity and quasicompact flag spaces). Since we are working with a pseudo $\Theta$-stratification instead of a regular $\Theta$-stratification, as in \cite{HLstructure}, some minor tweaks are necessary.

\begin{theorem}\label{thm: Thetastratification}
    There is a pseudo $\Theta$-stratification on $\FL_{\beta, m}$, indexed by the partially ordered set $(\textup{HN}_{\beta, m}(\mu_s), \leq)$, such that the semistable locus is $\FL_{\beta, m}^s$. 
\end{theorem}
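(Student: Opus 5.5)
We follow the proof of \cite[Theorem 2.2.2]{HLstructure}, in the simplified form where the ambient stack is $\Theta$-reductive (Proposition \ref{prop: MBvaluative}). For each $\tau\in \textup{HN}_{\beta, m}(\mu_s)$ we let $\Filt(\FL_{\beta, m})_\tau$ be the union of connected components of $\Filt(\FM_\CB)$ parametrizing filtrations of topological type $\tau$, with weights decreasing as the slope decreases. Inside it we take the open substack $\mathcal S_\tau$ of filtrations whose graded pieces are $\mu_H$-semistable 1-dimensional sheaves of the prescribed slopes, together with a $\mu_s$-semistable piece of type $(-1,0,\beta_i,m_i)$. Openness follows from openness of semistability for 1-dimensional sheaves and of $\FL^s$ (Theorem \ref{thm: propertiesLspaces}(1)), pulled back along $\textup{ev}_0$. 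By uniqueness of Harder--Narasimhan filtrations \cite[Lemma 3.9]{todaPTrationality}, the $\BC$-points of $\mathcal S_\tau$ are exactly the HN filtrations of type $\tau$, and $\textup{ev}_1\colon \mathcal S_\tau\to \FL_{\beta, m}$ is injective on points. We set $\FL_{\beta,m}^{\leq\tau}$ (resp.\ $\FL_{\beta,m}^{\geq \tau}$) to be the locus of objects whose HN type is $\leq\tau$ (resp.\ $\geq \tau$). The order is arranged so that larger $\tau$ is more unstable and the minimal element is the semistable type $((-1,0,\beta,m))$. Hence the minimal stratum is $\FL_{\beta, m}^s$.

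The key steps, in order, are as follows.

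First, \textbf{finiteness and boundedness}. Since $\FL_{\beta,m}$ parametrizes a bounded family (Toda's finite type results), only finitely many HN types occur. By Theorem \ref{thm: propertiesLspaces}(3) and boundedness of 1-dimensional sheaf quotients, the poset $\textup{HN}_{\beta,m}(\mu_s)$ that is actually realized is finite.

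Second, \textbf{upper semicontinuity of HN type}. We show that $\FL_{\beta,m}^{\geq\tau}$ is closed, equivalently that the Shatz lines $\textup{SP}_1$ and $\textup{SP}_2$ jump only upward (respectively downward) under specialization. Using Lemma \ref{lem: semicontinuityHN}, it suffices to show that the image of $\textup{ev}_1\colon\Filt(\FL_{\beta,m})_{\tau'}\to\FL_{\beta,m}$ is closed for each $\tau'$; the locus of objects admitting a filtration whose Shatz lines are not below (resp.\ above) $\textup{SP}_j(\tau)$ is then a finite union of such images. Closedness comes from properness of $\textup{ev}_1$ on $\Filt(\FM_\CB)$. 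By \cite[Proposition 1.3.1]{HLstructure} and Proposition \ref{prop: MBvaluative}, $\textup{ev}_1$ satisfies the existence part of the valuative criterion, since $\Theta$-reductivity gives extension over $\Theta_R\setminus 0$. Quasi-compactness on fixed-type components is the ``quasicompact flag spaces'' input, coming from boundedness of Quot-type schemes of 1-dimensional quotients.

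Third, \textbf{strata are closed immersions of $\mathcal S_\tau$}. We show that $\textup{ev}_1\colon\mathcal S_\tau\to \FL_{\beta,m}^{\leq\tau}$ is a closed immersion onto the closed complement $\FL_{\beta,m}^{\leq\tau}\setminus\bigcup_{\tau'<\tau}\FL^{\leq\tau'}_{\beta,m}$. This is where the pseudo (rather than genuine) structure is used. We only need $\textup{ev}_1$ to be a universal homeomorphism onto its image, or an isomorphism on the reduced locus, as in Joyce's definition, and not the infinitesimal uniqueness of HN filtrations. Properness holds as in the second step. Injectivity on points follows from uniqueness of HN filtrations. Injectivity on stabilizers follows from $\Hom$-vanishing between HN factors of decreasing slope: for $\mu_H$-semistable 1-dimensional sheaves this is standard, and for maps between a sheaf and the $(-1,0,\ldots)$ piece it is Definition \ref{def: todastability}. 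These combine to give that $\textup{ev}_1$ is a monomorphism on geometric points, hence a closed immersion on reductions. The last point is the check that the strict inequalities in the HN condition are preserved, which again uses Lemma \ref{lem: semicontinuityHN}.

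I expect the main obstacle to be the second step: showing that the partial order via Shatz lines really controls specialization. In particular, a limit of HN filtrations of type $\tau$ gives, by $\Theta$-reductivity, some filtration of the special object. That filtration is not a priori HN, and Lemma \ref{lem: semicontinuityHN} is needed to conclude that the special HN type is $\geq\tau$ in both $\textup{SP}_1$ and $\textup{SP}_2$ simultaneously. A secondary difficulty is that the mixed nature of the weak stability $\mu_s$ requires treating subobjects and quotients separately, which is why two broken lines appear.
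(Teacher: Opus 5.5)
Your outline follows the paper's route: Halpern-Leistner's Theorem 2.2.2 with $\Theta$-reductivity and quasi-compact flag spaces, Lemma \ref{lem: semicontinuityHN} to control specialization, and uniqueness of HN filtrations for the strata. However, two steps fail as written.

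\textbf{Finiteness.} Your first step is false. Only $\FL^s_{\beta,m}$ is of finite type (Theorem \ref{thm: propertiesLspaces}(1)); $\FL_{\beta,m}$ is not. Infinitely many HN types occur as soon as $\beta=\beta_1+\beta_2$ is reducible. For example, take $F\oplus\CO_X[1]\oplus G$ with $F,G$ pure 1-dimensional of classes $\beta_1,\beta_2$, twisted by suitable opposite multiples of $H$ so that $\ch_3(F)\to+\infty$ while $\ch_3(F)+\ch_3(G)=m$. Theorem \ref{thm: propertiesLspaces}(3) only bounds the middle factor. So the ``finite union of closed images'' in your second step is unjustified. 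The paper uses local finiteness instead: by \cite[Proposition 3.16]{todalimit}, a finite type open $U\subseteq\FL_{\beta,m}$ meets $\textup{im}(|\Filt(\FL)_{\tau'}|\to|\FL|)$ for only finitely many $\tau'$. Openness of $|\FL|_{\leq\tau}$ is then checked on such $U$, where the union is finite and each image is closed by properness.

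\textbf{Strata.} Your reading of ``pseudo'' is off. The only relaxation is that the index set is partially ordered. Each stratum must still be a $\Theta$-stratum, i.e.\ $\textup{ev}_1\colon\mathfrak S_\tau\to\FL_{\leq\tau}$ must be a closed immersion. Stopping at ``closed immersion on reductions'' therefore does not prove the theorem.

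The missing input is \cite[Corollary 2.1.9]{HLstructure}: over $\BC$, a weak $\Theta$-stratum ($\textup{ev}_1$ finite and radicial) is automatically a $\Theta$-stratum. So no infinitesimal uniqueness of HN filtrations has to be checked separately. Radicial follows from properness plus full faithfulness on $\BC$-points (Lemma \ref{lem: radicial}). The automorphism statement you need is surjectivity of $\textup{Aut}(E_\bullet)\to\textup{Aut}(E)$, i.e.\ every automorphism of $E$ preserves its HN filtration. Injectivity, which is what you stated, is automatic since the $E_j\to E$ are monomorphisms.

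You also need your $\mathcal S_\tau$ to be a union of connected components of $\Filt(\FL_{\leq\tau})$, not merely open in $\Filt(\FL)_\tau$. To see this, identify it with $\Filt(\FL)_\tau\cap\Filt(\FL_{\leq\tau})$. Its points are type-$\tau$ filtrations with $\textup{gr}(E_\bullet)$ of HN type $\leq\tau$. Since $\textup{gr}(E_\bullet)$ carries the split filtration of type $\tau$, Lemma \ref{lem: semicontinuityHN} forces that filtration to be its HN filtration. Hence the graded pieces are semistable of the prescribed slopes, which is exactly how the paper identifies the strata.
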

\begin{proof}
To ease the notation let $\FL=\FL_{\beta, m}$. For $\tau\in \textup{HN}_{\beta, m}(\mu_s)$ let $\Filt(\FL)_\tau$ be the open and closes substack of $\Filt(\FL)$ parametrizing filtration of type $\tau$ and some arbitrary fixed choice of weights. 

We claim that the map forgetting the filtration $\Filt(\FL)_\tau\to \FL$ is proper for any $\tau\in  \textup{HN}_{\beta, m}(\mu_s)$. The fact that it satisfies the universal criterion for properness is equivalent to $\FL$ being $\Theta$-reductive, which we have shown in Proposition \ref{prop: MBvaluative}. It is also separated and locally finite type by \cite[Proposition 1.1.13]{HLstructure}. Moreover, \cite[Propisition 3.16]{todalimit} combined with the fact that the Quot scheme (with a fixed topological type) is quasicompact implies that the map $\Filt(\FL)_{\tau}\to \FL$ is also quasicompact, and therefore this map is proper.

We denote by $|\FX|$ the topological space of points of a stack $\FX$. We define the subset
\[|\FL|_{\leq \tau}=|\FL|\,\setminus \bigcup_{\tau'\nleq\tau} \textup{im}\left(|\Filt(\FL)_{\tau'}|\to |\FL|\right)\]
The $\BC$-points of $|\FL|_{\leq \tau}$ correspond to objects $E$ with $\textup{HN}(E)\leq \tau$: If $\textup{HN}(E)\leq \tau$ then $E$ cannot be in the image of 
$|\Filt(\FL)_{\tau'}|\to |\FL|$ for $\tau'\leq \tau$, since otherwise by Lemma \ref{lem: semicontinuityHN} we would have $\tau'\leq \textup{HN}(E)\leq \tau$, a contradiction. Conversely, if $\textup{HN}(E)\nleq \tau$ then clearly $E$ is not in $|\FL|_{\leq \tau}$.

We claim that $|\FL|_{\leq \tau}$ is open. Since $\FL$ is locally finite over $\BC$ it is enough to prove that $|U|\cap |\FL|_{\leq \tau}$ is open for any finite type open substack $U\subseteq \FL$. For finite type $U$ it follows from \cite[Proposition 3.16]{todalimit} that $|U|$ intersects $\textup{im}\hspace{-0.05cm}\left(|\Filt(\FL)_{\tau'}|\to |\FL|\right)$ only for finitely many $\tau'\in \textup{HN}_{\beta, m}(\mu_s)$.\footnote{This is closely related to the ``local finiteness'' condition in \cite{HLstructure}.}

By properness of the map $\Filt(\FL)_{\tau'}\to \FL$ it follows that $\textup{im}\big(|\Filt(\FL)_{\tau'}|\to |\FL|\big)$ is closed, and hence
\[|U|\cap |\FL|_{\leq \tau}=|U|\,\setminus \bigcup_{\tau'\nleq\tau}\left( |U|\cap \textup{im}\big(|\Filt(\FL)_{\tau'}|\to |\FL|\right)\big)\]
is open in $|U|$ since the union on the right hand side is finite.

Let us consider now the corresponding open substack $\FL_{\leq \tau}\subseteq \FL$ whose set of points is $|\FL|_{\leq \tau}$. Then $\Filt(\FL_{\leq \tau})$ is an open substack of $\Filt(\FL)$ and we define
\[\mathfrak S_\tau=\Filt(\FL)_\tau\cap \Filt(\FL_{\leq \tau})\,,\]
which is a union of connected components of $\Filt(\FL_{\leq \tau})$ since $\Filt(\FL)_\tau$ is a union of connected components of $\Filt(\FL)$. We will now show that $\mathfrak S_\tau$ is a $\Theta$ stratum in $\FL_{\leq \tau}$, i.e. $\textup{ev}_1\colon \mathfrak S_\tau\to \FL_{\leq \tau}$ is a closed embedding. By \cite[Corollary 2.1.9]{HLstructure} it is enough to show that $\mathfrak S_\tau$ is a weak $\Theta$-stratum, i.e. that the map above is radicial and finite. We have shown already that the map is proper, so it is enough to show it is radicial. By Lemma \ref{lem: radicial} below we just need to verify that the morphism is fully faithful on the groupoids of $\BC$-points; note that \cite[Proposition 1.1.13]{HLstructure} guarantees the technical assumptions necessary to apply Lemma \ref{lem: radicial}. 

Now the $\BC$-points of $\mathfrak S_\tau$ correspond, by \cite[Proposition 1.3.1 (3)]{HLstructure}, to filtrations $E_\bullet$ of topological type $\tau$ such that the Harder--Narasimhan type of $\textup{gr}(E_\bullet)\coloneqq \bigoplus_{i=1}^\ell E_i/E_{i-1}$ is $\leq \tau$. Note that $\textup{gr}(E_\bullet)$ comes itself with a filtration $\{\bigoplus_{i=1}^j E_i/E_{i-1}\}_j$ of type $\tau$, so by Lemma~\ref{lem: semicontinuityHN} this is the Harder--Narasimhan filtration of $\textup{gr}(E_\bullet)$, which implies that $E_\bullet$ is the Harder--Narasimhan filtration of $E=E_\ell$. The statement that $\textup{ev}_1\colon \mathfrak S_\tau\to \FL_{\leq \tau}$ induces a fully faithful functor at the level of $\BC$-points is now a consequence of the fact that Harder--Narasimhan filtrations are canonical and unique. 
\end{proof}

\begin{lemma}\label{lem: radicial}
    Let $\CS, \CX$ be algebraic stacks locally of finite type over $\BC$ and let $f\colon \CS\to \CX$ be a quasi-separated morphism representable by algebraic spaces. If the induced map $f(\BC)\colon \CS(\BC)\to \CX(\BC)$ on the groupoids of $\BC$-points is fully faithful then $f$ is radicial.
\end{lemma}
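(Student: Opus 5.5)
To show $f$ is radicial, I will verify the standard criterion: for every field $k$, the map $\CS(k)\to\CX(k)$ on isomorphism classes is injective, and the residue field extensions are purely inseparable. Since $f$ is representable by algebraic spaces, it is enough to check this after base change along any morphism $\Spec k\to\CX$. So fix a field $k$ and a point $x\colon\Spec k\to\CX$, and set $S_x=\CS\times_{\CX}\Spec k$. This is an algebraic space, quasi-separated and locally of finite type over $k$. The claim that $f$ is radicial becomes the claim that $S_x$ has at most one point and that its residue field is purely inseparable over $k$. Over characteristic $0$ the inseparability condition just says that any point of $S_x$ has residue field $k$ itself, i.e.\ the reduced structure of $S_x$ is $\Spec k$ or empty.

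The first step is to reduce to $k=\BC$, or at least to algebraically closed fields. Any point of $\CX$ with residue field $k$ factors through a finite type $\BC$-scheme, and radicial morphisms are characterized on geometric points: $f$ is radicial if and only if, for every algebraically closed field $\Omega$, the map on $\Omega$-points of fibers is injective. I will then use a spreading-out argument. Suppose a geometric fiber $S_{\bar x}$ over $\Omega$ has two distinct $\Omega$-points, or a point with nontrivial residue field extension after base change. Both are data of finite type over $\BC$, so they spread out over a finite type $\BC$-algebra $A\subseteq\Omega$. Specializing to a $\BC$-point of $\Spec A$ (Nullstellensatz), I expect to get two non-isomorphic $\BC$-points of $\CS$ mapping to isomorphic $\BC$-points of $\CX$, or to a nontrivial situation in the $\BC$-fiber. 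Because $S_x$ is locally of finite type and quasi-separated, the locus where two sections differ is constructible, so distinctness can be preserved under specialization on a dense open of $\Spec A$.

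The second step handles $\BC$-points directly. For $x\in\CX(\BC)$, the $\BC$-points of $S_x$ are pairs $(s,\phi\colon f(s)\cong x)$ up to isomorphism in $s$ compatible with $\phi$. Essential surjectivity is not needed. Full faithfulness gives that any two such pairs are isomorphic, since $\phi'^{-1}\phi$ lifts uniquely to $s\cong s'$. So $S_x(\BC)$ has at most one element. Moreover, $S_x$ is an algebraic space locally of finite type over $\BC$ with at most one $\BC$-point, hence with at most one closed point. A quasi-separated algebraic space locally of finite type over $\BC$ whose only $\BC$-point is unique has $|S_x|$ equal to a point, because every nonempty locally closed subset contains a $\BC$-point (Jacobson property). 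That point has residue field $\BC$.

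The main obstacle is making the spreading-out in the first step rigorous for stacks. In particular, I need to ensure that ``two distinct geometric points of a fiber'' specializes to ``two distinct $\BC$-points of some fiber over a $\BC$-point of $\CX$.'' I would try the following route. Choose a smooth atlas $U\to\CX$ from a finite type scheme, so that $\CS\times_\CX U\to U$ is a quasi-separated morphism of algebraic spaces locally of finite type. Radiciality is then local on $U$ and reduces to the statement that this map is universally injective. For algebraic spaces locally of finite type over $\BC$, universal injectivity can be checked on $\BC$-points via the Jacobson property and Chevalley constructibility of the diagonal's image. Finally, I would translate $\BC$-points of the base-changed space back into $\BC$-points of $\CS$ with an isomorphism to a $\BC$-point of $\CX$, where the second step applies.
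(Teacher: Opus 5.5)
Your final route is exactly the paper's proof: pass to a smooth atlas $U\to\CX$ (locally of finite type suffices), get injectivity of $(\CS\times_\CX U)(\BC)\to U(\BC)$ from full faithfulness by lifting $\phi'^{-1}\phi$, and conclude universal injectivity because the image of the diagonal of $\CS\times_\CX U\to U$ is constructible by Chevalley (quasi-separatedness makes the diagonal quasi-compact) and contains every $\BC$-point of a Jacobson space. The spreading-out in your first step and the fiberwise analysis of $|S_x|$ in your second step are unnecessary, since the diagonal argument already handles points of $\CX$ with arbitrary residue fields.
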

\begin{proof}
Let $X\to \CX$ be a smooth cover by an algebraic space and let $S=\CS\times_{\CX} X$, which is also an algebraic space by the representability of $f$. The condition that $f(\BC)$ is fully faithful implies that $f\colon S\to X$ is injective on $\BC$ points, so we have reduced ourselves to the case of algebraic spaces. 

By \cite[0484 and 040X]{stacks}, $S\to X$ being radicial is equivalent to surjectivity of the diagonal map $\Delta\colon S\to S\times_X S$, which is of finite presentation by the assumptions and \cite[0818]{stacks}. By Chevalley's theorem \cite[0ECX]{stacks} the image of $|\Delta|$ is a constructible subset of $|S\times_X S|$. The hypothesis that $f$ is injective on $\BC$ points means that every $\BC$-point on $|S\times_X S|$ is in the image of $\Delta$. Since $S\times_X S$ is a Jacobson algebraic space, a constructible subset that contains all the closed points is necessarily the whole space, otherwise the complement would contain a locally closed subset without $\BC$-points.
\end{proof}

We can finally conclude the existence of proper good moduli spaces by using the results from \cite{AHLH}.

\begin{theorem}\label{thm: goodmoduli}
    The stacks $\FL_{\beta, m}^s$ admit proper good moduli spaces.
\end{theorem}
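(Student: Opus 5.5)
The plan is to apply the existence criterion of \cite{AHLH}: an algebraic stack of finite type over $\BC$ with affine diagonal which is $\Theta$-reductive and $S$-complete admits a separated good moduli space. If the stack also satisfies the existence part of the valuative criterion for universal closedness, that good moduli space is proper.

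Finite type of $\FL_{\beta, m}^s$ is Theorem \ref{thm: propertiesLspaces}(1). For affine diagonal, $\FM_X$ has affine stabilizers, and the diagonal of the moduli of objects in a heart with Hom spaces finite dimensional is affine, since it is locally cut out by $\mathrm{Isom}$ inside vector bundle stacks of $\Hom$'s. I would cite the corresponding statement in \cite[Section 7]{AHLH} for $\FM_\CA$ and restrict it along the open embedding $\FL_{\beta, m}^s\subseteq \FM_\CA$. $\Theta$-reductivity and $S$-completeness with respect to essentially finite type DVRs are Proposition \ref{prop: valuativestability}. This restricted class of DVRs is exactly what the theorem of \cite{AHLH} needs for stacks locally of finite type over $\BC$. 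Together these give a separated good moduli space $L_{\beta, m}^s$, which is of finite type because the stack is.

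Properness then reduces to the existence part of the valuative criterion for $\FL_{\beta, m}^s$. This is the step I expect to be the main obstacle: Proposition \ref{prop: MBvaluative} only gives extensions $\Spec R\to\FM_\CB$, the limit may be unstable, and the usual argument from \cite[Proposition 6.14]{AHLH} via numerical invariants is unavailable.

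Instead I would run a Langton-type argument using the pseudo $\Theta$-stratification of Theorem \ref{thm: Thetastratification}. Start with a family $\CE$ over $R$ in $\FL_{\beta, m}$ with semistable generic fiber. If the special fiber is unstable with HN type $\tau$, perform an elementary modification along the first destabilizing 1-dimensional subobject, or the last quotient, of the HN filtration of the special fiber. Concretely, replace $\CE$ by the kernel of $\CE\to \CE_x\to Q$ and check that the result is again a flat family in $\CB$; here Lemma \ref{lem: closedsub} controls membership in $\CB$. Then show that the HN type of the new special fiber strictly decreases in the partial order of Definition \ref{def: partialorder}, via Shatz polygons as in Lemma \ref{lem: semicontinuityHN}.

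Since $\FL_{\beta, m}$ restricted to a finite type neighbourhood meets only finitely many strata (\cite[Proposition 3.16]{todalimit}), one expects the process to terminate. The infinite-descending-chain case must be excluded as in Langton's proof: it would produce a destabilizing subsheaf of the generic fiber, contradicting its semistability.

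Alternatively, one could try to shortcut this by invoking the general result in \cite{AHLH} that a $\Theta$-stratified, $\Theta$-reductive, $S$-complete stack whose ambient stack satisfies the existence criterion has semistable locus satisfying it too, provided the strata are well-ordered. This would need adapting to pseudo $\Theta$-stratifications, and I would try that first before the explicit elementary-modification argument.
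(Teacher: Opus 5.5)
Your proposal is essentially the paper's proof: the paper likewise applies the main theorem of \cite{AHLH} (checked on essentially finite type DVRs), takes $\Theta$-reductivity and $S$-completeness from Proposition~\ref{prop: valuativestability}, and obtains universal closedness exactly by your ``shortcut'', i.e.\ \cite[Corollary 6.12]{AHLH} adapted to pseudo $\Theta$-stratifications via \cite[Theorem 3.3.7]{joyce}, applied to Theorem~\ref{thm: Thetastratification} and to Proposition~\ref{prop: MBvaluative} for the ambient $\FL_{\beta, m}$. So lead with that route rather than the hand-made Langton argument, whose termination you only sketch (note $\FL_{\beta,m}$ is not of finite type, so the modified special fibres are not a priori confined to one finite-type open); your remark on affine diagonal addresses a hypothesis the paper leaves implicit.
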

\begin{proof}
By the main result of \cite{AHLH} this is equivalent to $\FL_{\beta, m}^s$ being $\Theta$-reductive, $S$-complete and satisfying the valuative criterion for universal closedness. By the same reasoning as in the proof of \cite[Theorem 7.23]{AHLH} it is enough to verify the 3 valuative criteria for DVRs essentially of finite type. The $\Theta$-reductive and $S$-complete statements are Proposition \ref{prop: valuativestability}. By \cite[Corollary 6.12]{AHLH} (see also \cite[Theorem 3.3.7]{joyce} for the adaptation to pseudo $\Theta$-stratifications) the valuative criteria for universal closedness follows from the existence of pseudo $\Theta$-stratification proven in Theorem \ref{thm: Thetastratification} and the result for the ambient stack $\FL_{\beta, m}$, which we proved in Proposition \ref{prop: MBvaluative}.\qedhere
\end{proof}

\subsection{Quasi-smoothness}\label{subsec: quasismooth}

The last point we want to address in this section is the quasi-smoothness of the stacks $\FL_{\beta,m}^s$. It is well-known that moduli stacks of semistable 1-dimensional sheaves supported in curve class $\beta$ are quasi-smooth as long as we have the positivity assumption in the statement of Theorem \ref{thm: main} holds. Indeed, quasi-smoothness is equivalent to $\Ext^3(F, F)=0$ for all semistable sheaves $F$, which is easily seen with Serre duality and the definition of stability. 

\begin{theorem}\label{thm: quasismooth}
    Suppose that $\beta'\cdot K_X<0$ for any $0< \beta'\leq \beta$ and $h^{3,0}(X)=0$. Then the moduli stack $\FL_{\beta, m}^s$ is quasi-smooth.
\end{theorem}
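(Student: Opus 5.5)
We check quasi-smoothness pointwise. Since $\FL_{\beta, m}^s$ is open in $\FM_X$, its derived structure is the one induced from $\FM_X$. Its tangent complex at a $\BC$-point $E$ is therefore $\mathrm{RHom}(E,E)[1]$, and quasi-smoothness amounts to
\[\Ext^i(E,E)=0\quad\textup{for } i\leq -1 \textup{ and } i\geq 3.\]
Since $E$ lies in the heart $\CA$, negative self-extensions vanish. For $i\geq 4$ we use Serre duality, $\Ext^i(E,E)\simeq \Ext^{3-i}(E, E\otimes K_X)^\vee$. Tensoring by a line bundle preserves $\Coh_{\leq 1}(X)$, $\Coh_{\geq 2}(X)$ and $\CB'$, hence also $\CA$ and $\CB$. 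So $E\otimes K_X\in \CB$, and these groups vanish because they are negative Ext's between objects of one heart. Everything therefore reduces to showing
\[\Ext^3(E,E)\simeq\Hom(E,E\otimes K_X)^\vee=0\]
for every $\mu_s$-semistable $E$ of type $(-1,0,\beta,m)$.

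Let $f\colon E\to E\otimes K_X$ be a morphism. First I would show that $\mathcal H^{-1}(f)=0$. For $E\in\CB$ of rank $-1$, the sheaf $\mathcal H^{-1}(E)$ is torsion free of rank one, and its reflexive hull is a line bundle. This line bundle is trivial because $\ch_1=0$ and $\Pic(X)\hookrightarrow H^2(X)$, which holds since $h^{1,0}(X)=0$. The map $\mathcal H^{-1}(f)$ thus extends to a map of reflexive hulls $\CO_X\to K_X$, and such a map vanishes because $h^{3,0}(X)=0$. Consequently $f$ factors through $E\to \mathcal H^0(E)$, a sheaf in $\Coh_{\leq 1}(X)$. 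The image $G$ of $f$ in $\CA$ is then a quotient of a sheaf in $\Coh_{\leq1}(X)$, so $G\in \Coh_{\leq 1}(X)$. As a subobject of $E\otimes K_X\in\CB$ it has no $0$-dimensional subsheaves, so $G$ is pure $1$-dimensional. In particular, the case $E=\CO_X[1]$ is covered by this argument, and it is exactly where $h^{3,0}=0$ enters.

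Now suppose $f\neq 0$, so $G\neq 0$, and let $\beta_G=\ch_2(G)>0$. The map $E\twoheadrightarrow G$ is a strict epimorphism in $\CB$, since $\CB$ is closed under subobjects and $G\in\CB$. Semistability of $E$ therefore gives $\mu_H(G)\geq s$. On the other hand, $G\otimes K_X^{-1}\hookrightarrow E$ is a monomorphism in $\CA$ whose slope is
\[\mu_H(G\otimes K_X^{-1})=\mu_H(G)-\frac{K_X\cdot\beta_G}{H\cdot\beta_G}>\mu_H(G)\geq s,\]
and this strict inequality is where the positivity hypothesis $-\beta'\cdot K_X>0$ is used, with $\beta'=\beta_G\leq\beta$. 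It remains to turn this into a contradiction with condition (2) of Definition \ref{def: todastability}, which requires a \emph{strict} monomorphism in $\CB$.

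This is the step I expect to be the main obstacle. The cokernel $Q$ of $G\otimes K_X^{-1}\hookrightarrow E$ in $\CA$ need not lie in $\CB$, so $G\otimes K_X^{-1}$ must be replaced by a saturation. I would decompose $Q$ via the torsion pair $\langle\CB',\CB\rangle$, with torsion part $T\in\CB'$, and take $G''$ to be the preimage of $T$ in $E$. This makes $G''\hookrightarrow E$ strict. The claim I would need is that $T$ is $0$-dimensional, i.e. that no shifted pure $2$-dimensional sheaf occurs. I would try to prove it using Toda's \cite[Lemma 3.16]{todaPTrationality}: $Q$ has type $(-1,0,\beta-\beta_G,\ast)$, and a subobject $F[1]\subseteq Q$ with $F$ pure $2$-dimensional would force a nonzero $\ch_1$ on the complementary piece, which is incompatible with $G''$ being an extension of $T$ by a $1$-dimensional sheaf inside $E\in\CB$. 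Granting this claim, $G''$ is a pure $1$-dimensional sheaf with $\ch_2(G'')=\beta_G$ and $\ch_3(G'')\geq\ch_3(G\otimes K_X^{-1})$. Then
\[\mu_H(G'')\geq \mu_H(G\otimes K_X^{-1})>s,\]
which contradicts the semistability of $E$. Hence $f=0$ and $\Ext^3(E,E)=0$.
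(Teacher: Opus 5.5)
Your proof is correct and is essentially the paper's argument. The reduction to $\Hom(E,E\otimes K_X)=0$ is the same. In the case where the image $G$ is a pure $1$-dimensional sheaf, you use the same saturation through the torsion pair $\langle \CB',\CB\rangle$; your $G''$, $T$, $Q'$ are the paper's $Q_2'$, $E_2''\otimes K_X^{-1}$, $E_2'\otimes K_X^{-1}$. You then reach the same chain $s\ge\mu_H(G'')\ge\mu_H(G\otimes K_X^{-1})>\mu_H(G)\ge s$.

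The claim you leave as a sketch is settled in the paper exactly along your lines. Since $\rk G''=0$, \cite[Lemma 3.16]{todaPTrationality} applied to $0\to G''\to E\to Q'\to 0$ forces $\ch_1(Q')=0$. Together with $\ch_1(Q)=0$ this gives $\ch_1(T)=0$. That excludes any factor $F[1]$ with $F$ pure $2$-dimensional, because each such factor contributes $-\ch_1(F)$ and $H^2\cdot\ch_1(F)>0$.

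The one genuine difference is how you rule out an image of rank $-1$. The paper uses Toda's lemma to split into cases. When the kernel has rank $0$, it computes that the $\CB'$-part of the cokernel has $\ch_1=-K_X$ and concludes that $K_X$ would be effective. You instead kill $\mathcal H^{-1}(f)$ directly, by extending it to the reflexive hulls as a map $L\to L\otimes K_X$, i.e.\ a section of $K_X$. Your version uses $H^0(K_X)=0$ literally and avoids converting a cohomological identity into an actual section of $K_X$.

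Two small corrections:
\begin{itemize}
\item You do not need $L\cong\CO_X$. The vanishing $\ch_1=0$ in rational cohomology only makes $L$ torsion in general, but $\Hom(L,L\otimes K_X)=H^0(K_X)$ regardless.
\item You apply the positivity hypothesis to $\beta_G$, so you should note that $\beta_G\le\beta$. This holds because $\beta=\ch_2(\mathcal H^0(E))-\ch_2(\mathcal H^{-1}(E))$. Here $-\ch_2(\mathcal H^{-1}(E))=\ch_2(L/\mathcal H^{-1}(E))$ is effective, and $G$ is a sheaf quotient of $\mathcal H^0(E)$.
\end{itemize}
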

\begin{proof}
    We ought to show that for any $\mu_s$-semistable object $E$ with topological type $(-1,0,\beta, m)$ we have $\Ext^i(E,E)=0$ for $i>2$. By Serre duality this is equivalent to $\Ext^{3-i}(E, E\otimes K_X)=0$. Observe that tensoring by a line bundle preserves the heart $\CA$ and the subcategory $\CB$, and in particular both $E$ and $E\otimes K_X$ are elements of the heart $\CA$, so vanishing for $i>3$ is immediate and we only need to prove that $\Hom(E, E\otimes K_X)=0$. 

    Suppose we have a non-zero morphism $E\to E\otimes K_X$ and let $E_1, E_2$ be its kernel and cokernel:
        \begin{equation}\label{eq: kercokerproofquasismooth}
            0\to E_1\to E\to E\otimes K_X\to E_2\to 0
        \end{equation}
    Since $\CB$ is closed under subobjects the kernel $E_1$ is in $\CB$. Moreover, the quotient $E/E_1$ embeds into $E\otimes K_X$, which is in $\CB$, so $E_1\hookrightarrow E$ is a strict monomorphism in $\CB$. The cokernel $E_2$ can be decomposed according to the torsion pair $\langle \CB', \CB\rangle$ as 
    \begin{equation}\label{eq: BB'decomposition}
            0\to E_2''\to E_2\to E_2'\to 0
    \end{equation}
    with $E_2'\in \CB$ and $E_2''\in \CB'$; recall that $\rk(E_2'')=0$ for any $E_2''\in \CB'$. Then $E\otimes K_X\to E_2'$ is a strict epimorphism in $\CB$, and hence the same is true for $E\to E_2'\otimes K_{X}^{-1}$. Therefore, by \cite[Lemma 3.16]{todaPTrationality} we have one of the two possibilities:
\begin{enumerate}
    \item $\rk(E_1)=\rk(E_2)=\rk(E_2')=0$, in which case $E_1$ and $E_2'$ are pure 1-dimensional sheaves.
    \item $\rk(E_1)=\rk(E_2)=\rk(E_2')=-1$, in which case both $E_1$ and $E_2'\otimes K_X^{-1}$ have topological types of the form $(-1,0, \beta', m')$ for some $\beta'\leq \beta$. 
\end{enumerate}
Note that in either case the epimorphism $E\to E_2'\otimes K_{X}^{-1}$ in $\CB$ implies that $\ch_1(E_2'\otimes K_X^{-1})=0$. We start with the first case. We have
\[\ch_1(E_2'')\overset{\eqref{eq: BB'decomposition}}{=}\ch_1(E_2)\overset{\eqref{eq: kercokerproofquasismooth}}{=}\ch_1(E\otimes K_X)=-K_X\,.\]
Since $E_2''$ can be obtained by successive extensions of 0-dimensional sheaves and shifts $F[1]$ of pure 2-dimensional sheaves $F$, it would follow that $K_X$ was effective, contradicting the assumption that $h^{3,0}(X)=0$.  

For the second case we start with the observation that
\[\ch_1(E_2')=-K_X=\ch_1(E\otimes K_X)\overset{\eqref{eq: kercokerproofquasismooth}}{=}\ch_1(E_2)\]
and therefore $\ch_1(E_2'')=0$, which implies that $E_2''$ is a 0-dimensional sheaf. Let
\[Q_1=E/E_1\,,\quad Q_2=\ker(E\to E_2\otimes K_X^{-1})
,\quad Q'_2=\ker(E\to E'_2\otimes K_X^{-1})\,.\]
All of these have rank 0, and $Q_1, Q_2'$ are in $\CB$, as argued before, so $Q_1, Q_2'$ are both pure 1-dimensional sheaves. Moreover, we have a short exact sequence
\begin{equation}\label{eq: ses0dimsheaf}
0\to Q_2\to Q_2'\to E_2''\to 0\end{equation}
in $\CA$ and therefore it follows that $Q_2$ is also a pure 1-dimensional sheaf. By \eqref{eq: kercokerproofquasismooth} the sheaves $Q_1$ and $Q_2\otimes K_X$ have the same Chern characters. Now stability of $E$ implies that 
\[s\overset{(1)}{\geq} \mu_H(Q_2') \overset{(2)} \geq \mu_H(Q_2)=\mu_H(Q_1\otimes K_X^{-1})\overset{(3)} >\mu_H(Q_1)\overset{(4)}\geq s\,,\]
which is a constradiction. Here, (1), (4) follow from stability of $E$, (2) follows from \eqref{eq: ses0dimsheaf} and the fact that $E_2''$ is 0-dimensional, and (3) uses the positivity assumption for
\[\ch_3(Q_1\otimes K_X^{-1})=\ch_3(Q_1)-K_X\cdot \ch_2(Q_1)>\ch_3(Q_1)\,.\qedhere\]
\end{proof}

\begin{remark}
    The assumption that $h^{3,0}(X)=0$ is necessary since we are working with the standard obstruction theory governed by $\Ext^i$ instead of the traceless obstruction theory. For example, if $E$ is a stable pair then it is shown in \cite[Lemma 2.10]{PT} that $\tr\colon \Ext^3(E, E)\to H^3(\CO_X)$ is an isomorphism. It is conceivable that the traceless obstruction theory of the fixed determinant version of $\FL_{\beta, m}^s$ is quasi-smooth, although our current proof does not show that.
\end{remark}

\section{Proof of Theorem \ref{thm: main}}
\label{sec: proofmainthm}
We will now give a proof of our main theorem. The geometric input that we will need is Theorem \ref{thm: wcintro}. The next proposition is an easy consequence of the symmetries of $\sM$ invariants. We remind the reader of the $\BZ/2$ grading on the descendent algebra introduced in Definition \ref{def: deltaZ2grading}

\begin{proposition}\label{prop: Mquasipolynomial}
    Let $D\in \BD^X$. Then the function
    \[\BZ\ni m\mapsto \langle \mathsf{M}_{\beta, m/2}, D\rangle \in \BQ\]
    is a quasi-polynomial in $m$ of period dividing $2(\beta\cdot H)$. If $D$ is even/odd then this quasi-polynomial is even/odd, respectively.
\end{proposition}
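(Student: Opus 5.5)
Note first that $\mathsf{M}_{\beta,m/2}$ is only nonzero when $m/2$ lies in the lattice of allowed values of $\ch_3$. Since $\ch_3=\chi-\frac12\beta\cdot c_1(X)$, these values lie in $\frac12\BZ$, so indexing by $m\in\BZ$ is natural. Also, $\mathsf{M}_{\beta,m/2}\in\VXhat$ can only be paired with $D\in\BD^X_{\inv,(0,0,\beta,m/2)}$. I would therefore read the statement with $D$ a fixed weight-zero descendent: a fixed element of $\BD^X$ whose image in $\BD^X_{(0,0,\beta,m/2)}$ lies in the weight-$0$ part for each $m$, or which is normalized, e.g. $\eta_\delta$-normalized with $\delta=H^2$ or $\delta=\pt$. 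Keeping track of this bookkeeping is part of the argument.

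The first step is the twisting symmetry of Proposition \ref{prop: symminvariants}: $(T_H)_\ast\sM_{\beta,m'}=\sM_{\beta,m'+H\cdot\beta}$. Writing $m=2(H\cdot\beta)n+a$ with $0\le a<2(H\cdot\beta)$, this gives
\[\langle \sM_{\beta,m/2},D\rangle=\langle (T_H)^n_\ast\sM_{\beta,a/2},D\rangle=\langle\sM_{\beta,a/2},(T_H^\ast)^nD\rangle .\]
By Definition \ref{def: twistinglinebundle}, $T_H^\ast=\exp(c_1(H)\cdot\bR_{-1}')$, where $\bR'$ is the nilpotent operator on generators sending $\ch_k(\gamma)\mapsto\ch_{k-1}(\gamma\cdot c_1(H))$, extended as a derivation. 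Strictly, $T_H^\ast(\ch_k(\gamma))=\sum_j\frac1{j!}\ch_{k-j}(\gamma H^j)$ is the exponential of that derivation, so $T_H^\ast$ is an algebra automorphism. Hence $(T_H^\ast)^n=\exp(n\,\bR')$ with $\bR'$ a derivation that strictly lowers $\gamma$-codegree. Since $\cup H$ is nilpotent on $H^\ast(X)$, $\bR'$ is locally nilpotent on any finitely generated piece. Therefore $(T_H^\ast)^nD$ is a polynomial in $n$ with coefficients in $\BD^X$. Pairing with the fixed class $\sM_{\beta,a/2}$ gives a polynomial in $n$ for each residue $a$, which is exactly quasi-polynomiality with period dividing $2(H\cdot\beta)$.

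The point requiring care is compatibility with the quotients $\BD^X_\alpha$ and weight zero. The map $T_H^\ast$ carries $\BD^X_{\inv,\alpha'}$ isomorphically onto $\BD^X_{\inv,\alpha}$, as noted after Definition \ref{def: twistinglinebundle}, and it commutes with $\bR_{-1}$ because both are built from shifts in $k$. So the identity above is legitimate for $D$ of weight zero in the appropriate sense. If instead we use a normalized lift, normalized by $\ch_{2}(H^2)$-type classes, then the normalization $\eta_\delta$ depends polynomially on the numerical data $m$. That only preserves polynomiality in $n$.

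For parity, use $\delta_\ast\sM_{\beta,m/2}=\sM_{\beta,-m/2}$ from Proposition \ref{prop: symminvariants}. Then
\[\langle\sM_{\beta,-m/2},D\rangle=\langle\sM_{\beta,m/2},\delta^\ast D\rangle=(-1)^{|D|}\langle\sM_{\beta,m/2},D\rangle .\]
So the quasi-polynomial is even or odd according to $|D|$. Again, $\delta^\ast$ preserves weight zero, since it anticommutes with $\bR_{-1}$, and it maps $\BD_\alpha$ to $\BD_{\alpha^\vee}$. Here $(0,0,\beta,m/2)^\vee=(0,0,\beta,-m/2)$, so the identity is consistent.

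I expect the main obstacle to be making precise in what sense a fixed $D\in\BD^X$ pairs with $\sM_{\beta,m/2}$ for every $m$, i.e. making the weight-zero and normalization bookkeeping uniform in $m$. The polynomiality itself is the routine exponential computation above.
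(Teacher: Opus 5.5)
Your proposal is correct and follows the paper's proof. Quasi-polynomiality comes from $(T_{nH})_\ast\sM_{\beta,m}=\sM_{\beta,m+n\beta\cdot H}$ together with $T_{nH}^\ast D$ being polynomial in $n$, and the parity statement comes from $\delta_\ast\sM_{\beta,m}=\sM_{\beta,-m}$ and $\delta^\ast D=(-1)^{|D|}D$.

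Your weight-zero bookkeeping is more careful than the paper's, which tacitly applies the proposition only to weight-zero $D$ (the $D^i_j$ in the proof of Theorem~\ref{thm: main}). The one slip is in your aside on normalized lifts. There, $\delta=H^2$ and $\delta=\pt$ both give $\int_X\alpha\cdot\delta=0$ for $\alpha=(0,0,\beta,m)$, so they cannot be used. The valid choice is $\delta=H$, normalizing $\ch_3(H)$, and it is uniform in $m$.
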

\begin{proof}
   For the parity statement we have the following:
    \[\langle \mathsf{M}_{\beta, -m/2}, D\rangle=\langle \delta_\ast \mathsf{M}_{\beta, m/2}, D\rangle=\langle \mathsf{M}_{\beta, m/2}, \delta^\ast D\rangle=(-1)^{|D|}\langle \mathsf{M}_{\beta, m/2}, D\rangle\,.\]
    For the quasi-polynomiality we want to argue that the following is a polynomial in $k$:
     \[\langle \mathsf{M}_{\beta, m/2+k(\beta\cdot H)}, D\rangle=\langle (T_{kH})_\ast \mathsf{M}_{\beta, m/2}, D\rangle=\langle \mathsf{M}_{\beta, m/2}, T_{kH}^\ast D\rangle\,.\]
     This is immediate from the definition of $T_{kH}$ since 
     \[T_{kH}^\ast \left(\prod_{i=1}^n \ch_{k_i}(\gamma_i)\right)= \prod_{i=1}^n \Big(\ch_{k_i-1}(\gamma_i)+k\ch_{k_i}(\gamma_i\cdot H)+\frac{k^2}{2}\ch_{k_i-2}(\gamma_i\cdot H^2)\Big)\,\]
     is itself a polynomial in $k$ with coefficients in $\BD^X$.\qedhere 
\end{proof}

Consider now a fixed $\beta$ as in the statement of Theorem \ref{thm: main} and $D\in \BD^X$. We will analyze the formula that we get for $\langle \PT_{\beta, m},D\rangle$ by applying the wall-crossing formula of Theorem \ref{thm: wcintro}. The first remark is that given a fixed $\beta$ there are only finitely many decompositions 
\[\beta=\beta_0+\ldots+\beta_k\,\]
with $\beta_0\geq 0$ and $\beta_1, \ldots, \beta_k>0$. We will fix such a partition and consider its contribution to $\langle \PT_{\beta, m},D\rangle$. Note that we have:
\begin{align}\label{eq: eulerpairings}
\chi\big((0,0,\beta, m), (-1,0,\beta', m')\big)&=m-d_\beta/2\\
\chi\big((-1,0,\beta', m'), (0,0,\beta, m)\big)&=-m-d_\beta/2 \nonumber \\
\chi_\sym\big((0,0,\beta, m), (-1,0,\beta', m')\big)&=-d_\beta\,. \nonumber
\end{align}

To ease the notation we will denote $d_j\coloneqq d_{\beta_j}$. By the definition of $[-,-]$ we have
\begin{align*}
\Big\langle\big[\mathsf{M}_{\beta_k, m_k},\big[\ldots,\big[\mathsf{M}_{\beta_1, m_1},\mathsf{L}_{\beta_0, m_0}\big]\ldots \big]\big], D\Big\rangle=\sum_i \langle \sL_{\beta_0, m_0}, D_0^i\rangle \prod_{j=1}^k (-1)^{m_j-d_j/2}\langle \sM_{\beta_j, m_j}, D_j^i\rangle
\end{align*}
where 
\begin{align*}\sum_i \bigotimes_{j=0}^k &D_{k}^i\otimes D_{k-1}^i\otimes \ldots\otimes D_1^i\otimes D_0^i\coloneqq \\
&(\id\otimes\ldots\otimes \id\otimes \Delta_{d_{k}})\circ \ldots\circ(\id\otimes\Delta_{d_{2}})\circ\Delta_{d_{1}} D\in \big(\BD^X_\inv\big)^{\otimes k}\otimes \BD^X\,.
\end{align*}
Crucially, these $D_j^i$ depend only on the partition of $\beta$, but not on $m_0, m_1,\ldots, m_k$, which allows us to analyze the dependence on $m$. It follows from Lemma \ref{lem: parityWC} that for every $i$ we have
\begin{equation}
    \label{eq: sumofparities}
    \sum_{j=0}^k |D_j^i|\equiv |D|+\sum_{j=1}^k (d_j+1) \mod 2\,.
\end{equation}

Let
\[f_0^i(m)=\langle \sL_{\beta_0, m/2}, D_0^i\rangle\quad \textup{ and }\quad f_j^i(m)=(-1)^{m/2-d_j/2}\langle \sM_{\beta_j, m/2}, D_j^i\rangle\,\textup{ for }j=1,\ldots, k\,.\]
Note that $f_j^i(m)=0$ unless $m\equiv d_j \mod 2$, so in particular the sign $(-1)^{m/2-d_j/2}$ makes sense. 

With the notation set above, the contribution of the fixed partition $\beta=\beta_0+\ldots+\beta_k$ to $Z_\beta(q|D)$ is 

\[\sum_i \left(\sum_{m_0\in \frac{1}{2}\BZ}f_0^i(2m_0)q^{m_0}\right)\left(\sum_{\substack{
    m_1, \ldots, m_k\in \frac{1}{2}\BZ\\
    0\leq\frac{m_1}{\beta_1\cdot H}\leq \ldots\leq \frac{m_k}{\beta_k\cdot H}}}\omord\Big(\frac{m_1}{\beta_1\cdot H},\ldots, \frac{m_k}{\beta_k\cdot H}\Big) \prod_{j=1}^k f_j^i(2m_j)q^{m_j}\right)\,.\]
Let $A(q), B(q)$ be the first and second factors in the above formula. 

By Theorem \ref{thm: propertiesLspaces}.3 the moduli stacks $\FL_{\beta, m}^{0}$ are empty for all but finitely many $m$, and thus $\sL_{\beta, m}=0$ for all but finitely many $m$ by Theorem \ref{thm: Linvariants}. Since $\delta_\ast \sL_{\beta, m}=\sL_{\beta, -m}$, the same argument as in the proof of Proposition \ref{prop: Mquasipolynomial} shows that $m\mapsto f_0^i(m)$ has the same parity as~$|D_0^i|$. Thus, $A(q)$ is a Laurent polynomial\footnote{More precisely, $q^{d_0/2}A(q)$ is a Laurent polynomial in $q$. A similar caveat applies to all the other ``rational functions'' that we consider below.} satisfying the symmetry
\[A(q^{-1})=(-1)^{|D_0^i|}A(q)\,.\]

We now consider $B$. Note that the quasi-polynomial (of period 2)
\[\BZ\ni m\mapsto \begin{cases}
    (-1)^{m/2-d/2} &\textup{ if }m\equiv d\mod 2\\
    0 & \textup{ otherwise}
\end{cases}\]
has the same parity as $d$ (i.e. is even if $d$ is even and odd if $d$ is odd). Hence, by Proposition \ref{prop: Mquasipolynomial} it follows that $f_j^i(m)$ is a quasi-polynomial of period dividing $2(\beta\cdot H)$ and parity $|D_j^i|+d_j$. Thus, by Corollary \ref{cor: combinatorialsymm} we find that $B(q)$ is the expansion of a rational function satisfying the symmetry
\[B(q^{-1})=(-1)^{\sum_{j=1}^k(|D_j^i|+d_j+1)} B(q)\,. \]

Combining the symmetries for $A$ and $B$ with \eqref{eq: sumofparities} we find that $A(q)B(q)$ is the expansion of a rational function with symmetry
\[A(q^{-1})B(q^{-1})=(-1)^{|D|}A(q)B(q)\,.\]
Since we have exhibited $Z_\beta(q|D)$ as a finite sum of such expansions, the proof of Theorem \ref{thm: main} is now complete.

\section{Primary insertions and Gopakumar--Vafa strong rationality}
\label{sec: primaryGV}

Insertions that only use the tautological classes $\ch_2(\gamma)$ are called primary. As stated in Section \ref{subsec: primaryintro}, our main wall-crossing formula simplifies drastically in the primary regime. For notational convenience we will assume throughout this section that $X$ is Fano, although the results hold in the generality of Theorem \ref{thm: main} (see Remark \ref{rmk: positiveclassesprimary}).

\subsection{Wall-crossing for primary descendents}

The next lemma explains how the Lie brackets appearing in the general wall-crossing formula of Theorem \ref{thm: wcintro} simplify when we restrict our attention to primary descendents
\begin{equation}
    \label{eq: primaryD}D=\prod_{i=1}^n \ch_2(\gamma_i)\,.
\end{equation}

\begin{lemma}\label{lem: bracketprimary}
Let $D$ be a primary descendent. Let $\sM\in \VXhat$ be a class in the image of the map $H_\ast(\FM_{\beta, m}^\rig)\to H_\ast(\FM_X^\rig)\to \widecheck \bV_X$, where $\FM_{\beta, m}\subseteq \FM_X$ is the stack of 1-dimensional sheaves on $X$ with Chern character~$(\beta, m)$. Let $\mathsf{P}\in \bV_{(-1, 0, \beta', m')}$. We have
\begin{align}
    \langle \big[\sM, \mathsf{P} \big], D\rangle&=0\quad \nonumber \textup{ if }d_\beta>1\\
     \langle \big[\sM, \mathsf{P} \big], \label{eq: bracketprimarydescendents}D\rangle&=(-1)^{m-1/2}\sum_{I\sqcup J=[n]}\langle \sM, \prod_{i\in I}\ch_2(\gamma_i)\rangle \langle \mathsf{P}, \prod_{j\in J}\ch_2(\gamma_j)\rangle \quad \textup{ if }d_\beta=1\,,
\end{align}
where the last sum runs over partitions of $[n]= \{1, \ldots, n\}$.
\end{lemma}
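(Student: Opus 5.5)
We compute $\langle[\sM,\mathsf P],D\rangle=(-1)^{\chi(\alpha,\alpha')}\langle \sM\otimes\mathsf P,\Delta_{s}D\rangle$ directly from the definition of the bracket, with $\alpha=(0,0,\beta,m)$, $\alpha'=(-1,0,\beta',m')$. By \eqref{eq: eulerpairings}, $s=\chi_\sym(\alpha,\alpha')=-d_\beta$ and the sign is $(-1)^{m-d_\beta/2}$, which is $(-1)^{m-1/2}$ when $d_\beta=1$. Then
\[\Delta_{-d_\beta}=\sum_{j\geq0}c_{j+1-d_\beta}(\Theta)\circ\Big(\tfrac{\bR_{-1}^j}{j!}\otimes\id\Big)\circ\Sigma^\ast .\]
Since $\Sigma^\ast D=\sum_{I\sqcup J=[n]}\prod_{i\in I}\ch_2(\gamma_i)\otimes\prod_{j\in J}\ch_2(\gamma_j)$, it suffices to control the factors $\bR_{-1}^j$ and $c_{j+1-d_\beta}(\Theta)$ after pairing with $\sM\otimes\mathsf P$.

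The first step is to bound cohomological degrees. Write $e=\int_X\beta\cdot\gamma$ for $\gamma\in H^2$. On $\BD^X_\alpha$ for a 1-dimensional class, we have $\ch_k(\gamma)=0$ when $2k+\deg\gamma<6$, and $\ch_1(\gamma)$ for $\gamma\in H^4$ equals the constant $\int_X\beta\cdot\gamma$. Hence $\bR_{-1}\ch_2(\gamma)=\ch_1(\gamma)$ is $0$ unless $\gamma\in H^4$, in which case it is a constant. Also $\bR_{-1}^2\ch_2=\ch_0$, which vanishes on the $\alpha$-side for degree reasons. So $\bR_{-1}^j$ applied to the left factor produces only constants times products of fewer $\ch_2$'s.

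The key step is to use that $\sM$ lives in the image of $H_\ast(\FM^\rig_{\beta,m})$, and hence pairs only with weight-0 classes. Here I would use the partial-lift remark: $\Delta_{\chi_\sym}$ lands in $\BD_{\inv,\alpha}\otimes\BD_{\alpha'}$, so the whole left output is weight 0. The main obstacle is showing that the only surviving terms are $j=0$ with $c_{1-d_\beta}(\Theta)$, which is $1$ when $d_\beta=1$ and $0$ when $d_\beta>1$ (Chern classes of negative index vanish). For $d_\beta>1$ one must therefore also rule out the terms with $j\ge d_\beta-1\ge1$, namely $c_{j+1-d_\beta}(\Theta)\cdot(\bR_{-1}^j\otimes\id)$.

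I would handle this by a degree count against the virtual dimension. Since $\sM$ comes from $H_\ast(\FM^\rig_{\beta,m})$ of the relevant class, its pairing is supported in the degree of $[M_{\beta,m}]^\vir$, which is $2d_\beta$. This can be justified via Joyce's construction, which gives classes of homological degree $2(1-\chi(\alpha,\alpha))=2d_\beta$. Alternatively, and more safely, I would argue formally: $\bR_{-1}^j$ lowers the degree by $2j$, while $c_{j+1-d_\beta}(\Theta)$ has $\ch_a\otimes\ch_b$ components whose left factors contain $\ch_a$ with $a\le2$ only contributing nontrivially. One then checks that $c_k(\Theta)$ with $k\ge1$, paired against the weight-0 constraint, is killed. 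Concretely, the exponential in \eqref{eq: chernclassesTheta} restricted to left factors of type $\ch_{\le2}$ gives left classes that are constants or $\ch_2$, and the binomial identity implicit in \cite[Lemma A.2]{klmp}, $(\bR_{-1}\otimes\id)c_j(\Theta)=(\chi_\sym-j+1)c_{j-1}(\Theta)$, lets the $j\ge1$ terms telescope to zero on primary inputs.

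For $d_\beta=1$ only $j=0$ survives, which gives \eqref{eq: bracketprimarydescendents}. For $d_\beta>1$, either $c_{1-d_\beta}(\Theta)=0$ kills the $j=0$ term or the telescoping kills everything, so the bracket vanishes.
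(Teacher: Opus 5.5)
There is a genuine gap: you never show that the terms with $j\geq 1$ in $\Delta_{-d_\beta}D$ pair to zero against $\sM\otimes\mathsf P$, and the ingredient that does this is missing. That ingredient is geometric. On $\FM_{\beta,m}\times X$ the universal sheaf $\CF$ is supported in codimension $2$, so $\ch_0(\CF)=\ch_1(\CF)=0$, and the realizations of $\ch_0(\gamma)$ and $\ch_1(\gamma)$ in $H^\ast(\FM_{\beta,m})$ vanish for \emph{every} $\gamma$. Since $\bR_{-1}$ is a derivation lowering the index, each monomial of $\bR_{-1}^j\prod_{i\in I}\ch_2(\gamma_i)$ with $j\geq 1$ contains a factor $\ch_1$ or $\ch_0$, hence realizes to zero. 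After lifting $\sM$ to $H_\ast(\FM_{\beta,m})$, every $j\geq 1$ term therefore pairs to zero whatever $c_{j+1-d_\beta}(\Theta)$ is. Only $c_{1-d_\beta}(\Theta)\in\{0,1\}$ remains.

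This is exactly where the hypothesis that $\sM$ comes from $\FM^{\rig}_{\beta,m}$ enters, and it cannot be replaced by a computation inside $\BD^X_\alpha$. Your formal computation there is also off. For $\gamma\in H^4$, the relation gives $\ch_1(\gamma)=\int_X\ch_1(\alpha)\cdot\gamma=0$, not a nonzero constant; the class $\beta\cdot\gamma$ would lie in $H^8(X)=0$. On the other hand, $\ch_1(\pt)$ has $2k+\deg\gamma=8>6$, so it is a nonzero degree-$2$ element of $\BD^X_\alpha$. Thus $\bR_{-1}\ch_2(\pt)$ survives formally and only dies after realization.

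Your two fallbacks do not close the gap. The degree count is unavailable because the lemma concerns an arbitrary class in the image of $H_\ast(\FM^{\rig}_{\beta,m})$, not a homogeneous virtual class. Moreover, for $1$-dimensional classes $\chi(\alpha,\alpha)=0$, so $[M_{\beta,m}]^{\vir}$ has virtual dimension $1$, not $d_\beta$. The identity $(\bR_{-1}\otimes\id)c_j(\Theta)=(\chi_{\sym}-j+1)c_{j-1}(\Theta)$ only says that the total left output of $\Delta_{\chi_{\sym}}$ is weight $0$. It does not make individual $j\geq 1$ terms vanish, and you do not actually exhibit any telescoping.

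Your sign $(-1)^{m-d_\beta/2}$, the value $s=-d_\beta$, the expansion of $\Sigma^\ast D$, and the treatment of the $j=0$ term are all correct.
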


\begin{remark}\label{rmk: cohdegreeconstraints}
    Note that the descendent $\ch_2(\gamma)$ has cohomological degree $\deg(\gamma)-2$. In particular, when studying primary insertions we may restrict ourselves to classes $\gamma_i\in H^{>2}(X)$. If we do so, the type of partitions that can appear in \eqref{eq: bracketprimarydescendents} is very restricted: since the virtual dimension of $\sM$ is 1, the only possibility for $I$ is $I=\{i\}$ with $\deg(\gamma_i)=4$ or $I=\{i_1, i_2\}$ with $\deg(\gamma_{i_1})=\deg(\gamma_{i_2})=3$.
\end{remark}

\begin{proof}
We recall that, by definition of the Lie bracket and \eqref{eq: eulerpairings} we have
\[\langle \big[\sM, \mathsf{P} \big], D\rangle=(-1)^{m-d_\beta/2}\langle \sM\otimes \mathsf P, \Delta_{-d_\beta} D\rangle\]
where 
\[\Delta_{-d_\beta}D=\sum_{j\geq 0}c_{-d_\beta+j+1}(\Theta)\circ \left(\frac{\bR_{-1}^j}{j!}\otimes \id\right)\circ \Sigma^\ast D\,.\]
Note first that
\[\Sigma^\ast D=\sum_{I\sqcup J=[n]} D_I\otimes D_J\]
where $D_I=\prod_{i\in I}\ch_2(\gamma_i)$. The key observation is the following: since the universal sheaf in $\FM_{\beta, m}\times X$ is supported in codimension 2, the realization of $\ch_0(\gamma), \ch_1(\gamma)$ in $H^\ast(\FM_{\beta, m})$ vanishes for any $\gamma$, and therefore the realization of $\bR_{-1}^j D_I$ vanishes for $j>0$. Since by hypothesis $\sM$ comes from $H_\ast(\FM_{\beta, m})$, we have $\langle \sM, D'\cdot  \bR_{-1}^j(D_I)\rangle=0$ for any $j>0$ and $D'\in \BD^X$. Thus
\[\langle \sM\otimes \mathsf P, \Delta_{-d_\beta}D\rangle=\langle \sM\otimes \mathsf P,\sum_{I\sqcup J=[n]}c_{1-d_\beta}(\Theta) D_I\otimes D_J\rangle\,.\qedhere\]
\end{proof}

\begin{lemma}\label{lem: easychiindependence}
Let $\beta\in H_2(X)$ be such that $d_\beta=1$ and let $D$ be a primary descendent. Then 
\[\int_{[M_{\beta, m}^H]^\vir}D\]
does not depend on $H$ and on $m$. 
\end{lemma}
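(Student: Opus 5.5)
The plan rests on one observation: $d_\beta=1$ forces $\beta$ to be irreducible. Suppose $\beta=\beta_1+\beta_2$ with $\beta_1,\beta_2>0$ effective. The positivity hypothesis gives $d_{\beta_i}=-\beta_i\cdot K_X\geq 1$, so $d_\beta=d_{\beta_1}+d_{\beta_2}\geq 2$, a contradiction. Now let $F$ be a 1-dimensional sheaf with $\ch_2(F)=\beta$. Every nonzero subsheaf $F'\subseteq F$ that is pure of dimension 1 has $\ch_2(F')=\beta$. Hence $F/F'$ is 0-dimensional, so $\ch_3(F')\leq \ch_3(F)$ and $\mu_H(F')\leq\mu_H(F)$. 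Consequently, for every $H$, $\mu_H$-semistability in class $\beta$ is equivalent to purity, and there are no strictly semistables. So $M_{\beta,m}^H=M_{\beta,m}$ is independent of $H$ as a quasi-smooth proper moduli space, together with its derived structure and virtual class. In particular, $H$-independence is immediate.

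For the $m$-independence, I would not use the twist $T_H$: it shifts $m$ by $H\cdot\beta$, which need not be $1$, and it changes the primary descendents. Instead I would construct a virtual-class-preserving correspondence between $M_{\beta,m}$ and $M_{\beta,m+1}$, compatible with primary insertions. The cleanest route is to argue purely in $\VXhat$ through the existing wall-crossing, as follows.

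First, compute the primary invariants directly from the realization map. Primary descendents $\ch_2(\gamma)$ are realized as $p_\ast(\ch_2(\CF)q^\ast\gamma)$. On $M_{\beta,m}\times X$ the class $\ch_2(\CF)$ is the fundamental class of the support of $\CF$, i.e. of the universal cycle, since the support is codimension 2 and $\CF$ has rank 1 generically on its support (because $\beta$ is irreducible). So $\ch_2(\gamma)$ is the pullback of the corresponding incidence class under the support map $M_{\beta,m}\to \mathrm{Chow}_\beta(X)$. Second, identify the virtual class's pushforward to the Chow variety. I would compare $M_{\beta,m}$ with $M_{\beta,m+1}$ via the space of pairs $(F\subset F')$ with $F'/F\simeq\CO_x$ and $x$ a point on the support. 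This is a correspondence that is a $\BP$-bundle-type space over each side (a relative Quot scheme of length-one quotients), and both projections commute with the support maps.

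Third, show that the virtual classes of $M_{\beta,m}$ and $M_{\beta,m+1}$ push forward to the same class on the Chow variety. I would do this by computing the virtual class of the correspondence in two ways, as a virtual projective bundle over each side. The fibres are $\BP(\Ext^1(\CO_x,F))$, respectively $\BP(\Hom(F',\CO_x))$, and the incidence point varies over the support curve. Integrating the appropriate hyperplane power over the fibre should give a degree-one pushforward on each side. Since primary insertions are pulled back from the Chow variety and the virtual dimension is $1$, this yields the $m$-independence.

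The main obstacle is the third step: controlling the virtual classes of the elementary modification correspondence with obstruction theories, including where the fibre dimensions jump at singular points of the support. If this becomes unmanageable, the fallback is to deform to a general situation where the supports are smooth rational curves. There one can argue on the locus of smooth supports with $F$ a line bundle on the curve, where $M_{\beta,m}\simeq M_{\beta,m+1}$ compatibly with supports via twisting by a degree-one divisor class. The cost of the fallback is that one must justify that the virtual class is supported on, or determined by, that locus.
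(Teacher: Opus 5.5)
Your irreducibility argument and the resulting $H$-independence (semistable $=$ stable $=$ pure in class $\beta$, for every $H$) are correct and agree with the paper. The gap is in the $m$-independence. The elementary-modification correspondence is only sketched, and its essential step is exactly what you leave open: comparing the virtual classes of $M_{\beta,m}$ and $M_{\beta,m+1}$ through the relative Quot scheme, including where fibre dimensions jump at singular points of the support. Your fallback is not a proof either. It would require deforming so that supports are smooth rational curves, and then showing that the virtual class is determined by the locus of smooth supports. You do not justify that, and one cannot expect it in general. As written, the proposal does not establish $m$-independence.

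The missing idea is that the twist you dismissed works once you twist by the right line bundle. Tensoring with $K_X$ preserves purity. It also preserves the derived structure, since $\Ext^\ast(F\otimes K_X,F\otimes K_X)=\Ext^\ast(F,F)$. It shifts $\ch_3$ by $K_X\cdot\beta=-d_\beta=-1$. So it gives an isomorphism $M_{\beta,m}\xrightarrow{\sim}M_{\beta,m-1}$ identifying the virtual classes.

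Both of your objections to twisting fail. First, the shift in $m$ is $L\cdot\beta$ for whatever line bundle $L$ you twist by; choosing $L=K_X$ (equivalently, the polarization $H=-K_X$) gives a shift of exactly $\mp1$. Second, you have
\[T_L^\ast\ch_2(\gamma)=\ch_2(\gamma)+\ch_1(\gamma\cdot c_1(L))+\tfrac12\ch_0(\gamma\cdot c_1(L)^2),\]
and the last two terms realize to zero on any moduli space of $1$-dimensional sheaves. This is because the universal sheaf is supported in codimension $2$, so $\ch_0(\CF)=\ch_1(\CF)=0$. Hence
\[\int_{[M_{\beta,m-1}]^\vir}D=\int_{[M_{\beta,m}]^\vir}T_{K_X}^\ast D=\int_{[M_{\beta,m}]^\vir}D,\]
which is the paper's short proof. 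Your fallback's ``twisting by a degree-one divisor class'' is precisely this. Since $K_X^{-1}$ has degree $d_\beta=1$ on every curve in class $\beta$, the twist is defined globally on all of $M_{\beta,m}$, and neither the correspondence nor a restriction to smooth supports is needed.
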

\begin{proof}
    Since $-K_X$ is ample and $-K_X\cdot \beta=1$ it follows that $\beta$ is an irreducible curve class, i.e. it cannot be written as a sum of two effective classes. Therefore, a 1-dimensional sheaf $F$ with $\ch_2(F)=\beta$ is $\mu_H$-stable if and only if it is $\mu_H$-semistable if and only if it is pure. In particular, the moduli space $M_{\beta, m}^H$ does not depend on $H$. Moreover, $F\mapsto F\otimes K_X$ determines an isomorphism $M_{\beta, m}\simeq M_{\beta, m-1}$. Hence
    \[\int_{[M_{\beta, m-1}^H]^\vir} D=\int_{[M_{\beta, m}^H]^\vir} T_{K_X}^\ast (D)=\int_{[M_{\beta, m}^H]^\vir} D\,\]
    where the last equality uses again the fact that the realization of $\ch_0(\gamma), \ch_1(\gamma)$ in $M_{\beta, m}^H$ vanishes.
\end{proof}

\begin{question}
Are the primary $\sM$ invariants $\int_{\sM_{\beta, m}}D$ independent of $m$ for arbitrary $\beta$? The wall-crossing formula shows that they are polarization independent, and analogous phenomena have been observed for different moduli spaces in \cite[Proposition 1.8]{LMP2}, \cite[Theorem 8.2]{parabolic} and work in preparation by W. Lim, W. Pi and the second author on moduli of 1-dimensional sheaves on surfaces. 
\end{question}

In light of Lemma \ref{lem: easychiindependence}, when $d_\beta=1$ we will write $\int_{\sM_{\beta}}D=\int_{[M_{\beta, m}^H]^\vir}D$
for primary descendents $D$. To formulate the primary $\PT/\sL$ wall-crossing formula, it is convenient to encode primary invariants into generating series.  We use a formal Novikov variable $Q$ to keep track of the curve class $\beta$. Furthermore, we introduce variables to keep track of the primary insertions. For this, we fix a homogeneous basis $\{\gamma_a\}_{a\in A}$ of $H^{\ast}(X)$. We denote by $A_k, A_{>k}\subseteq A$ the subsets corresponding to elements $\gamma_a\in H^k(X), H^{>k}(X)$, respectively. Consider the formal variables $\bt=\{t_a\}_{a\in A_{>2}}$, regarded as dual to the basis $\{\gamma_a\}_{a\in A_{>2}}$ of $H^{>2}(X)$. We now introduce the following partition functions:

\begin{align}\label{eq: partitionfunctions}Z_{\textup{prim}}^{\PT}(Q, q, {\bf t})&=\sum_{\beta\geq 0}\sum_{m\in \frac{1}{2}\BZ}Q^\beta q^m \sum_{a_1, \ldots, a_n\in A_{>2}}\frac{t_{a_1}\ldots t_{a_n}}{n!}\int_{\PT_{\beta, m}}\prod_{i=1}^n\ch_2(\gamma_{a_i})\\ \nonumber
Z_{\textup{prim}}^{\sL}(Q, q, {\bf t})&=\sum_{\beta\geq 0}\sum_{m\in \frac{1}{2}\BZ}Q^\beta q^m \sum_{a_1, \ldots, a_n\in A_{>2}}\frac{t_{a_1}\ldots t_{a_n}}{n!}\int_{\sL_{\beta, m}}\prod_{i=1}^n\ch_2(\gamma_{a_i})\\ \nonumber
Z_{\textup{prim}}^{\mathsf{M}}(Q, {\bf t})&=\sum_{\beta\textup{ s.t. }d_\beta=1}Q^\beta \sum_{a_1, \ldots, a_n\in A_{>2}}\frac{t_{a_1}\ldots t_{a_n}}{n!}\int_{\sM_{\beta}} \prod_{i=1}^n\ch_2(\gamma_{a_i})\,.
\end{align}
Note that the $Q^\beta t_{a_1}\ldots t_{a_n}$ coefficient of $Z_{\textup{prim}}^{\PT}(Q, q, {\bf t})$ is the same as 
\[Z^{\PT}_\beta\big(q|\ch_2(\gamma_{a_1})\ldots \ch_2(\gamma_{a_n})\big)\,,\]
which is the expansion of a rational function with $q\leftrightarrow q^{-1}$ symmetry, as we have shown. 
The fact that we only consider $a_i\in A_{>2}$ is not losing any information, see Remark \ref{rmk: cohdegreeconstraints}. We emphasize that $Z_{\textup{prim}}^{\mathsf{M}}$ only contains information concerning curve classes with $d_{\beta}=1$. In particular, if $X$ is such that $-K_X\cdot \beta>1$ for every effective curve class $\beta$ (e.g. $X=\BP^3, X=\BP^1\times \BP^1\times \BP^1$ or $X$ is a cubic 3-fold),  then $Z_{\textup{prim}}^{\mathsf{M}}=0$. By Remark \ref{rmk: cohdegreeconstraints} the sum over $a_i$ in the definition of $Z_{\textup{prim}}^{\mathsf{M}}$ is quite simple: either $n=1$ and $a_1\in A_4$ or $n=2$ and $a_1, a_2\in A_3$.

\begin{theorem}\label{thm: primarywc}Let $X$ be a Fano 3-fold. We have the following equality of formal generating series:
    \begin{equation}
        \label{eq: todaexponential}
    Z_{\textup{prim}}^{\mathsf{PT}}(Q, q, \bt)=\exp\left(\frac{1}{q^{1/2}+q^{-1/2}}Z_{\textup{prim}}^{\mathsf{M}}(Q, \bt)\right)Z_{\textup{prim}}^{\mathsf{L}}(Q, q, \bt)\,.\end{equation}
\end{theorem}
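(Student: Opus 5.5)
We pair both sides of the wall-crossing formula of Theorem \ref{thm: wcintro} with a primary descendent $D=\prod_{i=1}^n\ch_2(\gamma_i)$. We then simplify the nested brackets using Lemma \ref{lem: bracketprimary}.

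The first step is iteration. Each $\sM_{\beta_j,m_j}$ comes from $H_\ast(\FM^\rig_{\beta_j,m_j})$, and each partial bracket $[\sM_{\beta_{j-1},m_{j-1}},[\ldots,\sL_{\beta_0,m_0}]]$ lies in some $\bV_{(-1,0,\beta',m')}$. So Lemma \ref{lem: bracketprimary} applies at every stage. Consequently any term with some $d_{\beta_j}>1$ pairs to zero with $D$. The primary descendents are closed under the coproduct $\Sigma^\ast$, so the subsets $D_J$ appearing at each step are again primary. The surviving terms have all $d_{\beta_j}=1$, and for them
\[\Big\langle[\sM_{\beta_k,m_k},[\ldots,[\sM_{\beta_1,m_1},\sL_{\beta_0,m_0}]]],D\Big\rangle=\prod_{j=1}^k(-1)^{m_j-1/2}\sum_{I_0\sqcup\ldots\sqcup I_k=[n]}\langle\sL_{\beta_0,m_0},D_{I_0}\rangle\prod_{j=1}^k\int_{\sM_{\beta_j}}D_{I_j}.\]
Here Lemma \ref{lem: easychiindependence} lets us drop the $m_j$-dependence of the $\sM$ integrals. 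The ordered set-partition sums are exactly what the exponential generating function in $\bt$ turns into products. Thus, after passing to generating series in $Q$ and $\bt$, each term becomes a coefficient of a product of $Z^{\sM}_{\textup{prim}}$ factors times $Z^{\sL}_{\textup{prim}}$.

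The second step is the $m$-sum. Each $\beta_j$ with $d_{\beta_j}=1$ has $m_j\in\frac12+\BZ$. Writing $n_j=m_j-\tfrac12\geq 0$, the inequality $0\le m_1/(\beta_1\cdot H)\le\ldots$ in Theorem \ref{thm: wcintro} is automatically strict at $0$, since $m_j\neq 0$. The weights $\omord$ record ties among the ratios $m_j/(\beta_j\cdot H)$. For a fixed partition of $\beta$, consider the full contribution: sum over all orderings of the multiset $\{\beta_1,\ldots,\beta_k\}$ and over all $m_j$ with the ordering constraint. The factors $\int_{\sM_{\beta_j}}D_{I_j}$ and the signs do not depend on the order. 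The key combinatorial claim is then the following. For fixed unordered data $\{(\beta_j,m_j)\}$, summing $\omord$ over all orderings compatible with $m_1/(\beta_1\cdot H)\le\ldots\le m_k/(\beta_k\cdot H)$ gives $1/k!$ times the number of labelled assignments. Indeed, a block of $r$ tied values admits $r!$ orderings, each with weight $1/r!$, matching the $1/r!$ symmetrization in $\omord$. With this claim the ordered sum becomes $\frac1{k!}\prod_{j=1}^k\sum_{m_j}(\ldots)$ over unconstrained $m_j>0$. Each factor then contributes
\[\sum_{n\ge0}(-1)^n q^{n+1/2}=\frac{q^{1/2}}{1+q}=\frac{1}{q^{1/2}+q^{-1/2}}\]
times $Q^{\beta_j}$ times the $\sM$ integral. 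Summing over $k$ with the $1/k!$ yields $\exp\big(\frac{1}{q^{1/2}+q^{-1/2}}Z^{\sM}_{\textup{prim}}\big)\,Z^{\sL}_{\textup{prim}}$, which is the claimed identity.

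I expect the main obstacle to be the bookkeeping in the second step. One must check carefully that summing $\omord$ over orderings exactly cancels the tie multiplicities; equivalently, $\omord$ should be the symmetrization producing the exponential, in the spirit of Toda's combinatorial identity \cite[(81)]{todaPTrationality}. One must also verify that the signs $(-1)^{m_j-1/2}$ and the sign conventions $(-1)^{\chi}$ in the bracket are consistent with the iterated application of Lemma \ref{lem: bracketprimary}. A more minor point is confirming that this lemma applies at each intermediate stage, where $\mathsf P$ is a bracket rather than a moduli class. This holds because the lemma only requires $\mathsf P\in\bV_{(-1,0,\beta',m')}$.
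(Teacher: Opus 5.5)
Your proposal is correct and follows the paper's proof essentially step by step: you iterate Lemma \ref{lem: bracketprimary} to kill every term with some $d_{\beta_j}>1$, use Lemma \ref{lem: easychiindependence} to remove the $m_j$-dependence of the $\sM$ integrals, use that $1/\omord$ counts the permutations sorting a tuple (so the $\omord$-weighted ordered sum is $\frac{1}{k!}$ times the unconstrained sum), and evaluate $\sum_{m\in\frac12+\BZ_{\geq 0}}(-1)^{m-1/2}q^m=\frac{1}{q^{1/2}+q^{-1/2}}$. The only difference is that the paper takes $H=-K_X$, so $\beta_j\cdot H=1$ for every surviving $\beta_j$ and the symmetrization is only in the $m_j$ for each fixed ordered $(\beta_j,I_j)$; your general-$H$ version instead has to symmetrize jointly over the triples $(\beta_j,I_j,m_j)$, which is valid because the summand is a symmetric product over $j$.
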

\begin{remark}\label{rmk: positiveclassesprimary}
    More generally, if $X, \beta$ are as in the statement of Theorem \ref{thm: main} then the $Q^\beta$ coefficient of both sides matches.
\end{remark}
\begin{proof}
For notational convenience we will work with the polarization $H=-K_X$. Given a subset $I\subseteq [n]$ let us denote $D_I=\prod_{i\in I} \ch_2(\gamma_i)$. By the main wall-crossing formula and Lemma \ref{lem: bracketprimary} we have
\begin{align*}\nonumber\int_{\PT_{\beta, m}}D=\sum_{\substack{\beta_0+\ldots+\beta_k=\beta\\
d_{\beta_i}=1\textup{ for }i=1,\ldots, k\\
    m_0+\ldots+m_k=m\\
    0< m_1\leq m_2\leq \ldots\leq m_k}}
    \omord\Big(m_1,\ldots, m_k\Big)
    \sum_{I_0\sqcup\ldots\sqcup I_k=[n]}&\left(\int_{\sL_{\beta_0, m_0}}D_{I_0}\right)\\[-1cm]
    &\prod_{j=1}^k\left( (-1)^{m_j-1/2}\left(\int_{\sM_{\beta_j, m_j}}D_{I_j}\right)\right)\label{eq: wcprimaryI}
    \end{align*}
Note that, since $d_{\beta_i}=1$ is odd, the sum runs over half integers $m_1, \ldots, m_k\in \tfrac{1}{2}+\BZ$, and in particular they cannot be 0. The second sum is over the set of all possible partitions of $[n]$ into $k+1$ parts.

By Lemma \ref{lem: easychiindependence} the integral $\int_{\sM_{\beta_j, m_j}}D_{I_j}$ does not depend on $m_j$, and we simply write it as $\int_{\sM_{\beta_j}}D_{I_j}$. 
Observe that, given a $k$-tuple $(m_1, \ldots, m_k)$ of positive numbers, not necessarily ordered, the number of permutations that make it ordered is equal to $1/\omord(m_1, \ldots, m_k)$. Hence
\begin{align*}\sum_{\substack{m_j\in \frac{1}{2}+\BZ_{\geq 0}\\
m_1\leq m_2\leq \ldots\leq m_k}}\omord(m_1, \ldots, m_k)\prod_{j=1}^k (-1)^{m_j-1/2}q^{m_j}&=\sum_{m_j\in \frac{1}{2}+\BZ_{\geq 0}}\frac{1}{k!}\prod_{j=1}^k (-1)^{m_j-1/2}q^{m_j}\\
&=\frac{1}{k!}\left(\frac{1}{q^{1/2}+q^{-1/2}}\right)^k\,.
\end{align*}
Therefore we can rewrite the wall-crossing above as
\[Z_\beta^\PT(q, D)=\sum_{\substack{\beta_0+\ldots+\beta_k=\beta\\
d_{\beta_i}=1\textup{ for }i=1,\ldots, k}}\sum_{I_0\sqcup\ldots\sqcup I_k=[n]}Z_{\beta_0}^{\sL}(q| D_{I_0})\frac{1}{k!(q^{1/2}+q^{-1/2})^k}\prod_{j=1}^k \left(\int_{\sM_{\beta_j}}D_{I_j}\right)\,.\]
The right hand side is precisely the $Q^{\beta}t_{a_1}\ldots t_{a_n}$ coefficient of the right hand side of \eqref{eq: todaexponential}.\qedhere
\end{proof}

\subsection{Stable pairs/Gopakumar--Vafa and strong rationality}\label{subsec: primaryrationality}

Pandharipande conjectured in \cite{3questions} that, similarly to the Calabi--Yau case, the primary section of the Gromov--Witten ($\GW$) theory of 3-folds is governed by certain integers, often called Gopakumar--Vafa ($\GV$) invariants or BPS invariants. Together with the $\GW/\PT$ correspondence this predicts a very strong form of rationality, which we now review following \cite[Section 3.6]{PT}. 

While stable pairs are naturally a disconnected theory, in the sense that the support of a stable pair might be disconnected, one can formally define ``connected $\PT$ invariants'' by taking a logarithm of the partition function. For primary insertions $D$ we define $Z_{\beta}^{\PT, \circ}(q|D)$ as the coefficients of

\[\sum_{\beta> 0}Q^\beta \sum_{a_1, \ldots, a_n\in A_{>2}}\frac{t_{a_1}\ldots t_{a_n}}{n!}Z_{\beta}^{\PT, \circ}\big(q| \ch_2(\gamma_{a_1}\big)\ldots\ch_2(\gamma_{a_n})\big)=\log(Z_{\textup{prim}}^{\PT}(Q, q, {\bf t}))\,.\]
Note that $Z_{\beta}^{\PT, \circ}(q|D)$ is a polynomial in $Z_{\beta'}^{\PT}(q|D')$ for $\beta'\leq \beta$ and $D'=D_I$ for some $I\subseteq [n]$, and vice versa, so they carry the same information. In particular, Theorem \ref{thm: main} also implies that $Z_{\beta}^{\PT, \circ}(q|D)$ is also a rational function symmetric under $q\leftrightarrow q^{-1}$. The $\GV/\PT$ correspondence states that there exist integer numbers $n_{g,\beta}(\gamma_1,\ldots, \gamma_n)$ for $0\leq g\leq N_\beta$, where $N_\beta$ is some integer depending only on $\beta$, such that
\begin{equation}\label{eq: PT/GV}Z_{\beta}^{\PT, \circ}\big(q|\ch_2(\gamma_1)\ldots\ch_n(\gamma_n)\big)=\sum_{g=0}^{N_\beta} n_{g,\beta}(\gamma_1,\ldots,\gamma_n)(q^{1/2}+q^{-1/2})^{2g-2+d_\beta}\,.\end{equation}
When $X$ is Calabi--Yau a similar formula holds, with $d_\beta=0$, in which case the poles come from the $g=0$ GV invariant. In the Fano case one needs to distinguish between $d_\beta=1$, in which case we may still have a pole at $q=-1$ coming from genus 0 contributions, or $d_\beta>1$, in which case there are no poles other than $q=0$. We give now a new proof of this statement by proving Theorem \ref{thm: primarystrongrationality}.\footnote{Note that Theorem \ref{thm: primarystrongrationality} is weaker than the GV prediction. When $d_\beta>2$, \eqref{eq: PT/GV} implies that $Z_{\beta}^{\PT, \circ}\big(q|D)$ is divisible by $(q^{1/2}+q^{-1/2})^{d_\beta-2}$, which we do not know how to prove with our techniques.}

\begin{proof}[Proof of Theorem \ref{thm: primarystrongrationality}]
Taking a logarithm of Theorem \ref{thm: primarywc} gives 
\[Z_{\beta}^{\PT, \circ}(q|D)=\frac{1}{q^{1/2}+q^{-1/2}}\int_{\sM_\beta} D+Z_{\beta}^{\sL, \circ}(q|D)\]
where $Z_{\beta}^{\sL, \circ}$ is defined in the same way as $Z_{\beta}^{\PT, \circ}$. Since $Z_{\beta}^{\sL, \circ}(q| D)$ is a polynomial in $Z_{\beta'}^{\sL}(q| D')$ and the latter are Laurent polynomials in $q$ we conclude the proof.\qedhere 
\end{proof}

The following is an immediate consequence:

\begin{corollary}\label{cor: laurentpoly}
Let $X, \beta$ be as in the statement of Theorem \ref{thm: main} and let $D$ be a primary insertion. Then $Z_{\beta}^{\PT}(q|D)$ has poles only at $q=-1$ (and $q=0$). If $d_{\beta'}>1$ for every $0<\beta'\leq \beta$ then $Z_{\beta}^{\PT}(q|D)$ is a Laurent polynomial.
\end{corollary}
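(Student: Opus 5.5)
The plan is to exponentiate the connected series and track the poles through that exponentiation. By definition of the connected invariants,
\[\sum_{\beta\ge 0}Q^\beta\sum_{a_1,\ldots,a_n}\frac{t_{a_1}\cdots t_{a_n}}{n!}Z^{\PT}_\beta\big(q|\ch_2(\gamma_{a_1})\cdots\ch_2(\gamma_{a_n})\big)=\exp\Big(\sum_{\beta>0}Q^\beta\sum_{a_1,\ldots,a_n}\frac{t_{a_1}\cdots t_{a_n}}{n!}Z^{\PT,\circ}_\beta(q|\cdots)\Big),\]
up to the $\beta=0$ term. So $Z^{\PT}_\beta(q|D)$ is a finite polynomial expression in the series $Z^{\PT,\circ}_{\beta'}(q|D_I)$ with $0<\beta'\le\beta$ and $I\subseteq[n]$. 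This expression involves only finitely many decompositions $\beta=\beta'_1+\cdots+\beta'_r$ and finitely many partitions of $[n]$.

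There is one point to check here: the $\beta=0$ part of $Z^{\PT}_{\textup{prim}}$. We have $P_{0,m}$ empty for $m\neq0$ and $P_{0,0}$ a point, and primary insertions on it vanish for degree reasons. Hence the $Q^0$ coefficient is $1$, and the logarithm is well defined.

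Next apply Theorem \ref{thm: primarystrongrationality} to each factor $\beta'\le\beta$; the pair $(X,\beta')$ still satisfies the hypotheses of Theorem \ref{thm: main}, since the positivity condition is inherited by smaller classes.
\begin{enumerate}
\item If $d_{\beta'}>1$, the factor $Z^{\PT,\circ}_{\beta'}(q|D_I)$ is a Laurent polynomial in $q$ (possibly times $q^{1/2}$).
\item If $d_{\beta'}=1$, the factor is a Laurent polynomial plus a multiple of $(q^{1/2}+q^{-1/2})^{-1}=q^{1/2}/(1+q)$.
\end{enumerate}
Sums and products of such expressions can only have poles at $q=0$ and $q=-1$. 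This gives the first claim.

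For the second claim, suppose $d_{\beta'}>1$ for every $0<\beta'\le\beta$. Then every summand $\beta'_j$ appearing in the decompositions satisfies $\beta'_j\le\beta$, so $d_{\beta'_j}>1$ and every factor is a Laurent polynomial. Hence so is $Z^{\PT}_\beta(q|D)$.

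Alternatively, one can read this off directly from Theorem \ref{thm: primarywc} using the $Q^\beta$ coefficient as in Remark \ref{rmk: positiveclassesprimary}. Under the hypothesis, no classes with $d=1$ lie below $\beta$, so the $\exp$ factor contributes trivially and $Z^{\PT}_\beta$ equals $Z^{\sL}_\beta$. This is a Laurent polynomial by Theorem \ref{thm: propertiesLspaces}.3 and Theorem \ref{thm: Linvariants}.

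No step is a real obstacle. The only care needed is the bookkeeping of the half-integer powers $q^{d_\beta/2}$, which do not affect pole locations away from $0$.
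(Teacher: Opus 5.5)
Your proposal is correct and is essentially the paper's argument. The paper records this corollary as an immediate consequence of Theorem \ref{thm: primarystrongrationality}: you exponentiate the connected series, each of which is a Laurent polynomial, plus a multiple of $(q^{1/2}+q^{-1/2})^{-1}$ when $d_{\beta'}=1$. That is equivalent to reading off the $Q^\beta$ coefficient of Theorem \ref{thm: primarywc} as in your alternative, and your bookkeeping checks (that the $Q^0$ coefficient is $1$, and that the positivity hypothesis passes to $\beta'\le\beta$) are the right ones.
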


\begin{remark}
The Laurent polynomial phenomena can be seen for example in the calculations of stable pair invariants for $\BP^3$ (cf. \cite[Appendix A]{MOOP}) or the cubic 3-fold (cf. \cite[Theorem 21]{moreira}). More generally, a small modification of Lemma \ref{lem: bracketprimary} shows the following. Let $K=\min_{\beta \textup{ effective}}d_\beta$ (for example $K=4$ if $X=\BP^3$), and consider a non-primary descendent $D=\prod_{i=1}^n \ch_{2+k_i}(\gamma_i)$ such that $\sum_{i=1}^{n}k_i\leq K-2$. Then $Z_{\beta}^{\PT}(q|D)$ agrees again with $Z_{\beta}^{\sL}(q|D)$, and hence it is a Laurent polynomial. 
\end{remark}

\begin{remark}
    Maulik and Ranganathan show in \cite[Theorem B]{MRgwpttoric} that if $X$ is a toric 3-fold such that every toric divisor is nef then the generating series $Z_{\beta}^\PT(q|D)$ are Laurent polynomials for any primary $D$ and any $\beta$. Corollary \ref{cor: laurentpoly} reproves their result (in the empty boundary case) since $d_\beta \geq 2$ for any effective curve class $\beta$. The last statement follows from the fact that $-K_X$ is the sum of the toric divisors corresponding to faces of the moment polytope of $X$ and that the cone of effective curve classes is generated by toric curves corresponding to edges of the polytope \cite[Proposition 1.6]{reid}.
\end{remark}

\end{document}